\theoremstyle{plain}
\newtheorem{theorem}{Theorem}[section]
\newtheorem{lema}[theorem]{Lemma}
\newtheorem{corolario}[theorem]{Corollary}
\newtheorem{proposition}[theorem]{Proposition}
\theoremstyle{definition}
\newtheorem{definition}[theorem]{Definition}
\newtheorem{example}[theorem]{Example}
\theoremstyle{remark}
\newtheorem{obs}[theorem]{Remark}
\newtheorem*{theorem*}{Theorem}
\theoremstyle{plain}
\newcommand{\id}{\operatorname{id}}
\DeclareMathOperator{\Supp}{Supp}
\title[Prime groupoid graded rings]{Prime groupoid graded rings with applications to partial skew groupoid rings}
\author{Paula S. E. Moreira}
\address{Departamento de Matem\'atica, Universidade Federal de Santa Catarina, 88040-900 \\	Florian\'opolis, Brasil.}
\email{savanamatematica@gmail.com}
\thanks{The first named author was financed in part by the Coordena\c{c}\~{a}o de Aperfei\c{c}oamento de Pessoal de N\'{i}vel Superior - Brasil (CAPES) - Finance Code 001.
The authors are grateful to Patrik Lundstr\"{o}m for making them aware of Munn's result in \cite{Munn2000}.}
\author{Johan \"{O}inert}
\address{Department of Mathematics and Natural Sciences, Blekinge Institute of Technology, SE-37179 Karlskrona, Sweden.}
\email{johan.oinert@bth.se}
\subjclass[2020]{16W50, 16N60, 16S35, 16S99, 18B40}
\keywords{groupoid graded ring, nearly epsilon-strongly groupoid graded ring, prime ring, partial skew groupoid ring, group-type partial action}
\begin{document}
\maketitle

\begin{abstract}
In this paper, we investigate primeness of groupoid graded rings.
We provide a set of necessary and sufficient conditions for primeness of a nearly-epsilon strongly groupoid graded ring.
Furthermore, we apply our main result to get a characterization of prime partial skew groupoid rings, and in particular of prime groupoid rings, thereby generalizing a classical result by Connell and partially generalizing recent results by Steinberg.
\end{abstract}

\section{Introduction}

Throughout this paper, all rings are assumed to be associative but not necessarily unital.
Recall that a ring $S$ is said to be \emph{prime} if there are no nonzero ideals $I, J$ of $S$ such that $IJ=\{0\}.$

In 1963, Connell \cite[Thm.~8]{Connell1963} gave a characterization of prime group rings.
Indeed, given a unital ring $A$ and a group $G$, the corresponding group ring $A[G]$ is prime if, and only if, $A$ is prime and $G$ has no non-trivial finite normal subgroup.

Given a group $G$,  recall that a ring $S$ is said to be \emph{$G$-graded} if there is a collection $\{S_g\}_{g\in G}$ of additive subgroups of $S$ such that $S=\oplus_{g\in G}S_g$, and $S_g S_h \subseteq S_{gh}$, for all $g,h\in G$.
If, in addition, $S_g S_h = S_{gh}$, for all $g,h\in G$, then $S$ is said to be \emph{strongly $G$-graded}.
The class of unital strongly $G$-graded rings includes for instance all group rings, all twisted group rings, and all $G$-crossed products.

In 1984, Passman \cite[Thm.~1.3]{Passman1984} generalized Connell's result by giving a characterization of prime unital strongly group graded rings.
In recent years, various generalizations of strongly group graded rings have appeared in the literature.
In \cite{Nystedt2020}, so-called
\emph{nearly epsilon-strongly group graded rings} were introduced. That class of rings contains for instance all unital strongly group graded rings, all epsilon-strongly group graded rings, all Leavitt path algebras and all unital partial crossed products (see \cite{Nystedt2020,Pinedo2018}).
In \cite{Lannstrom2021}, a characterization of prime nearly epsilon-strongly group graded rings was established, thereby generalizing Passman's result to a non-unital and non-strong setting.

In this paper we turn our focus to rings graded by groupoids.
Let $G$ be a groupoid. Recall that a ring $S$ is said to be \emph{$G$-graded} if there is a collection $\{S_g\}_{g\in G}$ of additive subgroups  of $S$ such that $S = \oplus_{g\in G} S_g$, and $S_g S_h \subseteq S_{gh}$ whenever $g,h \in G$ are composable, and $S_g S_g = \{0\}$ otherwise.
Natural examples of groupoid graded rings that are not necessarily group graded, are for instance groupoid rings, groupoid crossed products, and partial skew groupoid rings (see e.g. \cite{Bagio2010, Moreira2022, Bagio2021, Bagio2012, Bagio2022,OL2010}).

Suppose that $G$ is a groupoid and that $S$ is a $G$-graded ring.
Following \cite{Lannstrom2022},
we shall say that $S$ is \emph{nearly epsilon-strongly $G$-graded} if, for each $g\in G$,  $S_g S_{g^{-1}}$ is an $s$-unital ring and $S_g S_{g^{-1}} S_g = S_g$.
Our main result provides a characterization of prime nearly epsilon-strongly groupoid graded rings.

\begin{theorem}\label{P2-teo-nesgr} 
Let $G$ be a groupoid, let $S$ be a nearly epsilon-strongly $G$-graded ring,
and let $G':=\{g \in G: S_{s(g)}\neq\{0\} \text{ and } S_{r(g)}\neq\{0\}\}$. 
The following statements are equivalent:
\begin{enumerate}[{\rm (i)}]
    \item $S$ is prime;
    \item	$\oplus_{e \in G_0}S_e$ is $G$-prime, and for every $e \in G_0',$ $\oplus_{g \in G_e^e}S_g$ is prime;
    \item	$\oplus_{e \in G_0}S_e$ is $G$-prime, and for some $e \in G_0',$ $\oplus_{g \in G_e^e}S_g$ is prime;
    \item	$S$ is graded prime, and for every $e \in G_0',$ $\oplus_{g \in G_e^e}S_g$ is prime;
    \item	$S$ is graded prime, and for some $e \in G_0',$  $\oplus_{g \in G_e^e}S_g$ is prime;
    \item	For every $e \in G_0',$ $e$ is a support-hub, and  $\oplus_{g \in G_e^e}S_g$ is prime; 
    \item	For some $e \in G_0',$ $e$ is a support-hub, and  $\oplus_{g \in G_e^e}S_g$ is prime. 
\end{enumerate}
\end{theorem}
\noindent Here $G_e^e$ denotes the isotropy group of an element $e \in G_0'.$ For more details about the statements in the above theorem, see e.g. Definitions~\ref{P2-def-prime-nearly} and \ref{P2:def-support-hub}.

We point out that our main result reduces the primeness investigation for a groupoid graded ring to the group case. Indeed, $\oplus_{g\in G_e^e} S_g$ is a nearly epsilon-strongly group graded ring (cf.~\cite{Nystedt2020}). Hence, the main result of \cite{Lannstrom2021} can be used to decide whether it is prime.

Here is an outline of this paper.
In Section~\ref{P2-Preliminaries}, based on \cite{Nystedt2019}, \cite{Oinert2019} and \cite{Tominaga1976}, we recall some basic definitions and properties about groupoids, groupoid graded rings and $s$-unital rings that will be used throughout the paper.
In Section~\ref{P2-primeness}, we record some basic properties of nearly epsilon-strongly groupoid graded rings. Inspired by \cite{Lannstrom2021}, for such a ring $S$, we establish a relationship between the $G$-invariant ideals of $\oplus_{e \in G_0}S_e$ and the $G$-graded ideals of $S$ (see Theorem~\ref{P2-thm:InvGradedIdealCorresp}). Moreover, we provide necessary conditions for graded primeness of $S$ (see Section~\ref{graded-prime}) and establish our main result which is a characterization of prime nearly epsilon-strongly groupoid graded rings (see Theorem~\ref{P2-teo-nesgr}).
Finally, in Section~\ref{P2-Applications}, we apply our results to partial skew groupoid rings, skew groupoid rings and groupoid rings.  
Indeed, 
we give a characterization of prime partial skew groupoid rings associated with groupoid partial actions of group-type \cite{Bagio2021,Bagio2022} (see Theorem~\ref{P2-prop:g-type-prime}).
Using that every global action of a connected groupoid is of group-type, we get a characterization of prime skew groupoid rings of connected groupoids (see Theorem~\ref{P2:teo-global-connected}).  
Furthermore, we establish a generalization of Connell's classical result, by providing a characterization of prime groupoid rings (see Theorem~\ref{P2-teogr}).

\section{Preliminaries}\label{P2-Preliminaries}

In this section, we recall some  notions and basic notation regarding groupoids and graded rings.

\subsection{Groupoids}

By a \emph{groupoid} we shall mean a small category $G$ in which every morphism is invertible.
Each object of $G$ will be identified with its corresponding identity morphism, allowing us to view $G_0$, the set of objects of $G$, as a subset of the set of morphisms of $G$.
The set of morphisms of $G$ will simply be denoted by $G$. This means that
$G_0:=\{gg^{-1} : g \in G\} \subseteq G$.

The \emph{range} and \emph{source} maps
$r,s : G \to G_0$,
indicate the range (codomain)
respectively
source (domain) of each morphism of $G$.
By abuse of notation, the set of \emph{composable pairs} of $G$
is denoted by 
$G^2 := \{(g,h) \in G \times G : s(g)=r(h)\}$.
For each $e \in G_0,$ we denote the corresponding \emph{isotropy group} by $G_e^e:=\{g \in G: s(g)=r(g)=e\}.$ 

\begin{definition}
Let $G$ be a groupoid.
\begin{enumerate}[{\rm (a)}]
    \item $G$ is said to be \emph{connected}, if for all $e,f\in G_0$, there exists $g\in G$ such that $s(g)=e$ and $r(g)=f$.
    \item A nonempty subset $H$ of $G$ is said to be a \emph{subgroupoid of $G$}, if
    $H^{-1} \subseteq H$
    and
    $gh\in H$ whenever $g,h\in H$ and $(g,h)\in G^2$.
\end{enumerate}
\end{definition}

\subsection{Groupoid graded rings}

\begin{definition}
Let $G$ be a groupoid.
A ring $S$ is said to be \emph{$G$-graded} (or \emph{graded by $G$})
if there is a collection $\{S_g\}_{g\in G}$ of additive subgroups of $S$  
such that
$S=\oplus_{g\in G} S_g$,
and
$S_g S_h \subseteq S_{gh}$, if $(g,h) \in G^2$, and $S_g S_h = \{0\}$, otherwise.
\end{definition}

\begin{obs}\label{P2-rem:GradedRings}
Suppose that $G$ is a groupoid and that $S$ is a $G$-graded ring.

(a)
If $H$ is a subgroupoid of $G$, then $S_H:=\oplus_{h\in H} S_h$ is an $H$-graded subring of $S$.
In particular, note that $\oplus_{e\in G_0} S_e$ and $\oplus_{g\in G_e^e} S_g$, for every $e\in G_0$, are subrings of $S$.

(b)
For any element $c = \sum_{g\in G} c_g \in S$, with $c_g \in S_g$, we define
$\Supp(c):= \{g\in G : c_g \neq 0\}$.

(c) An ideal $I$ of $S$ is said to be a \emph{graded ideal} (or \emph{$G$-graded ideal}) if
$I=\oplus_{g \in G} (I \cap S_g)$.
\end{obs}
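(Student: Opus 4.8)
The statement has three parts, of which (b) and (c) are purely notational: part (b) merely fixes the symbol $\Supp(c)$ for the set of indices on which the homogeneous decomposition of $c$ is nonzero, and part (c) records the definition of a graded ideal. Neither requires an argument, so the only substantive claim is (a), namely that for a subgroupoid $H$ of $G$ the subset $S_H := \oplus_{h\in H} S_h$ is an $H$-graded subring of $S$. The plan is to verify, in turn, that $S_H$ is an additive subgroup, that it is closed under multiplication, and that the family $\{S_h\}_{h\in H}$ satisfies the two defining conditions of an $H$-grading.

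First I would note that $S_H$ is an additive subgroup of $S$ for free: since $S=\oplus_{g\in G}S_g$ is an internal direct sum of the additive subgroups $S_g$, the partial sum $\oplus_{h\in H}S_h$ is again an additive subgroup, and the sum over $H$ remains direct. Next, to see that $S_H$ is closed under multiplication, I would take homogeneous components $a_h\in S_h$ and $b_k\in S_k$ with $h,k\in H$ and examine the product $a_h b_k$ in two cases. If $(h,k)\in G^2$, then $a_h b_k\in S_h S_k\subseteq S_{hk}$; since $H$ is a subgroupoid and $h,k\in H$ are composable, $hk\in H$, so $S_{hk}$ is one of the summands of $S_H$. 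If $(h,k)\notin G^2$, then by the definition of a $G$-grading $S_h S_k=\{0\}$, whence $a_h b_k=0\in S_H$. Extending bilinearly over the finitely many nonzero components of two arbitrary elements of $S_H$ then gives $S_H\cdot S_H\subseteq S_H$.

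Finally, for the grading itself, I would take $\{S_h\}_{h\in H}$ as the homogeneous decomposition; the equality $S_H=\oplus_{h\in H}S_h$ holds by definition. The key point to check is that the composability relation and the range/source maps of $H$ are simply the restrictions of those of $G$: because $H$ is closed under inversion and composition, one has $s(h)=h^{-1}h$ and $r(h)=hh^{-1}$ computed identically in $H$ and in $G$, so $(h,k)\in H^2$ if and only if $(h,k)\in G^2$ for $h,k\in H$. Consequently $S_h S_k\subseteq S_{hk}$ whenever $(h,k)\in H^2$ (with $hk\in H$ by closure), and $S_h S_k=\{0\}$ otherwise, which are exactly the two grading axioms for the $H$-graded ring $S_H$.

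There is no real obstacle here; the only point demanding a moment's care is the last one, namely confirming that the groupoid structure of $H$ — in particular its notion of composable pairs and its identity morphisms $H_0=\{hh^{-1}:h\in H\}\subseteq G_0$ — is inherited unchanged from $G$, so that the vanishing-off-composable-pairs axiom transfers verbatim from $G$ to $H$.
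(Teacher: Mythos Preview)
Your argument is correct. Note, however, that in the paper this statement is a Remark and carries no proof at all: parts (b) and (c) are definitions, and part (a) is asserted as an immediate observation. Your write-up simply supplies the routine verification the paper leaves implicit, and the one subtlety you flag --- that composability in $H$ agrees with composability in $G$ because $s$ and $r$ are inherited --- is exactly the point worth checking.
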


The next lemma generalizes \cite[Lem.~2.4]{Oinert2019}.
For the convenience of the reader, we include a proof.

\begin{lema}\label{P2-lemmapi}
Let $G$ be a groupoid and let $S$ be a $G$-graded ring.
Suppose that $H$ is a subgroupoid of $G$.
Define $\pi_H : S \to S_H$ by
$\pi_H \left( \sum_{g\in G} c_g \right) := \sum_{h\in H} c_h.$
The following assertions hold:
\begin{enumerate}[{\rm (i)}]
\item The map $\pi_H : S \to S_H$ is additive.
\item If $a\in S$ and $b\in S_H$, then $\pi_H(ab)=\pi_H(a)b$ and $\pi_H(ba)=b\pi_H(a)$. 
\end{enumerate}
\end{lema}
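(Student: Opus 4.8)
The plan is to establish both assertions by direct computation, exploiting the fact that $\pi_H$ is nothing but the coordinate projection onto the graded components indexed by $H$, and that $H$ being a subgroupoid guarantees the products land in the right place.

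For part (i), I would simply observe that writing $a = \sum_{g\in G} a_g$ and $b = \sum_{g\in G} b_g$ in their (unique) homogeneous decompositions, one has $a + b = \sum_{g\in G}(a_g + b_g)$ with $a_g + b_g \in S_g$; applying the definition of $\pi_H$ then gives $\pi_H(a+b) = \sum_{h\in H}(a_h + b_h) = \sum_{h\in H} a_h + \sum_{h\in H} b_h = \pi_H(a) + \pi_H(b)$. This is routine and uses only the directness of the sum $S = \oplus_{g\in G} S_g$.

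For part (ii), by additivity (part (i)) and distributivity it suffices to check the identities on homogeneous elements, so I would take $a \in S_g$ for a single $g \in G$ and $b \in S_h$ for a single $h \in H$. Then $ab \in S_{gh}$ if $(g,h)\in G^2$, and $ab = 0$ otherwise. I want to show $\pi_H(ab) = \pi_H(a)b$. Here there are two cases. If $g \in H$, then $\pi_H(a) = a$; and since $H$ is a subgroupoid and $h \in H$, whenever $(g,h)\in G^2$ we get $gh \in H$, so $ab \in S_{gh} \subseteq S_H$, whence $\pi_H(ab) = ab = \pi_H(a)b$ (and if $(g,h)\notin G^2$ both sides are $0$). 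If $g \notin H$, then $\pi_H(a) = 0$, so the right-hand side is $0$; I must argue the left-hand side is $0$ too, i.e. that $ab$ has no component in $S_H$. If $(g,h)\notin G^2$ then $ab = 0$ and we are done; if $(g,h)\in G^2$ then $ab \in S_{gh}$, and it remains to see $gh \notin H$ — for if $gh \in H$ then $g = (gh)h^{-1} \in H$ (using $h^{-1}\in H$ and composability, since $s(gh) = s(h) = r(h^{-1})$), contradicting $g\notin H$. Hence $\pi_H(ab) = 0 = \pi_H(a)b$. The identity $\pi_H(ba) = b\pi_H(a)$ is proved by the symmetric argument, using $(h,g)\in G^2$ and $hg \in H \Rightarrow g = h^{-1}(hg) \in H$.

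The only mildly delicate point — and the place I would be most careful — is the case analysis in part (ii) when the homogeneous degree $g$ of $a$ does not lie in $H$: one must confirm that multiplying by a homogeneous element of $S_H$ cannot create a component supported in $H$, which is exactly where the subgroupoid closure under inverses and composition is used. Everything else is a bookkeeping exercise with the direct sum decomposition, so I do not expect any real obstacle.
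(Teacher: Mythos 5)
Your proof is correct and follows essentially the same route as the paper: the paper splits $a$ as $a'+\pi_H(a)$ with $\Supp(a')\subseteq G\setminus H$, while you reduce to homogeneous components, but both arguments hinge on the identical key fact that for $g\notin H$ and $h\in H$ the product $gh$ (when defined) lies outside $H$. In fact you spell out the justification of that fact ($gh\in H$ would force $g=(gh)h^{-1}\in H$ by closure under inverses and composition), which the paper merely asserts.
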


\begin{proof}
(i) This is clear.

(ii)
Take $a\in S$ and $b\in S_H$.
Put $a':=a-\pi_H(a)$.
Clearly, $a = a' + \pi_H(a)$ and $\Supp(a')\subseteq G \setminus H$.
If $g\in G \setminus H$ and $h\in H$, then either the composition $gh$ does not exist or it belongs to $G \setminus H$. 
Thus, $\Supp(a'b) \subseteq \emptyset \cup G \setminus H$.
Hence,
$\pi_H(ab)=\pi_H((a'+\pi_H(a))b) = \pi_H(a'b) + \pi_H(\pi_H(a)b) = 0 + \pi_H(a)b.$ 
Analogously, one may show that 
$\pi_H(ba)=b\pi_H(a)$.
\end{proof}

\subsection{$s$-unital rings}

We briefly recall the definitions  of $s$-unital modules and rings
as well as some key properties.

\begin{definition}[{\cite[cf.~Def.~4]{Nystedt2019}}]
Let $R$ be an associative ring and let $M$ be a left (resp. right) $R$-module. We say that $M$ is \emph{$s$-unital} if 
$m \in R m$ (resp. $m \in m R$) 
for every $m \in M$. If $M$ is an $R$-bimodule, then we say that $M$ is \emph{$s$-unital} if it is $s$-unital both as a
left $R$-module and as a right $R$-module. The ring $R$ is said to be \emph{left $s$-unital} (resp. \emph{right $s$-unital}) if it is left (resp. right) $s$-unital as a left (resp. right) module over itself. The
ring $R$ is said to be \emph{$s$-unital} if it is $s$-unital as a bimodule over itself.
\end{definition} 

The following results are due to Tominaga \cite{Tominaga1976}.
For the proofs, we refer the reader to 
\cite[Prop.~2.8, Prop.~2.10]{Nystedt2019}.

\begin{proposition}
\label{P2:prop-s-unital}
Let $R$ be a ring and let $M$ be a left (resp. right) $R$-module. Then $M$ is left
(resp. right) $s$-unital if, and only if, for all $n \in \mathbb{Z}^+$ and all $m_1, \ldots ,m_n \in M$ there
is some $a \in R$ such that 
$a m_i = m_i$ (resp. $m_i a = m_i$)
for every $i \in \{1, \ldots, n\}$. 
\end{proposition}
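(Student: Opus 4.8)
The plan is to prove Proposition~\ref{P2:prop-s-unital} — that a left (resp.\ right) $R$-module $M$ is $s$-unital if and only if every \emph{finite} subset of $M$ admits a common left (resp.\ right) local unit in $R$ — by a two-sided implication. I will treat only the left-module case, since the right-module case is entirely symmetric. The nontrivial direction is $(\Rightarrow)$: assuming $m\in Rm$ for each single $m\in M$, I must upgrade this to a single element $a\in R$ acting as the identity simultaneously on $m_1,\dots,m_n$. The reverse direction $(\Leftarrow)$ is immediate: taking $n=1$ gives exactly the defining condition $m\in Rm$ for each $m$.

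For the forward direction, I would argue by induction on $n$. The base case $n=1$ is the hypothesis. For the inductive step, the key idea is a standard ``partition of unity''-style trick. Suppose we have already produced $b\in R$ with $b m_i = m_i$ for $i=1,\dots,n-1$, and by hypothesis pick $c\in R$ with $c m_n = m_n$. Now consider the element
\[
a := b + c - cb \in R.
\]
One checks directly that $a m_i = b m_i + c m_i - c(b m_i) = m_i + c m_i - c m_i = m_i$ for $i < n$, using $b m_i = m_i$; and $a m_n = b m_n + c m_n - c b m_n = b m_n + m_n - c b m_n$. To make this equal $m_n$ I need to kill $b m_n - c b m_n = (1-c)(b m_n)$, which is not automatic. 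The cleaner route is to first reduce to the two-element case $n=2$ and then iterate carefully: given $m_1, m_2$, choose $b$ with $b m_1 = m_1$, set $m_2' := m_2 - b m_2$, choose $c$ with $c m_2' = m_2'$, and put $a := b + c - cb$. Then $a m_1 = m_1$ as above (since $b m_1 = m_1$ forces $c b m_1 = c m_1$, wait — rather $a m_1 = m_1 + c m_1 - c m_1 = m_1$), and $a m_2 = b m_2 + c m_2 - c b m_2 = b m_2 + c(m_2 - b m_2) = b m_2 + c m_2' = b m_2 + m_2' = b m_2 + m_2 - b m_2 = m_2$. This handles $n=2$, and then an induction on $n$ packaging $m_1,\dots,m_{n-1}$ together (using the inductive hypothesis to get a common local unit $b$ for the first $n-1$ elements, then combining with $m_n$ exactly as in the $n=2$ case applied to the pair ``$b$ acting on everything'' and $m_n$) completes the proof.

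The main obstacle — really the only subtle point — is getting the algebra of the combined element $a = b + c - cb$ exactly right so that it fixes \emph{all} of $m_1,\dots,m_{n-1}$ and $m_n$ at once; the trick is that replacing $m_n$ by its ``error term'' $m_2' = m_2 - b m_2$ before applying the single-element hypothesis is what makes the two contributions compatible. Everything else is bookkeeping. I would also remark that the element $a$ produced this way lies in $R$ (no unit of $R$ is used, only the formal expression $b+c-cb$), so the argument is valid for non-unital $R$, which matters since the whole paper works with rings that need not be unital. Finally, I would note that Proposition~\ref{P2:prop-s-unital} is attributed to Tominaga~\cite{Tominaga1976} and that a detailed exposition already appears in \cite[Proposition~2.8, Proposition~2.10]{Nystedt2019}, so in the paper itself this would simply be cited rather than reproven; the sketch above is how one would reconstruct it if needed.
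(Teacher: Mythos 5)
Your argument is correct: the error-term trick (replacing $m_n$ by $m_n - b m_n$, choosing $c$ with $c(m_n - b m_n) = m_n - b m_n$, and taking $a = b + c - cb$), combined with induction on $n$, is precisely Tominaga's classical proof, which is the argument given in \cite[Proposition~2.8]{Nystedt2019} — the reference to which the paper defers, since it states this proposition without proof. Thus your reconstruction coincides with the intended (cited) argument, and your closing observation that the paper would simply cite Tominaga/Nystedt is exactly what the authors do.
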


\begin{proposition}\label{P2:prop-s-unitalBiModule}
Let $R$ be a ring and let $M$ be an $R$-bimodule. Then $M$ is $s$-unital if, and only if, for all $n \in \mathbb{Z}^+$ and all $m_1, \ldots ,m_n \in M$ there
is some $a \in R$ such that 
$a m_i =m_i a = m_i$
for every $i \in \{1, \ldots, n\}$. 
\end{proposition}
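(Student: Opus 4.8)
The statement to be proved is Proposition~\ref{P2:prop-s-unitalBiModule}: an $R$-bimodule $M$ is $s$-unital if and only if, for all $n\in\mathbb{N}$ and all $m_1,\dots,m_n\in M$, there exists $a\in R$ with $a m_i = m_i a = m_i$ for every $i$. The ``if'' direction is immediate (take $n=1$ and specialize the single element), so the real content is the ``only if'' direction. The plan is to bootstrap from the one-sided statement, Proposition~\ref{P2:prop-s-unital}, which we may assume, and then perform a two-step amplification: first from ``one element, two-sided'' to ``$n$ elements, one-sided'' (already available), and then combine the left and right witnesses into a single two-sided witness via a standard trick.

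First I would record the key idea for producing a \emph{single} element that acts as a two-sided identity on a finite set. Given $m_1,\dots,m_n\in M$, apply Proposition~\ref{P2:prop-s-unital} on the right to obtain $u\in R$ with $m_i u = m_i$ for all $i$. Now consider the finite list consisting of $m_1,\dots,m_n$ together with $m_1 u, m_2 u,\dots$ — actually, more efficiently, apply Proposition~\ref{P2:prop-s-unital} on the left to the enlarged list $\{m_1,\dots,m_n, u m_1 - m_1,\dots\}$ — hmm, the cleanest route is: choose $u$ with $m_i u = m_i$ for all $i$, then choose $v$ with $v m_i = m_i$ and $v u = u$ (applying the left $s$-unital property of Proposition~\ref{P2:prop-s-unital} to the finite list $m_1,\dots,m_n,u$). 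Then set $a := u + v - vu$. One checks directly that $a m_i = u m_i + v m_i - v u m_i = u m_i + m_i - u m_i = m_i$ and $m_i a = m_i u + m_i v - m_i v u = m_i + m_i v - m_i v u = m_i + m_i v(1 - u)$; here I must be careful because $R$ need not be unital, so I would instead compute $m_i a = m_i u + m_i v - m_i (vu) = m_i + m_i v - m_i v u$, and since $m_i = m_i u$ we get $m_i v u = m_i v$, hence $m_i a = m_i$. Similarly $a m_i = u m_i + v m_i - (vu) m_i = u m_i + m_i - v(u m_i)$; applying $v u = u$ is the wrong direction here, so I would instead arrange the witnesses symmetrically: pick $v$ first with $v m_i = m_i$, then $u$ with $m_i u = m_i$ and $u v = v$ — no. The genuinely symmetric fix is $a := u + v - u v$ with $v$ chosen for the right side ($m_i v = m_i$) \emph{and} $u$ chosen after, for the left side, satisfying both $u m_i = m_i$ and $u v = v$; then $a m_i = u m_i + v m_i - u v m_i = m_i + v m_i - v m_i = m_i$ and $m_i a = m_i u + m_i v - m_i u v = m_i u + m_i - m_i u = m_i$. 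So the correct order is: \emph{right} witness $v$ first, then \emph{left} witness $u$ absorbing both the $m_i$ and $v$, and $a = u + v - uv$.

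So the steps, in order, are: (1) reduce to the ``only if'' direction; (2) given $m_1,\dots,m_n$, invoke Proposition~\ref{P2:prop-s-unital} (right version) on the list $m_1,\dots,m_n$ to get $v\in R$ with $m_i v = m_i$ for all $i$; (3) invoke Proposition~\ref{P2:prop-s-unital} (left version) on the enlarged finite list $m_1,\dots,m_n,v$ to get $u\in R$ with $u m_i = m_i$ for all $i$ and $u v = v$; (4) set $a := u + v - uv \in R$ and verify $a m_i = m_i$ and $m_i a = m_i$ by the short computations above. The main obstacle — really the only subtlety — is the non-unitality of $R$: one cannot write $(1-u)$ or otherwise invoke a multiplicative identity, so the combining element $a = u+v-uv$ and the order in which the witnesses are extracted must be chosen precisely so that every cancellation uses only the already-established identities $m_i v = m_i$, $u m_i = m_i$, and $uv = v$. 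Once that bookkeeping is pinned down the verification in step~(4) is a two-line calculation, and the proof is complete.
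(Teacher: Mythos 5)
Your reduction to the ``only if'' direction and the appeal to Proposition~\ref{P2:prop-s-unital} are fine, but the combining step contains a genuine error, in two places. First, step~(3) is not available: $v$ is an element of $R$, not of $M$, so the left $s$-unitality of the bimodule $M$ (via Proposition~\ref{P2:prop-s-unital}) cannot be applied to the ``enlarged list $m_1,\dots,m_n,v$''; the condition $uv=v$ would require $v\in Rv$, i.e.\ essentially that $R$ itself is left $s$-unital, which is not among the hypotheses ($R$ is an arbitrary ring). Second, even granting $uv=v$, your element collapses to $a=u+v-uv=u$, and the displayed verification of $m_ia=m_i$ is incorrect: you replace $m_iuv$ by $m_iu$, but nothing makes $v$ a right unit for $m_iu$; what the assumed identities actually give is $m_i(uv)=m_iv=m_i$, whence $m_ia=m_iu$, which need not equal $m_i$.

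The only thing wrong is the order of the factors in the correction term, and once it is fixed no enlarged lists or extra conditions are needed. Choose $u\in R$ with $um_i=m_i$ for all $i$ (left version of Proposition~\ref{P2:prop-s-unital}) and $v\in R$ with $m_iv=m_i$ for all $i$ (right version), and set $a:=u+v-vu$, i.e.\ the product is (right unit)$\cdot$(left unit). Then $am_i=um_i+vm_i-v(um_i)=m_i+vm_i-vm_i=m_i$ and $m_ia=m_iu+m_iv-(m_iv)u=m_iu+m_i-m_iu=m_i$, so $a$ is a two-sided $s$-unit for $\{m_1,\dots,m_n\}$. This is the classical argument of Tominaga; note that the paper itself does not reprove the proposition but refers to \cite[Proposition~2.10]{Nystedt2019}, where exactly this computation is carried out.
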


\begin{obs}
The element $a$, in Proposition~\ref{P2:prop-s-unitalBiModule}, is commonly  referred to as an \emph{$s$-unit for the set $\{m_1,\ldots,m_n\}$}.
\end{obs}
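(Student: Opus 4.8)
The plan is to recognize that this final statement is not a proposition to be proved but a purely terminological remark: it assigns the name \emph{$s$-unit for the set $\{m_1,\ldots,m_n\}$} to the element $a$ whose existence is guaranteed by Proposition~\ref{P2:prop-s-unitalBiModule}. Consequently there is no logical content that requires verification, and a formal proof block is neither expected nor appropriate. I would instead read the remark as fixing convenient notation for later use, where this language presumably streamlines the statements and proofs concerning the $s$-unital subrings $S_g S_{g^{-1}}$ that appear in the definition of a nearly epsilon-strongly graded ring.

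If one nevertheless wishes to justify that the terminology is well-posed, the single point to record is that the object being named actually exists, and I would simply point back to the preceding proposition. Indeed, Proposition~\ref{P2:prop-s-unitalBiModule} asserts precisely that, for an $s$-unital bimodule $M$ and any finite subset $m_1,\ldots,m_n \in M$, there is some $a \in R$ with $a m_i = m_i a = m_i$ for every $i$; hence for each finite set there is at least one such $a$, so the name is never vacuous. I would also note that the name is consistent with the earlier definition of an $s$-unital module: in the case $n = 1$ an $s$-unit for $\{m\}$ is exactly an element $a$ realizing $m \in Rm \cap mR$ via $m = am = ma$, which recovers the defining condition of $s$-unitality.

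The hard part, such as it is, is therefore one of framing rather than mathematics. There is no inductive argument, no ideal-theoretic manipulation, and no appeal to the grading to be carried out, and any attempt to ``prove'' the remark by re-deriving the existence of $a$ would merely reproduce the proof of Proposition~\ref{P2:prop-s-unitalBiModule} already attributed to Tominaga~\cite{Tominaga1976} (see also \cite{Nystedt2019}). Accordingly, the correct treatment is to leave the remark without a proof, relying on that proposition for whatever existence claim the new terminology implicitly carries.
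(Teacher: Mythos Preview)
Your assessment is correct: this remark is purely terminological and the paper provides no proof for it, exactly as you anticipated. There is nothing to compare or critique; your recognition that no proof is required matches the paper's treatment.
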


\section{Groupoid graded rings}\label{P2-primeness}

Throughout this section, unless stated otherwise, $G$ denotes an arbitrary groupoid.

\subsection{Nearly epsilon-strongly groupoid graded rings}\label{P2-sec:BasicProp}

In this section, we will recall the notion of a nearly epsilon-strongly groupoid graded ring and record some of its basic properties.

\begin{definition}[{\cite[Def.~4.6]{Lannstrom2022}}]
Let $S$ be a $G$-graded ring.
We say that $S$ is
\emph{nearly epsilon-strongly $G$-graded} if, for each $g\in G$,  $S_g S_{g^{-1}}$ is an $s$-unital ring
and
$S_g S_{g^{-1}} S_g = S_g$.
\end{definition}

\begin{obs}
The above definition simultaneously generalizes \cite[Def.~3.3]{Nystedt2020} and \cite[Def.~34]{NOP2020}.
\end{obs}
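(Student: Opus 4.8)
The plan is to verify the two claimed generalizations one at a time; in each case it suffices to unravel the referenced definition and check that, for every $g\in G$, the ring $S_g S_{g^{-1}}$ is $s$-unital and that $S_g S_{g^{-1}} S_g = S_g$.

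First I would address \cite[Definition~3.3]{Nystedt2020}. A group $G$ is the same thing as a groupoid with a single object $e$, in which $s(g)=r(g)=e$ for every $g$ and every pair of elements is composable. Under this identification a $G$-graded ring in the groupoid sense is exactly a $G$-graded ring in the usual (group) sense, and the two conditions recalled above are word-for-word those appearing in \cite[Definition~3.3]{Nystedt2020}. Hence the present notion restricts to the one of \cite{Nystedt2020} when $G$ happens to be a group, so the former generalizes the latter.

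Next I would treat \cite[Definition~34]{NOP2020}, the notion of an epsilon-strongly $G$-graded ring: for each $g\in G$ there is an element $\epsilon_g\in S_g S_{g^{-1}}$ such that $\epsilon_g a=a$ and $a\epsilon_{g^{-1}}=a$ for all $a\in S_g$. I would first observe that $\epsilon_g$ is then a two-sided multiplicative identity of the ring $S_g S_{g^{-1}}$: writing a generic element of $S_g S_{g^{-1}}$ as a finite sum of products $ab$ with $a\in S_g$ and $b\in S_{g^{-1}}$, the relation for $g$ yields $\epsilon_g(ab)=(\epsilon_g a)b=ab$, while the relation for $g^{-1}$ (applied to $b\in S_{g^{-1}}$, using $(g^{-1})^{-1}=g$) yields $(ab)\epsilon_g=a(b\epsilon_g)=ab$. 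In particular $S_g S_{g^{-1}}$ is unital, hence $s$-unital, since $\epsilon_g$ serves as an $s$-unit for any finite subset and Proposition~\ref{P2:prop-s-unitalBiModule} applies. Secondly, since $\epsilon_g\in S_g S_{g^{-1}}$, the chain of inclusions
\[
S_g=\epsilon_g S_g\subseteq S_g S_{g^{-1}} S_g\subseteq S_{r(g)} S_g\subseteq S_g,
\]
which uses $S_g S_{g^{-1}}\subseteq S_{gg^{-1}}=S_{r(g)}$ together with $S_{r(g)} S_g\subseteq S_{r(g)g}=S_g$, forces $S_g S_{g^{-1}} S_g=S_g$. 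Consequently every epsilon-strongly $G$-graded ring is nearly epsilon-strongly $G$-graded, as desired.

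This is a routine bookkeeping argument, and I do not anticipate any real obstacle. The only points that require a little care are matching the precise formulation of epsilon-strongness in \cite[Definition~34]{NOP2020} (phrased there via the local units $\epsilon_g$) with the compact two-condition description used here, and invoking ``unital implies $s$-unital'' with respect to the possibly non-unital notion of $s$-unitality recalled in Section~\ref{P2-Preliminaries}.
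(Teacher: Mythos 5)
The paper states this remark without proof, treating it as routine, so there is no official argument to compare against; your verification is correct and is precisely the check one would expect. Both halves are handled properly: the identification of a group with a one-object groupoid for \cite[Definition 3.3]{Nystedt2020}, and, for \cite[Definition~34]{NOP2020}, the observation that $\epsilon_g$ is a two-sided identity of $S_gS_{g^{-1}}$ (the relation $b\epsilon_g=b$ for $b\in S_{g^{-1}}$ coming from the defining property applied to $g^{-1}$), which yields $s$-unitality of $S_gS_{g^{-1}}$ and, via $S_g=\epsilon_gS_g\subseteq S_gS_{g^{-1}}S_g\subseteq S_{r(g)}S_g\subseteq S_g$, the equality $S_gS_{g^{-1}}S_g=S_g$.
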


The following characterization of a nearly epsilon-strongly groupoid graded ring appeared in \cite[Prop.~15]{Lannstrom2022} without a proof.
For the convenience of the reader, we provide it here.

\begin{proposition}\label{P2-equiv-nes}
Let $S$ be a $G$-graded ring.
The following statements are equivalent:
\begin{enumerate}[{\rm(i)}]
\item $S$ is nearly epsilon-strongly $G$-graded;
\item For all $g \in G$ and $d \in S_g,$ there exist $\epsilon_g(d) \in S_g S_{g^{-1}}$ and $\epsilon'_g(d) \in S_{g^{-1}}S_g$ such that $\epsilon_g(d)d=d\epsilon'_g(d)=d.$
\end{enumerate}
\end{proposition}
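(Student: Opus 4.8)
The plan is to prove the two implications separately, with the substantive work concentrated in (i) $\Rightarrow$ (ii).

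For the direction (ii) $\Rightarrow$ (i), I would argue as follows. Fix $g \in G$. We must show that $S_g S_{g^{-1}}$ is an $s$-unital ring and that $S_g S_{g^{-1}} S_g = S_g$. The inclusion $S_g S_{g^{-1}} S_g \subseteq S_g$ is automatic from the grading (since $g g^{-1} g = g$), and for the reverse inclusion, any $d \in S_g$ satisfies $d = \epsilon_g(d) d \in (S_g S_{g^{-1}}) S_g$, giving $S_g = S_g S_{g^{-1}} S_g$. For $s$-unitality of $S_g S_{g^{-1}}$, take a typical element, or more generally, using Proposition~\ref{P2:prop-s-unitalBiModule}, a finite collection of elements of $S_g S_{g^{-1}}$; each such element is a finite sum of terms $a_i b_i$ with $a_i \in S_g$, $b_i \in S_{g^{-1}}$. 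The idea is to produce a common $s$-unit. One first finds, for each $i$, an element $\epsilon_g(a_i) \in S_g S_{g^{-1}}$ with $\epsilon_g(a_i) a_i = a_i$; one then needs an $s$-unit for the finite set $\{\epsilon_g(a_i)\}$ inside $S_g S_{g^{-1}}$, which requires knowing $S_g S_{g^{-1}}$ is already $s$-unital — circular. So instead I would proceed more carefully: show directly that $S_g S_{g^{-1}}$ is left $s$-unital by noting $a_i b_i = \epsilon_g(a_i) a_i b_i$, so $\epsilon_g(a_i)$ is a left unit for $a_i b_i$; to handle a finite sum one uses the standard trick (as in Proposition~\ref{P2:prop-s-unital}) of iterating: if $u$ is a left $s$-unit for $\{x_1,\dots,x_{n-1}\}$ and $v$ for $\{x_n - u x_n\}$, then $w := u + v - vu$ is a left $s$-unit for $\{x_1,\dots,x_n\}$ — and this only uses that each individual element of $S_g S_{g^{-1}}$ has a left $s$-unit in $S_g S_{g^{-1}}$, which we just established. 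Symmetrically, using $\epsilon'_{g^{-1}}$ (or rather, observing $a_i b_i = a_i b_i \epsilon'_g(\text{something})$; more precisely $b_i = b_i \epsilon'_{g^{-1}}(b_i)$ with $\epsilon'_{g^{-1}}(b_i) \in S_{g^{-1}} \cdot S_g \cdot \ldots$ — one must be mildly careful which $\epsilon$ applies) one gets right $s$-unitality, hence $S_g S_{g^{-1}}$ is $s$-unital.

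For the direction (i) $\Rightarrow$ (ii), fix $g \in G$ and $d \in S_g$. Since $S_g S_{g^{-1}} S_g = S_g$, write $d = \sum_{k=1}^n a_k b_k c_k$ with $a_k, c_k \in S_g$ and $b_k \in S_{g^{-1}}$; note each $a_k b_k \in S_g S_{g^{-1}}$ and $b_k c_k \in S_{g^{-1}} S_g$. Since $S_g S_{g^{-1}}$ is $s$-unital, apply Proposition~\ref{P2:prop-s-unitalBiModule} to the finite set $\{a_1 b_1, \dots, a_n b_n\}$ to obtain $\epsilon \in S_g S_{g^{-1}}$ with $\epsilon (a_k b_k) = a_k b_k$ for all $k$; then $\epsilon d = \sum_k \epsilon(a_k b_k) c_k = \sum_k a_k b_k c_k = d$, so set $\epsilon_g(d) := \epsilon$. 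For the right-hand side, I need $S_{g^{-1}} S_g$ to be $s$-unital so that I can find $\epsilon' \in S_{g^{-1}} S_g$ with $(b_k c_k)\epsilon' = b_k c_k$ for all $k$, whence $d \epsilon' = \sum_k a_k (b_k c_k)\epsilon' = d$. This is where the main obstacle lies: the definition only posits $s$-unitality of $S_g S_{g^{-1}}$ for each $g$, so I must observe that applying this hypothesis with $g$ replaced by $g^{-1}$ gives exactly that $S_{g^{-1}} S_g = S_{g^{-1}}(S_{g^{-1}})^{-1}$ is $s$-unital — the point being $(g^{-1})^{-1} = g$, so $G$ being closed under inversion makes the hypothesis symmetric. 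With that remark the construction of $\epsilon'_g(d)$ goes through exactly as for $\epsilon_g(d)$.

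The genuinely delicate point — and the step I'd expect a careful referee to scrutinize — is bookkeeping in the (ii) $\Rightarrow$ (i) direction: establishing full (two-sided) $s$-unitality of $S_g S_{g^{-1}}$ from the pointwise one-sided data $\epsilon_g(d), \epsilon'_g(d)$, without circularity. The resolution is to (a) get left $s$-units for single elements of $S_g S_{g^{-1}}$ directly from the $\epsilon_g$'s, (b) promote to finite sets via the iterative trick of Proposition~\ref{P2:prop-s-unital}, (c) get right $s$-units for single elements of $S_g S_{g^{-1}}$ from the $\epsilon'$ data — here one uses that $e := a b \in S_g S_{g^{-1}}$ has $e = a b = a \epsilon'_{g}(\,\cdot\,) b$... more cleanly: for $b \in S_{g^{-1}}$, hypothesis (ii) applied to $g^{-1}$ gives $\epsilon'_{g^{-1}}(b) \in S_{(g^{-1})^{-1}} S_{g^{-1}} = S_g S_{g^{-1}}$ with $b\, \epsilon'_{g^{-1}}(b) = b$, hence $(ab)\,\epsilon'_{g^{-1}}(b) = ab$, so $\epsilon'_{g^{-1}}(b)$ is a right $s$-unit for $ab$ — and (d) promote to finite sets again, then combine a common left $s$-unit and a common right $s$-unit into a single two-sided one by the usual $w = u + v - uv$ argument. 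Everything else is routine manipulation within the grading.
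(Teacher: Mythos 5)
Your proof is correct and follows essentially the same route as the paper's: (i) $\Rightarrow$ (ii) via the decomposition $d=\sum_{k}a_kb_kc_k$ together with $s$-unitality of $S_gS_{g^{-1}}$ and of $S_{g^{-1}}S_g$ (the latter obtained, as you note, by applying the hypothesis to $g^{-1}$), and (ii) $\Rightarrow$ (i) by producing left units from the $\epsilon_g$-data and right units from (ii) applied to $g^{-1}$. The paper avoids your hand-rolled promotion to finite sets by observing that (ii) says precisely that $S_g$ is a left $s$-unital and $S_{g^{-1}}$ a right $s$-unital $S_gS_{g^{-1}}$-module, so Proposition~\ref{P2:prop-s-unital} applied to $\{a_1,\ldots,a_n\}$ and $\{b_1,\ldots,b_n\}$ gives $u,u'$ with $um=m=mu'$ directly; this also renders your final step of merging left and right units superfluous, since the paper's definition of an $s$-unital ring only asks for one-sided units (and, incidentally, the combining element there should be $u+v-vu$ rather than $u+v-uv$, while the induction in your step (b) needs left units only for single products $ab$ --- which is all you actually established, and all that is needed, since $x_n-ux_n$ is again such a product).
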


\begin{proof} 
Suppose that (i) holds.
Let $g \in G$ and $d \in S_g.$
We may write
$d =\sum_{i=1}^n a_ib_ic_i$
for some $n\in \mathbb{Z}^+$,
$a_1,\ldots,a_n,c_1,\ldots,c_n \in S_g$ and $b_1,\ldots,b_n \in S_{g^{-1}}$.
Notice that $a_ib_i \in S_g  S_{g^{-1}}$ and $b_i c_i \in  S_{g^{-1}}S_g$ for every $i \in \{1,\ldots,n\}.$ By assumption,  $S_g  S_{g^{-1}}$ and $S_{g^{-1}}S_g$ are $s$-unital and hence,
by Proposition~\ref{P2:prop-s-unital},
there exist $\epsilon_g(d) \in S_g  S_{g^{-1}}$ and $\epsilon'_g(d) \in S_{g^{-1}}S_g$ such that $\epsilon_g(d)a_ib_i=a_ib_i$ and $b_i c_i \epsilon'_g(d)=b_i c_i$ for every $i \in \{1,\ldots,n\}.$ Thus, $\epsilon_g(d)d=d\epsilon'_g(d)=d.$
This shows that (ii) holds.

Conversely, suppose that (ii) holds.
Let $g \in G.$ 
Note that, by assumption, $S_g$ is $s$-unital as a left $S_gS_{g^{-1}}$-module and $S_{g^{-1}}$ is $s$-unital as a right $S_g S_{g^{-1}}$-module.
Let $m \in S_g S_{g^{-1}}.$ 
We may write 
$m=\sum_{i=1}^n a_i b_i$
for some $n \in \mathbb{Z}^+,$ $a_1,\ldots,a_n \in S_g$ and $b_1,\ldots,b_n \in S_{g^{-1}}$.
By $s$-unitality of the left $S_g S_{g^{-1}}$-module $S_g$, and Proposition~\ref{P2:prop-s-unital}, there is some
$u \in S_gS_{g^{-1}}$ such that $u a_i=a_i$ for every $i \in \{1,\ldots,n\}.$
Similarly, there is some
$u' \in S_gS_{g^{-1}}$ such that $b_i u'=b_i$ for every $i \in \{1,\ldots,n\}.$
Hence, $u m = m$ and $m u'=m$.
This shows that $S_g S_{g^{-1}}$ is $s$-unital.
Note that $S_g S_{g^{-1}}S_g \subseteq S_{gg^{-1}g} = S_g.$ 
Using that $S_g$ is $s$-unital as a left $S_g S_{g^{-1}}$-module we get that
$S_g \subseteq (S_g S_{g^{-1}}) S_g$.
Thus, $S_g=S_g S_{g^{-1}} S_g$.
This shows that (i) holds.
\end{proof}

The following result generalizes \cite[Prop.~2.13]{Lannstrom2021} from the group setting.

\begin{proposition}\label{P2-prop:NearlyProperties}
Let $S$ be a nearly epsilon-strongly $G$-graded ring. The following assertions hold:
\begin{enumerate}[{\rm (i)}]
\item $S_e$ is an $s$-unital ring, for every $e\in G_0$.

\item $d \in d (\oplus_{e \in G_0}S_e) \cap (\oplus_{e \in G_0}S_e)d$, for every $d \in S.$ 

\item $S$ is $s$-unital and $\oplus_{e \in G_0}S_e$ is an $s$-unital subring of $S.$

\item Suppose that $H$ is a subgroupoid of $G.$
Then $\oplus_{h \in H}S_h$ is a nearly epsilon-strongly $H$-graded ring. 

\item $\oplus_{g \in G_e^e}S_g$ is an $s$-unital ring, for every $e \in G_0.$

\item The set
$$G':=\{g \in G: S_{s(g)}\neq\{0\} \text{ and } S_{r(g)}\neq\{0\}\}$$
is a subgroupoid of $G$. 

\item $S=\oplus_{g\in G'}S_g.$

\end{enumerate}
\end{proposition}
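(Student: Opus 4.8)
The plan is to prove the seven assertions essentially in the order stated, since each one tends to feed the next. For (i), I would start from the nearly epsilon-strong condition at the unit $e\in G_0$: since $e^{-1}=e$, we have $S_e S_e$ is $s$-unital and $S_e S_e S_e = S_e$. Using Proposition~\ref{P2-equiv-nes}(ii) applied at $g=e$, every $d\in S_e$ has $\epsilon_e(d)\in S_e S_e$ and $\epsilon'_e(d)\in S_e S_e$ with $\epsilon_e(d)d = d\epsilon'_e(d) = d$; by Proposition~\ref{P2:prop-s-unital} one upgrades this to a single two-sided $s$-unit for any finite subset of $S_e$, giving that $S_e$ is an $s$-unital ring. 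For (ii), given $d=\sum_{g\in\Supp(d)}d_g$, apply Proposition~\ref{P2-equiv-nes}(ii) to each homogeneous component $d_g\in S_g$ to get $\epsilon_g(d_g)\in S_g S_{g^{-1}}\subseteq S_{r(g)}$ and $\epsilon'_g(d_g)\in S_{g^{-1}}S_g\subseteq S_{s(g)}$; then use Proposition~\ref{P2-prop:NearlyProperties}(i) (already proved) plus Proposition~\ref{P2:prop-s-unital} to find, for the finitely many idempotents-like elements $\{\epsilon_g(d_g)\}$ lying in the various $S_{r(g)}$ (and likewise $\{\epsilon'_g(d_g)\}$ in $S_{s(g)}$), a single $u\in\oplus_{e\in G_0}S_e$ acting as a left identity on all the $\epsilon_g(d_g)$ and a single $u'\in\oplus_{e\in G_0}S_e$ acting as a right identity on all the $\epsilon'_g(d_g)$; then $ud = d$ and $du' = d$. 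Assertion (iii) is then immediate: (ii) says every $d\in S$ lies in $(\oplus_e S_e)d\cap d(\oplus_e S_e)\subseteq Sd\cap dS$, so $S$ is $s$-unital, and restricting (ii) to $d\in\oplus_e S_e$ shows that subring is $s$-unital too (one should note $\oplus_{e\in G_0}S_e$ is a subring since $S_eS_f\subseteq S_{ef}$ and $S_eS_f=\{0\}$ unless $e=f$).

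For (iv), I would observe that if $H$ is a subgroupoid then for each $h\in H$ we have $h^{-1}\in H$, so $S_h S_{h^{-1}}$ and $S_{h^{-1}}S_h$ and the relation $S_h S_{h^{-1}}S_h = S_h$ are all internal to $\oplus_{k\in H}S_k$; combined with Remark~\ref{P2-rem:GradedRings}(a) (which says $S_H$ is an $H$-graded subring), the nearly epsilon-strong axioms for $S$ restrict verbatim to $S_H$. Assertion (v) is the special case $H = G_e^e$: the isotropy group is a subgroupoid (indeed a subgroup-like object with single unit $e$), so $\oplus_{g\in G_e^e}S_g$ is nearly epsilon-strongly $G_e^e$-graded, and then (iii) applied to this ring gives that it is $s$-unital.

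The genuinely delicate point is (vi), that $G' = \{g : S_{s(g)}\neq\{0\},\ S_{r(g)}\neq\{0\}\}$ is a subgroupoid; this is where I expect the main obstacle. Closure under inversion is trivial since $s(g^{-1}) = r(g)$ and $r(g^{-1}) = s(g)$. The nonobvious part is closure under composition: if $g,h\in G'$ with $s(g) = r(h)$, I must show $gh\in G'$, i.e.\ $S_{r(gh)} = S_{r(g)}\neq\{0\}$ and $S_{s(gh)} = S_{s(h)}\neq\{0\}$ — but those are exactly the hypotheses $g\in G'$, $h\in G'$ give us, so closure under composition is actually also immediate once one checks that $s(g) = r(h)$ forces $r(gh) = r(g)$ and $s(gh) = s(h)$. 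So the real content of (vi) is just that $G'$ is nonempty when needed — but as a subgroupoid it is allowed to be empty, so there is nothing further to prove; the ``obstacle'' dissolves on inspection, and the only care needed is to state it cleanly. Finally (vii): I claim $S_g = \{0\}$ whenever $g\notin G'$. If $S_{s(g)} = \{0\}$ then, since $S$ is nearly epsilon-strong, $S_g = S_g S_{g^{-1}} S_g \subseteq S_g (S_{g^{-1}}S_g) \subseteq S_g S_{s(g)} = \{0\}$ using $S_{g^{-1}}S_g\subseteq S_{s(g)}$; symmetrically, if $S_{r(g)} = \{0\}$ then $S_g = S_g S_{g^{-1}}S_g\subseteq (S_gS_{g^{-1}})S_g\subseteq S_{r(g)}S_g = \{0\}$. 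Hence every homogeneous component indexed outside $G'$ vanishes, so $S = \oplus_{g\in G}S_g = \oplus_{g\in G'}S_g$, which is (vii). I would present (vi) before (vii) since (vii) shows $S$ is genuinely $G'$-graded as a ring, which is the payoff making the reduction in the main theorem sensible.
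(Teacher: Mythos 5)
Your proposal is correct and follows essentially the same route as the paper's proof: (ii) is obtained by summing, over the finitely many units occurring in $\Supp(d)$, local units for the elements $\epsilon_g(d_g)\in S_{r(g)}$ (resp.\ $\epsilon'_g(d_g)\in S_{s(g)}$) supplied by (i) and Proposition~\ref{P2:prop-s-unital}, with (iii)--(vi) handled exactly as in the paper. The only cosmetic differences are in (i), where the paper instead notes $S_e=S_eS_e$ is $s$-unital by hypothesis, and in (vii), where you use $S_g=S_gS_{g^{-1}}S_g\subseteq S_gS_{s(g)}$ directly while the paper invokes the local units of Proposition~\ref{P2-equiv-nes}; both variants are sound.
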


\begin{proof}
(i) 
Take $e\in G_0$.
By assumption, 
 $S_e=S_eS_eS_e$ and $S_eS_e$ is an $s$-unital ring. Hence, 
 $S_e=(S_eS_e)S_e \subseteq S_eS_e  \subseteq S_{e}.$
 Thus, $S_e=S_eS_e$ is an $s$-unital ring.

(ii)
Let $d=\sum_{t \in G}d_t \in S$, with $d_t \in S_t.$ Take $g \in \Supp(d)$. 
By Proposition~\ref{P2-equiv-nes}, there exist $u_g \in S_g S_{g^{-1}}\subseteq S_{r(g)}$ and $u_{g^{-1}}' \in S_{g^{-1}}S_g \subseteq S_{s(g)}$ such that $u_gd_g=d_gu_{g^{-1}}'=d_g.$
The set $B:=\{r(t): t \in \Supp(d) \}$ is finite, because
 $\Supp(d)$ is finite.
 For every $f\in B$, 
 by 
 (i),
 $S_f$ is $s$-unital,
 and we let $v_f \in S_f$ be an $s$-unit for the finite set $\{u_t: t \in \Supp(d) \text{ and } r(t)=f \} \subseteq S_f.$ Define $a:=\sum_{f \in B}v_f \in \oplus_{e \in G_0}S_e.$
We get that 
\begin{align*}
a d &=
\sum\limits_{f \in B}v_f \sum\limits_{t \in G}d_t
=\sum\limits_{t \in \Supp(d)} v_{r(t)} d_t
=\sum\limits_{t \in \Supp(d)}v_{r(t)}(u_td_t) \\
&=\sum\limits_{t \in \Supp(d)}(v_{r(t)}u_t)d_t=\sum\limits_{t \in \Supp(d)}u_td_t=\sum\limits_{t \in G}d_t=d.
\end{align*}

Similarly, we define the finite set $D:=\{s(t) : t \in \Supp(d) \}.$ For every $f \in D,$ we let $v_{f}' \in S_{f}$ be an $s$-unit for the finite set $\{u_{t^{-1}}' : t \in \Supp(d) \text{ and } s(t)=f \} \subseteq S_{f}.$ Define $a':=\sum\limits_{f \in D}v_{f}' \in \oplus_{e \in G_0}S_e.$ We get that
\begin{align*}
d a' &=\sum\limits_{t \in G}d_t\sum\limits_{f \in D}v_{f}'
=\sum\limits_{t \in \Supp(d)} d_t v_{s(t)}' 
=\sum\limits_{t \in \Supp(d)}(d_t u_{t^{-1}}') v_{s(t)}' \\
&=\sum\limits_{t \in \Supp(d)}d_t(u_{t^{-1}}' v_{s(t)}')=\sum\limits_{t \in \Supp(d)}d_t u_{t^{-1}}'=\sum\limits_{t \in G}d_t=d.
\end{align*}

(iii)
It follows immediately from (ii).

(iv)
It follows immediately from Remark~\ref{P2-rem:GradedRings}
and the fact that $H\subseteq G$.

(v)
Take $e \in G_0$.
Clearly, the isotropy group $G_e^e$ is a subgroupoid of $G$. By
(iv) and (iii) we get that $\oplus_{g \in G_e^e}S_g$ is $s$-unital.

(vi)
Clearly, $g^{-1} \in G'$ whenever $g\in G'$.
Suppose that $g,h \in G'$ and $(g,h)\in G^2$. Then $S_{s(gh)}=S_{s(h)}\neq\{0\}$ and $S_{r(gh)}=S_{r(g)}\neq\{0\}.$ Therefore, $gh \in G'.$ 
This shows that $G'$ is a subgroupoid of $G$.

(vii)
Take $g \in G$ such that $S_g \neq \{0\}$. We claim that $g\in G'$. If we assume that the claim holds, then clearly $S=\oplus_{g\in G'}S_g.$
Now we show the claim.
Let $d \in S_g$ be  nonzero. By Proposition~\ref{P2-equiv-nes}, there are $\epsilon_g(d) \in S_gS_{g^{-1}} \subseteq S_{r(g)}$ and $\epsilon_g'(d) \in S_{g^{-1}}S_g \subseteq S_{s(g)}$ such that $\epsilon_g(d)d=d\epsilon_g'(d)=d.$
In particular, $S_{r(g)} \neq \{0\}$ and $S_{s(g)} \neq \{0\}.$
\end{proof}

\begin{obs}
(a)
Note that (vi) above holds for any $G$-graded ring. For (vii), however, the nearly epsilon-strongness of the $G$-grading is necessary.

(b)
Suppose that $S$ is a nearly epsilon-strongly $G$-graded ring.
By Proposition~\ref{P2-prop:NearlyProperties} (vii),
$g\in G'$ whenever $S_g \neq \{0\}$. The converse, however, need not hold (see Example~\ref{P2:ex-partial-connected-not-prime}).
\end{obs}

\begin{example}\label{P2-exgraded}
Let $G:=\{f_1, f_2, f_3, g, h, g^{-1}, h^{-1}, hg^{-1},gh^{-1}\}$ be a groupoid with $G_0=\{f_1, f_2, f_3\}$ and depicted as follows:

\[
\xymatrixcolsep{4pc}
\xymatrix{{\bullet} \ar@(dl,ul)^{f_1} \ar@/^3.4pc/[rr]^{hg^{-1}} \ar@/^1.45pc/[r]^{g^{-1}} & {\bullet} 
\ar@(dl,ul)^{f_2} 
\ar@/^1.2pc/[l]^g \ar@/^1.2pc/[r]^h &
{\bullet} \ar@(dr,ur)_{f_3} \ar@/^1.2pc/[l]^{h^{-1}} \ar@/^3.4pc/[ll]^{gh^{-1}}} 
\]

Let $S:=\mathcal{M}_{3}(\mathbb{Z}),$ be the ring of  $3\times 3$ matrices over $\mathbb{Z},$
and let $\{e_{ij}\}_{i,j},$ denote the standard matrix units.
We define: 
$$\begin{aligned}
    S_g &:=\mathbb{Z} e_{12}, \hspace{1cm}  & S_{g^{-1}}:=\mathbb{Z} e_{21},  \hspace{1cm} & S_h:= \mathbb{Z} e_{32}, \\
     S_{h^{-1}} & := \mathbb{Z} e_{23}, \hspace{1cm} & S_{gh^{-1}}:=\mathbb{Z} e_{13}, \hspace{1cm} & S_{hg^{-1}}:=\mathbb{Z} e_{31}, \\
      S_{f_1} & :=\mathbb{Z} e_{11}, \hspace{1cm} & S_{f_2}:= \mathbb{Z} e_{22}, \hspace{1cm} & S_{f_3}:=\mathbb{Z} e_{33}.
\end{aligned}$$

Notice that $S=\oplus_{l \in G}S_l.$ 
It is not difficult to see that
the ring $\mathcal{M}_{3}(\mathbb{Z})$ is nearly epsilon-strongly $G$-graded. 
\end{example}

\subsection{Invariance in groupoid graded rings}

Inspired by \cite[Sec.~3--4]{Lannstrom2021}, 
we shall now examine the relationship between $G$-graded ideals of a $G$-graded ring $S$ and $G$-invariant ideals of the subring $\oplus_{e \in G_0}S_e.$ 
Throughout this section, $S$ denotes an arbitrary $G$-graded ring.

\begin{definition}[{\cite[Def.~3.1, Def.~3.3]{Lannstrom2021}}]\label{P2-def-invariant-nearly}
Let $S$ be a $G$-graded ring.
\begin{enumerate}[{\rm(i)}]

\item  For any $g\in G$ and any subset $I$ of $S$, we write $I^g := S_{g^{-1}} I S_g.$

\item Let $H$ be a subgroupoid of $G$ and let $I$ be a subset of $S.$ Then, $I$ is called \emph{$H$-invariant} if $I^g \subseteq I$ for every $g \in H.$

\end{enumerate}
\end{definition}

\begin{obs}
Note that if $g \in G,$ and $I \subseteq S,$ then 
$$I^g =\left\{\sum\limits_{k=1}^n a_kx_kb_k : n \in \mathbb{Z}^+, a_k \in S_{g^{-1}}, x_k \in I \text{ and } b_k \in S_g \text{ for each } k \in \{1,\ldots, n\} \right\}.$$
\end{obs}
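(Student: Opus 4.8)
The plan is simply to unwind the meaning of a product of subsets of a ring and then verify the claimed equality by a double inclusion. Recall the convention that, for subsets $X,Y\subseteq S$, the symbol $XY$ denotes the set of all finite sums $\sum_i x_iy_i$ with $x_i\in X$ and $y_i\in Y$. Since $S_{g^{-1}}$ and $S_g$ are additive subgroups of $S$, this subset-product is associative, so $I^g=S_{g^{-1}}IS_g$ is unambiguously the set $(S_{g^{-1}}I)S_g$; in particular $I^g$ is automatically closed under addition.

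For the inclusion $\supseteq$, I would take an element of the form $\sum_{k=1}^{n}a_kx_kb_k$ with $n\in\mathbb{Z}^+$, $a_k\in S_{g^{-1}}$, $x_k\in I$ and $b_k\in S_g$. For each $k$ we have $a_kx_k\in S_{g^{-1}}I$, so $\sum_{k=1}^{n}(a_kx_k)b_k$ is, by definition, a finite sum of products of an element of $S_{g^{-1}}I$ with an element of $S_g$, hence it belongs to $(S_{g^{-1}}I)S_g=I^g$.

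For the inclusion $\subseteq$, I would take an arbitrary $y\in I^g=(S_{g^{-1}}I)S_g$ and write $y=\sum_{j=1}^{m}c_jb_j$ with $c_j\in S_{g^{-1}}I$ and $b_j\in S_g$. Writing each $c_j=\sum_{i=1}^{m_j}a_{ij}x_{ij}$ with $a_{ij}\in S_{g^{-1}}$ and $x_{ij}\in I$, we obtain $y=\sum_{j=1}^{m}\sum_{i=1}^{m_j}a_{ij}x_{ij}b_j$, which, after relabelling the finitely many index pairs $(i,j)$ by a single index, is exactly of the asserted form. There is no real obstacle here; the only points that require any care are fixing the convention that subset products mean sets of finite sums (so that $I^g$ genuinely is an additive subgroup) and the bookkeeping of the double-indexed sums when collapsing them to the single-sum normal form.
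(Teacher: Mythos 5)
Your proposal is correct: the paper states this as a remark without proof, since it is just the standard convention that a product of subsets of a ring consists of all finite sums of products, and your double-inclusion argument (with the collapsing of double-indexed sums) is exactly the implicit justification. Nothing further is needed.
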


\begin{lema} 
If $g \in G$ and $J$ is an ideal of $\oplus_{e \in G_0}S_e,$ then $J^g$ is an ideal of $\oplus_{e \in G_0}S_e.$
\end{lema}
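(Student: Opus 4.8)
The plan is to verify directly that $J^g$ is closed under addition, and absorbs multiplication from $\oplus_{e\in G_0}S_e$ on both sides. First I would record the explicit description of $J^g$ from the preceding remark: an element of $J^g = S_{g^{-1}}JS_g$ is a finite sum $\sum_{k=1}^n a_k x_k b_k$ with $a_k\in S_{g^{-1}}$, $x_k\in J$, $b_k\in S_g$. Closure under addition is then immediate, since concatenating two such finite sums again yields a finite sum of the required form. Also note that $S_{g^{-1}}\subseteq S_{r(g^{-1})}=S_{s(g)}$ and $S_g\subseteq S_{s(g^{-1})}=S_{r(g)}$ are contained in $\oplus_{e\in G_0}S_e$, so $J^g\subseteq\oplus_{e\in G_0}S_e$; in fact $J^g\subseteq S_{s(g)}$.

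Next I would check the two-sided absorption. Let $r = \sum_{e\in G_0} r_e\in\oplus_{e\in G_0}S_e$ with $r_e\in S_e$, and let $y = \sum_{k=1}^n a_kx_kb_k\in J^g$. For left absorption, compute $ry = \sum_{e}\sum_k r_e a_k x_k b_k$. Since $a_k\in S_{g^{-1}}\subseteq S_{s(g)}$, the product $r_e a_k$ vanishes unless $e = r(s(g)) = s(g)$, so $ry = \sum_k (r_{s(g)}a_k)x_kb_k$ with $r_{s(g)}a_k\in S_{s(g)}S_{g^{-1}}\subseteq S_{g^{-1}}$. Hence each summand has the form (element of $S_{g^{-1}}$)(element of $J$)(element of $S_g$), so $ry\in J^g$. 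The argument for $yr$ is symmetric, using $b_k\in S_g\subseteq S_{r(g)}$ and $S_gS_{s(g)}\subseteq S_g$, which places the product back in $J^g$. Note that the hypothesis that $J$ is an ideal of $\oplus_{e\in G_0}S_e$ is not actually needed here — $J$ being merely a subset already makes $J^g$ closed under these outer multiplications — but it does no harm to state the lemma as is; if one wants, one can observe additionally that when $J$ is an ideal the middle factors $x_k$ may be absorbed as well, though this is not required.

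There is essentially no hard part in this lemma; the only thing to be mildly careful about is the bookkeeping with source and range maps, namely tracking which homogeneous components survive when multiplying an element of $S_e$ against an element of $S_{g^{-1}}$ or $S_g$, and observing that the grading forces all but one term to vanish so that the result stays inside the prescribed triple product. I would therefore present the proof compactly: state the explicit form of elements of $J^g$, note additive closure, and then carry out the one-line computation for $ry$ and remark that $yr$ is handled dually.
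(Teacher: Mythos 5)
Your overall strategy is the same as the paper's: check that $J^g$ is an additive subgroup of $\oplus_{e\in G_0}S_e$ and that it absorbs multiplication by $\oplus_{e\in G_0}S_e$ on both sides, the absorption coming from $S_{s(g)}S_{g^{-1}}\subseteq S_{g^{-1}}$ and $S_gS_{s(g)}\subseteq S_g$, together with the observation that $J^g\subseteq S_{s(g)}$. That is exactly the paper's computation, so the skeleton of your argument is fine.

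However, two of your stated justifications are false for a groupoid graded ring: the inclusions $S_{g^{-1}}\subseteq S_{r(g^{-1})}=S_{s(g)}$ and $S_g\subseteq S_{s(g^{-1})}=S_{r(g)}$. Distinct homogeneous components of a graded ring intersect only in $\{0\}$, so $S_{g^{-1}}\subseteq S_{s(g)}$ can only hold when $g\in G_0$ or $S_{g^{-1}}=\{0\}$; you seem to be importing the partial-action picture, where $A_g$ is an ideal of $A_{r(g)}$, but even there $S_g=A_g\delta_g$ is not contained in $S_{r(g)}=A_{r(g)}\delta_{r(g)}$. Fortunately, every conclusion you draw from these inclusions is true and follows from the grading axiom alone: $S_eS_{g^{-1}}=\{0\}$ unless $(e,g^{-1})\in G^2$, i.e.\ $e=r(g^{-1})=s(g)$, which is the correct reason why only the component $r_{s(g)}$ survives in $ry$ (and symmetrically $S_gS_e=\{0\}$ unless $e=s(g)$ for $yr$); and $J^g=S_{g^{-1}}JS_g\subseteq S_{g^{-1}}S_{r(g)}S_g\subseteq S_{g^{-1}r(g)g}=S_{s(g)}\subseteq\oplus_{e\in G_0}S_e$, which gives the containment you want without invoking any inclusion between components. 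With these one-line replacements your proof coincides with the paper's. Your side remark that the ideal hypothesis on $J$ is not needed for the absorption is correct (only $J\subseteq\oplus_{e\in G_0}S_e$ is used), but it too should be supported by the grading relations rather than the erroneous inclusions.
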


\begin{proof} 
Let $g\in G$ and let $J$ be an ideal of $\oplus_{e \in G_0}S_e$. Notice that $J^g$ is an additive subgroup of $\oplus_{e \in G_0}S_e.$ Moreover,
$(\oplus_{e \in G_0}S_e)J^g=(\oplus_{e \in G_0}S_e)(S_{g^{-1}}JS_g) =  S_{s(g)}S_{g^{-1}}JS_g \subseteq S_{g^{-1}}JS_g = J^g.$ Analogously,
$J^g(\oplus_{e \in G_0}S_e) \subseteq J^g.$ 
\end{proof}

\begin{proposition}\label{P2:prop-SJS-graded} 
Suppose that $J$ is an ideal of $\oplus_{e \in G_0}S_e.$ Then $SJS$ is a $G$-graded ideal of $S.$
\end{proposition}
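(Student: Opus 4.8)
The plan is to show that $SJS$ is both an ideal of $S$ and a graded subgroup, i.e. that $SJS = \oplus_{g \in G}(SJS \cap S_g)$. The fact that $SJS$ is a two-sided ideal of $S$ is immediate from $S S J S \subseteq S J S$ and $S J S S \subseteq S J S$, so the real content is gradedness. First I would observe that since $J \subseteq \oplus_{e \in G_0} S_e$, every element of $SJS$ is a finite sum of products $a x b$ with $a \in S$, $x \in \oplus_{e \in G_0} S_e$, $b \in S$. Decomposing $a = \sum_g a_g$, $b = \sum_h b_h$ and $x = \sum_e x_e$ into homogeneous components, the product $a_g x_e b_h$ lies in $S_g S_e S_h$, which is contained in $S_{geh}$ when $g,e,h$ are suitably composable (in particular $s(g) = e = r(h)$) and is $\{0\}$ otherwise. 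Hence each such product is homogeneous, and therefore $SJS$ is spanned by homogeneous elements.

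The key point that needs care is that being spanned by homogeneous elements does not by itself guarantee that an additive subgroup $I$ of a graded ring satisfies $I = \oplus_g (I \cap S_g)$; one must verify that if a finite sum $\sum_g c_g$ of homogeneous elements $c_g \in S_g$ lies in $I$, then each individual $c_g$ lies in $I$. So the next step is to prove that $SJS$ is closed under the homogeneous projections. Concretely, I would take an arbitrary generator $a x b \in SJS$ with $x \in \oplus_{e \in G_0}S_e$ and show that its homogeneous component in degree $g$ is again of the form (sum of) such generators. Writing $a x b = \sum_{(p,e,q)} a_p x_e b_q$ over the finitely many homogeneous pieces, the degree-$g$ component is obtained by restricting the sum to those triples with $peq = g$; since each $a_p \in S_p \subseteq S$, each $x_e \in S_e \subseteq \oplus_{f \in G_0}S_f$ (here using that $S_e$ itself is one of the summands, hence lies in $J$'s ambient ring, but we actually need it to lie in $J$ — see below), and each $b_q \in S_q \subseteq S$, this restricted sum again lies in $SJS$, provided $x_e \in J$.

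This is where I expect the main obstacle. It is \emph{not} true in general that $x \in J$ forces each homogeneous component $x_e \in J$, because $J$ is only assumed to be an ideal of $\oplus_{e \in G_0}S_e$, not a graded ideal of it. I would resolve this by not decomposing $x$ at all: instead, for a generator $a x b$ with $a = \sum_p a_p$, $b = \sum_q b_q$, I would group the sum as $\sum_{p,q} a_p x b_q$ and note that each term $a_p x b_q$ need not be homogeneous, but I can then project. The cleaner route, which I would ultimately take, is to use Lemma~\ref{P2-lemmapi} (or rather its analogue for the degree-$g$ component): since $\pi_{\{g\}}$ is additive and the product $a_p x b_q$ has support contained in a single coset-like set, I compute directly that the degree-$g$ component of $a x b$ equals $\sum_{\{(p,q)\,:\,\exists e\in G_0,\ peq=g\}} a_p\, x_{s(p)}\, b_q$ where now $x_{s(p)}$ is the \emph{specific} homogeneous component of $x$ in degree $s(p) \in G_0$ — and this does land back in $SJS$ because $x_{s(p)} \in S_{s(p)}$ and, crucially, $S_{s(p)} \subseteq \oplus_{e\in G_0}S_e$ so $x_{s(p)} = (\text{identity-ish element})\cdot x_{s(p)}$ can be rewritten; but to get it into $J$ I multiply on suitable sides using that $J$ is an ideal of $\oplus_{e}S_e$ and that (by Proposition~\ref{P2-prop:NearlyProperties} if we are in the nearly epsilon-strongly case, or more elementarily here) $x_{s(p)}$ can be absorbed. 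Realistically, the honest fix is to first reduce to generators of the form $a x b$ where $x$ is \emph{homogeneous} of degree $e \in G_0$ lying in $J$: this is legitimate because $J \subseteq \oplus_{e \in G_0} S_e$ means every element of $J$ is a finite sum of terms each of which, after multiplying by an appropriate element of $\oplus_e S_e$ on either side (exploiting that $J$ is an ideal), can be taken homogeneous — or simply because $SJS = S(\oplus_e S_e)J(\oplus_e S_e)S$ and one can push the homogeneous idempotent-like factors of $\oplus_e S_e$ outward. Once all generators have the form $a\, x_e\, b$ with $x_e \in S_e \cap J$, the degree-$g$ component of $a\,x_e\,b = (\sum_p a_p)x_e(\sum_q b_q)$ is exactly $\sum_{\{(p,q)\,:\,peq=g\}} a_p x_e b_q \in SJS$, and we are done. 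I would present the argument in this last, streamlined form, flagging the grouping-of-homogeneous-components bookkeeping as the only subtlety.
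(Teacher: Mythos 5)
Your final, ``streamlined'' argument hinges on the reduction that every generator of $SJS$ may be taken of the form $a\,x_e\,b$ with $x_e \in J \cap S_e$, i.e.\ that the middle factor can be assumed to be a \emph{homogeneous} element of $J$. That is precisely the point you yourself flagged as delicate ($J$ need not be a graded ideal of $\oplus_{e \in G_0}S_e$), and neither of your proposed justifications closes it: the identity $SJS = S(\oplus_{e}S_e)J(\oplus_{e}S_e)S$ and the absorption of ``identity-ish'' elements require $s$-unitality of $\oplus_{e\in G_0}S_e$, which is \emph{not} a hypothesis of this proposition (it is stated for an arbitrary $G$-graded ring; $s$-unitality is only added later, in Lemma~\ref{P2-lemma:equal-inv}), and Proposition~\ref{P2-prop:NearlyProperties} is likewise unavailable since the grading is not assumed nearly epsilon-strong. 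Without such hypotheses the reduction is genuinely false, not just unproved: $J$ need not be generated by its homogeneous elements, and $S\bigl(\sum_{e}(J\cap S_e)\bigr)S$ can be strictly smaller than $SJS$. For instance, take $G=G_0=\{e,f\}$, $S_e=S_f=2\mathbb{Z}$, so $S=2\mathbb{Z}\oplus 2\mathbb{Z}$ with componentwise multiplication, and $J=\{(m,n)\in S : m\equiv n \ (\mathrm{mod}\ 4)\}$; then the homogeneous part of $J$ is $4\mathbb{Z}\oplus 4\mathbb{Z}$, and $SJS=8\mathbb{Z}\oplus 8\mathbb{Z}$ while $S(4\mathbb{Z}\oplus 4\mathbb{Z})S=16\mathbb{Z}\oplus 16\mathbb{Z}$. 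So the set of generators you reduce to spans a properly smaller ideal, and the argument as written breaks down.

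The fix is already contained in your own computation, and it is exactly the paper's (one-line) proof: decompose only $a$ and $b$, never $x$. For homogeneous $a_p\in S_p$, $b_q\in S_q$ and arbitrary $x=\sum_{e\in G_0}x_e\in J$, the grading kills all but one middle component, so $a_p x b_q = a_p x_{s(p)} b_q$, which lies in $S_{pq}$ if $s(p)=r(q)$ and is $0$ otherwise; this homogeneous element is \emph{literally equal} to $a_p x b_q$, hence belongs to $SJS$ with no need for $x_{s(p)}$ (or anything homogeneous) to lie in $J$. Consequently every element of $SJS$ is a finite sum of homogeneous elements of $SJS$, and — contrary to your opening worry — this does suffice for gradedness: grouping such a sum by degree exhibits each homogeneous component of the element as a sum of elements of $SJS\cap S_g$, so $SJS=\oplus_{g\in G}(SJS\cap S_g)$. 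In short, both detours in your write-up (the concern about ``spanned by homogeneous elements'' and the reduction to homogeneous middle factors) are unnecessary, and the second is a genuine gap at the stated level of generality.
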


\begin{proof}
It is clear that $SJS$ is an ideal of $S$ and that 
$\oplus_{g \in G}((SJS)\cap S_g) \subseteq SJS.$ Now, we show the reversed inclusion. 
Take $g,h\in G$, $a_g \in S_g,$ $c_h \in S_h$ and  $b=\sum_{e \in G_0}b_e \in J.$ 
If $s(g)\neq r(h),$ then $a_g b c_h=0 \in (SJS)\cap S_{gh}.$
Otherwise, $s(g)=r(h)$, and then $a_g b c_h= a_gb_{s(g)}c_h \in (SJS)\cap S_{gh}.$ Thus, $SJS=\oplus_{g \in G}((SJS)\cap S_g).$   
\end{proof}

\begin{lema}\label{P2-lemma:equal-inv}
Suppose that $\oplus_{e \in G_0}S_e$ is $s$-unital and that $J$ is an ideal of $\oplus_{e\in G_0}S_e.$ Then $J$ is $G$-invariant if, and only if, $(SJS)\cap\oplus_{e \in G_0}S_e=J.$ 
\end{lema}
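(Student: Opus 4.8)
The plan is to prove both implications directly, working with the operator $J \mapsto (SJS) \cap \bigoplus_{e \in G_0} S_e$ and using $s$-unitality to "strip off" homogeneous components at the right objects. Write $S_0 := \bigoplus_{e \in G_0} S_e$ for brevity. Since $S_0$ is $s$-unital by hypothesis, for any finite subset of $J$ there is an $s$-unit in $S_0$, and since an $s$-unit lying in $S_0$ decomposes as a finite sum of homogeneous idempotent-like elements $v_e \in S_e$, multiplication by it respects the grading; I will use this repeatedly.

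First I would prove the forward direction: assume $J$ is $G$-invariant and show $(SJS)\cap S_0 = J$. The inclusion $J \subseteq (SJS) \cap S_0$ follows from $s$-unitality of $S_0$: given $x \in J$, pick $v \in S_0$ with $vxv = x$, so $x = vxv \in S_0 J S_0 \subseteq SJS$, and $x \in S_0$ already. For the reverse inclusion, take a homogeneous-looking generator $a_g b c_h$ with $a_g \in S_g$, $b = \sum_{e} b_e \in J$, $c_h \in S_h$, lying in $S_0$; by the computation in the proof of Proposition~\ref{P2:prop-SJS-graded} this is $a_g b_{s(g)} c_h$ and it lies in $S_{gh}$, so for it to lie in $S_0$ we need $gh \in G_0$, i.e. $h = g^{-1}$ (after replacing $s(g)=r(h)$). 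Then $a_g b_{s(g)} c_{g^{-1}} \in S_g J S_{g^{-1}} = J^{g^{-1}} \subseteq J$ by $G$-invariance. A general element of $(SJS)\cap S_0$ is a finite sum of such generators after expanding everything into homogeneous components and projecting via $\pi_{G_0}$ (Lemma~\ref{P2-lemmapi}, or just grading), so it lies in $J$.

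For the converse, assume $(SJS)\cap S_0 = J$ and fix $g \in G$; I must show $J^g = S_{g^{-1}} J S_g \subseteq J$. Any element of $J^g$ is a finite sum $\sum_k a_k x_k b_k$ with $a_k \in S_{g^{-1}}$, $x_k \in J$, $b_k \in S_g$; each term lies in $S_{g^{-1}} S_{r(g^{-1})\text{-part of }x_k} S_g$. Since $S_{g^{-1}} S_0 = S_{g^{-1}} S_{r(g^{-1})}$ and $S_0 S_g = S_{s(g)} S_g = S_{r(g^{-1})}S_g$... wait, more carefully: $a_k x_k b_k = a_k\,(x_k)_{r(g^{-1})}\,b_k$ since $S_{g^{-1}}S_e = 0$ unless $e = r(g^{-1}) = s(g)$, and this lands in $S_{g^{-1}g} = S_{s(g)} \subseteq G_0$. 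Hence $\sum_k a_k x_k b_k \in (S J S) \cap S_0 = J$, which is exactly what we wanted.

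The main obstacle I anticipate is purely bookkeeping: being careful that when one expands an element of $(SJS)\cap S_0$ as $\sum a^{(i)} b^{(i)} c^{(i)}$ with $a^{(i)}, c^{(i)} \in S$ and $b^{(i)} \in J$, one may first decompose $a^{(i)}, c^{(i)}$ into homogeneous components and $b^{(i)}$ into its $S_e$-components, and then the constraint that the total lies in $S_0$ forces, term by term, that the surviving homogeneous pieces have composable degrees $g, e, h$ with $geh \in G_0$, hence $h$ and $g$ mutually inverse and $e = s(g)$; everything else is killed either by the grading axiom $S_gS_h = 0$ for non-composable pairs or by the projection $\pi_{G_0}$. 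There is no genuine difficulty once $s$-unitality of $S_0$ supplies the $\subseteq$ direction and the grading supplies the $\supseteq$ direction; the proof is short. No appeal to the nearly epsilon-strong hypothesis is needed beyond what is already packaged into "$S_0$ is $s$-unital."
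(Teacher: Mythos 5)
Your proposal is correct and takes essentially the same route as the paper's proof: $J \subseteq (SJS)\cap\oplus_{e \in G_0}S_e$ from $s$-unitality of $\oplus_{e \in G_0}S_e$, the reverse inclusion by using the grading of $SJS$ (Proposition~\ref{P2:prop-SJS-graded}) to reduce to terms of the form $S_g J S_{g^{-1}} = J^{g^{-1}} \subseteq J$, and the converse from $J^g = S_{g^{-1}}JS_g \subseteq (SJS)\cap S_{s(g)} \subseteq (SJS)\cap\oplus_{e \in G_0}S_e = J$. No substantive differences.
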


\begin{proof}
We first show the ``only if'' statement.
Suppose that $J$ is $G$-invariant. For each $e \in G_0,$ we have $$(SJS)\cap S_e \subseteq \left(\sum_{\substack{g \in G \\ s(g)=e}} (S_{g^{-1}}JS_g)\right)\cap S_e \subseteq \left(\sum_{\substack{g \in G \\ s(g)=e}}J^g\right)\cap S_e \subseteq J.$$ 
Let $a = \sum_{f\in G_0} a_f \in (SJS) \cap \oplus_{e\in G_0} S_e$.
By Proposition~\ref{P2:prop-SJS-graded}, $SJS$ is $G$-graded and we notice that 
$a_f \in (SJS) \cap  S_f \subseteq J$.
Thus, 
$(SJS) \cap \oplus_{e\in G_0}S_e \subseteq J$.
By assumption, $\oplus_{e\in G_0}S_e$ is $s$-unital and $J \subseteq \oplus_{e \in G_0}S_e$. 
 Hence, $J \subseteq (\oplus_{e \in G_0}S_e)J(\oplus_{e \in G_0}S_e) \subseteq (SJS)\cap\oplus_{e \in G_0}S_e.$

Now we show the ``if'' statement.
Suppose that $(SJS)\cap\oplus_{e \in G_0}S_e=J.$ Take $g \in G$ and notice that $J^g=S_{g^{-1}}JS_g \subseteq (SJS)\cap S_{s(g)}\subseteq (SJS)\cap\oplus_{e \in G_0}S_e=J.$ Thus, $J$ is 
$G$-invariant.
\end{proof}

\begin{lema}\label{P2:lemma-inv-ideal} 
If $I$ is a $G$-graded ideal of $S,$ then $I \cap \oplus_{e \in G_0}S_e$ is a $G$-invariant ideal of $\oplus_{e \in G_0}S_e.$
\end{lema}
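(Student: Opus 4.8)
The plan is to show that if $I$ is a $G$-graded ideal of $S$, then $J := I \cap \bigoplus_{e \in G_0} S_e$ is a $G$-invariant ideal of $\bigoplus_{e \in G_0} S_e$. The fact that $J$ is an ideal of $\bigoplus_{e \in G_0} S_e$ has already been established in the proposition above (``if $I$ is an ideal of $S$, then $I \cap \bigoplus_{e \in G_0} S_e$ is an ideal of $\bigoplus_{e \in G_0}S_e$''), so only the $G$-invariance needs to be verified. By Definition~\ref{P2-def-invariant-nearly}, this means checking that $J^g = S_{g^{-1}} J S_g \subseteq J$ for every $g \in G$.

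First I would fix $g \in G$ and take an arbitrary element of $J^g$, which by the Remark following Definition~\ref{P2-def-invariant-nearly} is a finite sum $\sum_{k=1}^n a_k x_k b_k$ with $a_k \in S_{g^{-1}}$, $x_k \in J \subseteq I$, and $b_k \in S_g$. Since $I$ is an ideal of $S$ and $a_k, b_k \in S$, each term $a_k x_k b_k$ lies in $I$, hence the whole sum lies in $I$. It remains to see that this sum also lies in $\bigoplus_{e \in G_0} S_e$. For this I use the grading: since $x_k \in \bigoplus_{e \in G_0} S_e$ I may write $x_k = \sum_{f \in G_0} (x_k)_f$ with $(x_k)_f \in S_f$, and then $a_k x_k b_k = \sum_{f \in G_0} a_k (x_k)_f b_k$. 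The term $a_k (x_k)_f b_k$ is nonzero only when the compositions are defined, i.e. $s(g^{-1}) = r(g) = f$ and $s(f) = r(g) = f$, forcing $f = r(g)$ and landing the product in $S_{g^{-1} f g} = S_{g^{-1} g} = S_{s(g)} \subseteq \bigoplus_{e \in G_0} S_e$. Thus every generating term of $J^g$ lies in $\bigoplus_{e \in G_0} S_e$, and combining with the previous paragraph, $J^g \subseteq I \cap \bigoplus_{e \in G_0} S_e = J$.

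Since $g \in G$ was arbitrary, this shows $J$ is $G$-invariant, completing the proof. I do not expect any serious obstacle here: the argument is a routine bookkeeping combination of the ideal property of $I$ with the composability constraints of the grading, and the key observation — that conjugating an element of the ``identity component'' $\bigoplus_{e \in G_0} S_e$ by $S_g, S_{g^{-1}}$ keeps it in that component because $S_{g^{-1}} S_{r(g)} S_g \subseteq S_{s(g)}$ — is exactly the computation already used in the proof of Lemma~\ref{P2-lemma:equal-inv} (the ``if'' direction), so no new idea is needed. If anything, the only point requiring a little care is making explicit that the relevant homogeneous components of $x_k$ that survive multiplication on both sides by $S_{g^{-1}}$ and $S_g$ are precisely those indexed by $r(g)$.
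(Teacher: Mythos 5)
Your proof is correct and follows essentially the same route as the paper: both arguments reduce to the observation that only the $r(g)$-component of an element of $\oplus_{e \in G_0}S_e$ survives multiplication by $S_{g^{-1}}$ on the left and $S_g$ on the right, so that $S_{g^{-1}}\bigl(I \cap \oplus_{e \in G_0}S_e\bigr)S_g \subseteq I \cap S_{s(g)}$. The only cosmetic difference is that you decompose individual elements $x_k$ rather than the graded ideal itself, which incidentally shows that the gradedness of $I$ is not actually needed here --- the ideal property of $I$ together with the grading of $S$ already suffices.
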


\begin{proof}
Let $I$ be a $G$-graded ideal of $S$.
Clearly, $I \cap \oplus_{e \in G_0}S_e$ is an ideal of $\oplus_{e \in G_0}S_e.$
Take $g \in G.$ Notice that $S_{g^{-1}}(I \cap S_{r(g)})S_g \subseteq I \cap S_{s(g)}.$ Furthermore, if $e \in G_0\setminus \{r(g)\},$ then $S_{g^{-1}}(I \cap S_e)S_g =\{0\}.$ Therefore, $(I \cap \oplus_{e \in G_0}S_e)^g = S_{g^{-1}}(I \cap \oplus_{e \in G_0}S_e)S_g \subseteq I \cap \oplus_{e \in G_0}S_e.$ 
\end{proof}

\begin{lema}\label{P2-lemma:equal-graded}
Let $S$ be a nearly epsilon-strongly $G$-graded ring. If $I$ is a $G$-graded ideal of $S,$ then $I=(I\cap \oplus_{e \in G_0}S_e)S=S(I\cap \oplus_{e \in G_0}S_e)=S(I\cap \oplus_{e \in G_0}S_e)S.$  
\end{lema}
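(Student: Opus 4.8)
The plan is to prove the four-way equality $I=(I\cap B)S=S(I\cap B)=S(I\cap B)S$, where I abbreviate $B:=\oplus_{e\in G_0}S_e$, by showing the two nontrivial inclusions $I\subseteq (I\cap B)S$ and $I\subseteq S(I\cap B)$; once these are available, the remaining relations follow formally since $(I\cap B)S\subseteq IS\subseteq I$ and $S(I\cap B)\subseteq SI\subseteq I$ (as $I$ is an ideal), and similarly $S(I\cap B)S\subseteq I$, while $S(I\cap B)S\supseteq (I\cap B)S$ would follow from $s$-unitality of $S$ — or more directly, $I=(I\cap B)S=(I\cap B)(SS)\subseteq S(I\cap B)S\subseteq I$ once we can relate $(I\cap B)S$ to $S(I\cap B)S$.

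First I would establish $I\subseteq (I\cap B)S$. Take a homogeneous element $d\in I\cap S_g$ for some $g\in G$; since $I$ is $G$-graded, it suffices to handle such $d$. By Proposition~\ref{P2-equiv-nes} (the characterization of nearly epsilon-strongly graded rings), there exists $\epsilon_g(d)\in S_gS_{g^{-1}}\subseteq S_{r(g)}\subseteq B$ with $\epsilon_g(d)d=d$. Now here is the key point: I want to write $d$ as (something in $I\cap B$) times (something in $S$). Write $\epsilon_g(d)=\sum_i a_ib_i$ with $a_i\in S_g$, $b_i\in S_{g^{-1}}$; then $d=\epsilon_g(d)d=\sum_i a_i(b_id)$, and $b_id\in S_{g^{-1}}S_g\subseteq S_{s(g)}\subseteq B$; moreover $b_id\in I$ since $I$ is an ideal, so $b_id\in I\cap B$. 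But this expresses $d$ as (element of $S$) times (element of $I\cap B$), i.e. $d\in S(I\cap B)$, which is the other inclusion! For $d\in (I\cap B)S$ I instead use $\epsilon'_g(d)\in S_{g^{-1}}S_g$ with $d\epsilon'_g(d)=d$, write $\epsilon'_g(d)=\sum_i b_i c_i$ with $b_i\in S_{g^{-1}}$, $c_i\in S_g$, so $d=\sum_i(db_i)c_i$ with $db_i\in S_gS_{g^{-1}}\subseteq S_{r(g)}\subseteq B$ and $db_i\in I$, hence $db_i\in I\cap B$ and $d\in (I\cap B)S$. So both inclusions come directly from the left/right $\epsilon$-elements.

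With $I\subseteq (I\cap B)S$ and $I\subseteq S(I\cap B)$ in hand, and the reverse inclusions $(I\cap B)S\subseteq I$, $S(I\cap B)\subseteq I$, $S(I\cap B)S\subseteq I$ being immediate from $I$ being a two-sided ideal containing $I\cap B$, it remains to get $I\subseteq S(I\cap B)S$. For this I would re-run the argument on a homogeneous $d\in I\cap S_g$ using both $\epsilon$-elements at once: $d=\epsilon_g(d)\,d\,\epsilon'_g(d)$, and expanding $\epsilon_g(d)=\sum_i a_ib_i$, $\epsilon'_g(d)=\sum_j b_j'c_j'$ gives $d=\sum_{i,j}a_i\,(b_i d b_j')\,c_j'$ with the middle factor $b_idb_j'\in S_{g^{-1}}S_gS_{g^{-1}}\subseteq S_{g^{-1}}\subseteq $ hmm — that lands in $S_{g^{-1}}$, not in $B$. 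So I should instead be slightly more careful: use $d=\epsilon_g(d)d$, then $d=\epsilon_g(d)(\epsilon_g(d)d)$ is not obviously helpful either. The cleanest route is: $I\subseteq S(I\cap B)$ gives, applying this again to elements of $I\cap B\subseteq I$ is pointless; rather, $(I\cap B)S\subseteq (I\cap B)(\oplus_{g\in G}S_g)$ and then push an $\epsilon$ in from the left of each homogeneous piece. Concretely, from $I=(I\cap B)S$ and $S$ being $s$-unital (Proposition~\ref{P2-prop:NearlyProperties}(iii)), $I=(I\cap B)S=(I\cap B)SS\subseteq S(I\cap B)S$ once we know $(I\cap B)S\subseteq S(I\cap B)$; but $I\subseteq S(I\cap B)$ is exactly what we proved, and $(I\cap B)S\subseteq I$, so $(I\cap B)S\subseteq I\subseteq S(I\cap B)$, hence $I=(I\cap B)S\subseteq S(I\cap B)S\subseteq I$, giving equality.

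The main obstacle is purely bookkeeping: making sure each product of homogeneous components lands in the claimed grading component ($S_gS_{g^{-1}}\subseteq S_{r(g)}$, $S_{g^{-1}}S_g\subseteq S_{s(g)}$, and these are inside $B$), and correctly invoking that $I$ is a two-sided ideal so that $b_id, db_i\in I$. There is no deep difficulty — the nontrivial content is entirely packaged in Proposition~\ref{P2-equiv-nes}, which supplies the local units $\epsilon_g(d)$, $\epsilon'_g(d)$ that let us "absorb" the homogeneous degree into $B$. I would present it as: reduce to homogeneous $d$ by gradedness of $I$, produce the factorizations via the $\epsilon$-elements, and then assemble the four equalities from the two key inclusions together with the trivial ideal inclusions and $s$-unitality of $S$.
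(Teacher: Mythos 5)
Your proposal is correct and follows essentially the same route as the paper: reduce to homogeneous $d\in I\cap S_g$, use the elements $\epsilon_g(d)\in S_gS_{g^{-1}}$ and $\epsilon'_g(d)\in S_{g^{-1}}S_g$ from Proposition~\ref{P2-equiv-nes} to write $d$ as a sum of products whose factor $b_id$ (resp.\ $db_i$) lies in $I\cap S_{s(g)}$ (resp.\ $I\cap S_{r(g)}$), and then combine the two inclusions with the trivial ideal containments and $s$-unitality of $S$ to get the equality with $S(I\cap\oplus_{e\in G_0}S_e)S$. You correctly abandoned the dead-end attempt with the middle factor in $S_{g^{-1}}$, and the final assembly you give is equivalent to the paper's use of $(I\cap\oplus_{e\in G_0}S_e)\subseteq(I\cap\oplus_{e\in G_0}S_e)S$.
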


\begin{proof}
Let $I$ be a $G$-graded ideal of $S.$ 
By Proposition~\ref{P2-prop:NearlyProperties}, $S$ is $s$-unital and hence $(I\cap \oplus_{e \in G_0}S_e) \subseteq (I\cap \oplus_{e \in G_0}S_e)S.$ Thus, $S(I\cap \oplus_{e \in G_0}S_e) \subseteq S(I\cap \oplus_{e \in G_0}S_e)S \subseteq I.$ Analogously, $(I\cap \oplus_{e \in G_0}S_e)S \subseteq S(I\cap \oplus_{e \in G_0}S_e)S \subseteq I.$ 

We claim that $I \subseteq S(I\cap \oplus_{e \in G_0}S_e).$ Take $g\in G$ and $a_g \in I \cap S_g.$ By Proposition~\ref{P2-equiv-nes}, there is some $u_g \in S_gS_{g{-1}}$ such that $a_g=u_ga_g.$ Then, $u_g=\sum_{j=1}^n b_jc_j$ for some $n\in \mathbb{Z}^+$, $b_1,\ldots,b_n\in S_g$ and $c_1,\ldots,c_n \in S_{g^{-1}}.$
Notice that $c_ja_g \in (S_{g^{-1}}S_g)\cap I\subseteq S_{s(g)}\cap I$ for every $j \in \{1,\ldots,n\}.$ Hence, $a_g=u_ga_g=\sum_{j=1}^n b_jc_ja_g \in S_g(S_{s(g)}\cap I)\subseteq S(\oplus_{e \in G_0}S_e\cap I).$ 
Using that $I$ is $G$-graded,
we get that
$I \subseteq S(\oplus_{e \in G_0}S_e\cap I).$
Similarly, $I \subseteq (\oplus_{e \in G_0}S_e\cap I)S.$ Thus, $I=S(\oplus_{e \in G_0}S_e\cap I)=(\oplus_{e \in G_0}S_e\cap I)S$ and $I=S(\oplus_{e \in G_0}S_e\cap I)\subseteq S(\oplus_{e \in G_0}S_e\cap I)S \subseteq I.$
\end{proof}

\begin{corolario}\label{P2-cor:inv-SJ-JS}
Let $S$ be a nearly epsilon-strongly $G$-graded ring. If $J$ is a $G$-invariant ideal of $\oplus_{e \in G_0}S_e,$ then $SJS=JS=SJ.$
\end{corolario}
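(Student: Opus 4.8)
The plan is to reduce the corollary to the preceding lemmas by first producing a $G$-graded ideal of $S$ whose degree-$e$ part (summed over $e\in G_0$) is exactly $J$, and then invoking Lemma~\ref{P2-lemma:equal-graded} to rewrite it. Concretely, given a $G$-invariant ideal $J$ of $\oplus_{e\in G_0}S_e$, Proposition~\ref{P2:prop-SJS-graded} already tells us that $SJS$ is a $G$-graded ideal of $S$. Since $S$ is nearly epsilon-strongly graded, $\oplus_{e\in G_0}S_e$ is $s$-unital by Proposition~\ref{P2-prop:NearlyProperties}(iii), so Lemma~\ref{P2-lemma:equal-inv} applies and gives $(SJS)\cap\oplus_{e\in G_0}S_e=J$. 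Now apply Lemma~\ref{P2-lemma:equal-graded} to the $G$-graded ideal $I:=SJS$: it yields
$$SJS=I=(I\cap\oplus_{e\in G_0}S_e)S=S(I\cap\oplus_{e\in G_0}S_e),$$
and substituting $I\cap\oplus_{e\in G_0}S_e=J$ gives $SJS=JS=SJ$, as desired.

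So the whole argument is essentially a two-line chaining of results already in the excerpt, and I do not expect any genuine obstacle. The one point that deserves a sentence of care is the hypothesis check for Lemma~\ref{P2-lemma:equal-inv}, namely that $\oplus_{e\in G_0}S_e$ is $s$-unital; this is exactly Proposition~\ref{P2-prop:NearlyProperties}(iii), valid because $S$ is nearly epsilon-strongly graded. A second small point is that $J$ is indeed an ideal of $\oplus_{e\in G_0}S_e$ (given) and is contained in $\oplus_{e\in G_0}S_e$, so that $SJS$ makes sense as written and $J$ can be recovered as its ``diagonal'' part. Beyond that, every implication is a direct citation.

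If one wanted a slightly more self-contained phrasing that avoids quoting Lemma~\ref{P2-lemma:equal-graded} in full, an alternative is to argue the two equalities $SJ=SJS$ and $JS=SJS$ directly: the inclusions $SJ\subseteq SJS$ and $JS\subseteq SJS$ follow from $s$-unitality of $\oplus_{e\in G_0}S_e$ (and hence of $J$ as a ring, or simply $J\subseteq (\oplus_{e\in G_0}S_e)J\subseteq S J$), while the reverse inclusions use the nearly epsilon-strong condition degree by degree exactly as in the proof of Lemma~\ref{P2-lemma:equal-graded}: for $a_g x b_h\in SJS$ with $a_g\in S_g$, $x\in J$, $b_h\in S_h$, one writes $b_h=\epsilon'_h(b_h)b_h$ with $\epsilon'_h(b_h)\in S_{h^{-1}}S_h\subseteq S_{s(h)}$ and absorbs $\epsilon'_h(b_h)$ into $J$ using $G$-invariance, pushing the whole product into $SJ$; symmetrically one gets $SJS\subseteq JS$. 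I expect the cleanest write-up, however, to be the first one: simply cite Proposition~\ref{P2:prop-SJS-graded}, Lemma~\ref{P2-lemma:equal-inv}, and Lemma~\ref{P2-lemma:equal-graded} in that order. The ``hard part'', such as it is, was already done in establishing those lemmas; here there is nothing left but bookkeeping.
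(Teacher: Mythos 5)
Your proposal is correct and follows essentially the same route as the paper's own proof: cite Proposition~\ref{P2:prop-SJS-graded} to see that $SJS$ is $G$-graded, use Lemma~\ref{P2-lemma:equal-graded} to write $SJS=((SJS)\cap\oplus_{e\in G_0}S_e)S=S((SJS)\cap\oplus_{e\in G_0}S_e)$, and use Proposition~\ref{P2-prop:NearlyProperties}(iii) together with Lemma~\ref{P2-lemma:equal-inv} to identify $(SJS)\cap\oplus_{e\in G_0}S_e$ with $J$. The only difference is the order in which the two lemmas are invoked, which is immaterial.
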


\begin{proof}
Let $J$ be a $G$-invariant ideal of $\oplus_{e \in G_0}S_e.$ 
By Proposition~\ref{P2:prop-SJS-graded},  $SJS$ is a $G$-graded ideal of $S,$ and, by Lemma~\ref{P2-lemma:equal-graded},
$SJS=((SJS)\cap \oplus_{e \in G_0}S_e)S=S\cap ((SJS)\cap \oplus_{e \in G_0}S_e).$ 
Thus, by
Proposition~\ref{P2-prop:NearlyProperties} (iii) and Lemma~\ref{P2-lemma:equal-inv}, $SJS=JS=SJ.$
\end{proof}

By Lemmas~\ref{P2:lemma-inv-ideal} and \ref{P2:prop-SJS-graded}, the following maps are well defined: 
$$\phi: \{G\text{-graded ideals of } S\} \ni  I \mapsto I \cap \oplus_{e \in G_0}S_e \in  \{G\text{-invariant ideals of} \oplus_{e \in G_0}S_e \}$$
$$\psi: \{G\text{-invariant ideals of} \oplus_{e \in G_0}S_e\} \ni J \mapsto SJS \in  \{G\text{-graded ideals of } S\}.$$

The following theorem generalizes \cite[Thm.~4.7]{Lannstrom2021} and \cite[Thm.~3.11]{Moreira2022}. 

\begin{theorem}\label{P2-thm:InvGradedIdealCorresp}
Let $S$ be a nearly epsilon-strongly $G$-graded ring. The map $\phi$ defines a bijection between the set of $G$-graded ideals of $S$ and the set of $G$-invariant ideals of $\oplus_{e \in G_0}S_e.$ The inverse of $\phi$ is given by $\psi.$ 
\end{theorem}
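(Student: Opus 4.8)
The plan is to show that $\phi$ and $\psi$ are mutually inverse bijections by verifying the two composites $\psi \circ \phi$ and $\phi \circ \psi$ are the respective identity maps; both directions are now essentially assembled from the lemmas already proved. First I would check that $\phi$ and $\psi$ land in the claimed sets, which is exactly the content of Lemmas~\ref{P2:lemma-inv-ideal} and \ref{P2:prop-SJS-graded}, so no new work is needed there.

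For the composite $\phi \circ \psi$: given a $G$-invariant ideal $J$ of $\oplus_{e \in G_0}S_e$, we must show $(SJS) \cap \oplus_{e \in G_0}S_e = J$. By Proposition~\ref{P2-prop:NearlyProperties}(iii), $\oplus_{e \in G_0}S_e$ is $s$-unital, so Lemma~\ref{P2-lemma:equal-inv} applies verbatim and gives precisely this equality. Hence $\phi(\psi(J)) = J$.

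For the composite $\psi \circ \phi$: given a $G$-graded ideal $I$ of $S$, we must show $S(I \cap \oplus_{e \in G_0}S_e)S = I$. This is exactly the last equality in Lemma~\ref{P2-lemma:equal-graded}, which states $I = S(I \cap \oplus_{e \in G_0}S_e)S$ for any $G$-graded ideal $I$ of a nearly epsilon-strongly $G$-graded ring. Hence $\psi(\phi(I)) = I$.

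Since both composites are identities, $\phi$ is a bijection with inverse $\psi$, completing the proof. There is essentially no obstacle remaining: the theorem is a formal consequence of the two key lemmas, with the only mild point being to confirm that the $s$-unitality hypothesis of Lemma~\ref{P2-lemma:equal-inv} is supplied by Proposition~\ref{P2-prop:NearlyProperties}(iii) under the standing nearly epsilon-strongly assumption. The genuine mathematical content — the inclusions $I \subseteq S(I \cap \oplus_{e\in G_0}S_e)S$ via the local units $\epsilon_g(d)$, and $J \subseteq (\oplus_{e\in G_0}S_e)J(\oplus_{e\in G_0}S_e) \subseteq (SJS)\cap\oplus_{e\in G_0}S_e$ via $s$-unitality — has already been carried out in those lemmas, so the proof of the theorem itself is a short bookkeeping argument citing them.
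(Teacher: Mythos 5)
Your proposal is correct and follows exactly the paper's own argument: well-definedness from Lemma~\ref{P2:lemma-inv-ideal} and Proposition~\ref{P2:prop-SJS-graded}, $\psi\circ\phi=\id$ from Lemma~\ref{P2-lemma:equal-graded}, and $\phi\circ\psi=\id$ from Proposition~\ref{P2-prop:NearlyProperties}~(iii) together with Lemma~\ref{P2-lemma:equal-inv}. No gaps; nothing further is needed.
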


\begin{proof} 
Let $I$ be a $G$-graded ideal of $S.$ Lemma~\ref{P2-lemma:equal-graded} implies that $\psi\circ \phi(I)=S(I \cap \oplus_{e \in G_0}S_e)S=I.$ 
Let $J$ be a $G$-invariant ideal of $S.$ Notice that, by Proposition~\ref{P2-prop:NearlyProperties}~(iii) and Lemma~\ref{P2-lemma:equal-inv}, $\phi\circ \psi(J)=(SJS)\cap \oplus_{e \in G_0}S_e=J.$ 
\end{proof}

\subsection{Graded primeness of groupoid graded rings}\label{graded-prime}

In this section, we identify necessary and sufficient conditions for graded primeness of a groupoid graded ring. 

\begin{definition}\label{P2-def-prime-nearly} 
Let
$S$ be a $G$-graded ring.
\begin{enumerate}[{\rm(i)}]

\item  
$\oplus_{e \in G_0}S_e$ is said to be \emph{$G$-prime} if there are no nonzero $G$-invariant ideals $I,J$ of $\oplus_{e \in G_0}S_e$ such that $IJ=\{0\}$. 

\item  
$S$ is said to be \emph{graded prime} if there are no nonzero $G$-graded ideals $I,J$ of $S$ such that $IJ=\{0\}$.
\end{enumerate}
\end{definition}

The following result generalizes \cite[Prop.~3.28]{Moreira2022}.

\begin{theorem}\label{P2-teo:graded-G-prime}
Let $S$ be a nearly epsilon-strongly $G$-graded ring. Then $S$ is graded prime if, and only if, $\oplus_{e \in G_0}S_e$ is $G$-prime.
\end{theorem}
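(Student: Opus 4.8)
The plan is to exploit the bijection $\phi$ (with inverse $\psi$) between $G$-graded ideals of $S$ and $G$-invariant ideals of $R:=\oplus_{e\in G_0}S_e$ established in Theorem~\ref{P2-thm:InvGradedIdealCorresp}, together with the explicit descriptions $\psi(J)=SJS=JS=SJ$ from Corollary~\ref{P2-cor:inv-SJ-JS} and $\phi(I)=I\cap R$. The strategy is to show that $\phi$ and $\psi$ not only match up ideals but also respect the property of having zero product, so that a witnessing pair of nonzero $G$-invariant ideals of $R$ with zero product corresponds to a witnessing pair of nonzero $G$-graded ideals of $S$ with zero product, and vice versa.

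First I would prove the ``if'' direction (if $R$ is $G$-prime then $S$ is graded prime) by contraposition. Suppose $I_1,I_2$ are nonzero $G$-graded ideals of $S$ with $I_1 I_2=\{0\}$. Put $J_k:=\phi(I_k)=I_k\cap R$. Since $\phi$ is a bijection and $I_k\neq\{0\}$, each $J_k$ is a nonzero $G$-invariant ideal of $R$; here I would note that $J_k\neq\{0\}$ follows because $\psi(J_k)=I_k\neq\{0\}$, i.e. $\phi$ sends nonzero ideals to nonzero ideals (its inverse $\psi$ sends $\{0\}$ to $\{0\}$). Then $J_1 J_2\subseteq I_1 I_2=\{0\}$, contradicting $G$-primeness of $R$. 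This direction needs no nearly-epsilon-strong hypothesis beyond what makes $\phi$ defined, and is essentially immediate.

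For the ``only if'' direction (if $S$ is graded prime then $R$ is $G$-prime), again argue by contraposition: let $J_1,J_2$ be nonzero $G$-invariant ideals of $R$ with $J_1 J_2=\{0\}$, and set $I_k:=\psi(J_k)=SJ_kS$. By Theorem~\ref{P2-thm:InvGradedIdealCorresp} these are $G$-graded ideals, and they are nonzero since $\phi(I_k)=J_k\neq\{0\}$. The crux is to show $I_1 I_2=\{0\}$. Using Corollary~\ref{P2-cor:inv-SJ-JS}, write $I_1=SJ_1$ and $I_2=J_2S$; then $I_1 I_2 = SJ_1 J_2 S \subseteq S\{0\}S=\{0\}$. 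This is where the nearly-epsilon-strong hypothesis genuinely enters, via Corollary~\ref{P2-cor:inv-SJ-JS} (which in turn rests on $s$-unitality of $R$ and Lemma~\ref{P2-lemma:equal-graded}). With $I_1 I_2=\{0\}$ and both ideals nonzero and graded, graded primeness of $S$ is contradicted.

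The main obstacle is the bookkeeping around nonzero-ness and the identities $SJS=JS=SJ$: one must be careful that $\phi$ and $\psi$ really are mutually inverse on the nose (so that ``$I\neq\{0\}$'' transfers correctly in both directions) and that the rewriting $SJ_1J_2S$ is legitimate, i.e. that $SJ_1\cdot J_2 S$ equals $S(J_1J_2)S$ as subsets — this uses associativity and the fact that $J_1,J_2\subseteq R$ with $J_1J_2$ computed inside $R$ agreeing with the product computed inside $S$. None of this is deep, but it is the only place where a slip could occur; the conceptual content is entirely contained in the correspondence theorem and its corollary.
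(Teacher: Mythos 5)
Your proposal is correct and follows essentially the same route as the paper: both directions rest on the graded-ideal/$G$-invariant-ideal correspondence, with nonzeroness transferred via $\psi\circ\phi=\id$ (Lemma~\ref{P2-lemma:equal-graded}) and $\phi\circ\psi=\id$ (Lemma~\ref{P2-lemma:equal-inv}), and the key step $I_1I_2=SJ_1\cdot J_2S=S(J_1J_2)S$ via Corollary~\ref{P2-cor:inv-SJ-JS}, exactly as in the paper's argument. The only difference is that you phrase both directions contrapositively, which is immaterial.
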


\begin{proof} 
We first show the ``if'' statement.
Suppose that $\oplus_{e \in G_0}S_e$ is $G$-prime and let $I_1, I_2$ be nonzero $G$-graded ideals of $S.$ By Lemma~\ref{P2:lemma-inv-ideal} and Proposition~\ref{P2-equiv-nes}, $(I_1 \cap \oplus_{e \in G_0}S_e)$ and $(I_2 \cap \oplus_{e \in G_0}S_e)$ are nonzero $G$-invariant ideals of $\oplus_{e \in G_0}S_e.$ Then $\{0\} \neq (I_1 \cap \oplus_{e \in G_0}S_e)\cdot (I_2 \cap \oplus_{e \in G_0}S_e)\subseteq I_1 I_2.$

Now, we show the ``only if'' statement.
Suppose that $S$ is graded prime and let $J_1, J_2$ be nonzero $G$-invariant ideals of $\oplus_{e \in G_0}S_e.$ By
Proposition~\ref{P2-prop:NearlyProperties} (iii) and
Proposition~\ref{P2:prop-SJS-graded},  $SJ_1S$ and $SJ_2S$ are nonzero $G$-graded ideals of $S.$ By Corollary~\ref{P2-cor:inv-SJ-JS} and our assumption,
$SJ_1 \cdot J_2S = SJ_1S \cdot SJ_2S \neq \{0\}.$ Thus, $J_1\cdot J_2 \neq \{0\}$.
\end{proof}

Now, we determine some necessary conditions for graded primeness of a groupoid graded ring.

\begin{lema}\label{P2-lemmagraded1} Let $S$ be a $G$-graded ring which is $s$-unital. Suppose that $S$ is graded prime. 
Let $a_g \in S_g$ and $c_h \in S_h$ be nonzero elements, for some $g,h\in G$.
Then there is some $t\in G$ and $b_t \in S_t$ such that
$a_g b_t c_h$ is nonzero.
\end{lema}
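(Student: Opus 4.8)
The plan is to argue by contradiction, building two nonzero graded ideals of $S$ whose product vanishes. Suppose, for a contradiction, that $a_g b_t c_h = 0$ for every $t \in G$ and every $b_t \in S_t$; since every element of $S$ is a finite sum of homogeneous components, this is equivalent to saying $a_g S c_h = \{0\}$. The natural candidates for the offending ideals are the two-sided ideals generated by $a_g$ and by $c_h$ respectively, i.e. $I_1 := S a_g S$ and $I_2 := S c_h S$ (using $s$-unitality of $S$ one checks these are nonzero, since $a_g \in S a_g S$ and $c_h \in S c_h S$ by Proposition~\ref{P2:prop-s-unital}). The key point will be that $I_1 I_2 = S a_g S \cdot S c_h S \subseteq S a_g (S c_h S) = S (a_g S c_h) S = \{0\}$, using our standing assumption. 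So it remains only to verify that $I_1$ and $I_2$ are \emph{graded} ideals of $S$.

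First I would record the elementary fact that for any homogeneous element $d \in S_k$, the ideal $SdS$ is a graded ideal: indeed $SdS$ is spanned by products $x d y$ with $x \in S_p$, $y \in S_q$ homogeneous, and each such product lies in $S_{pkq}$ (or is zero when the relevant compositions fail to exist), so $SdS = \oplus_{l \in G}\bigl( (SdS) \cap S_l \bigr)$. Applying this with $d = a_g$ and $d = c_h$ shows $I_1, I_2$ are graded ideals. Then I would assemble the contradiction: $I_1$ and $I_2$ are nonzero graded ideals with $I_1 I_2 = \{0\}$, contradicting graded primeness of $S$. Hence there must exist $t \in G$ and $b_t \in S_t$ with $a_g b_t c_h \neq 0$.

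The only genuinely substantive steps are: (a) checking $a_g \in S a_g S$ and $c_h \in S c_h S$, which follows from $s$-unitality of $S$ via Proposition~\ref{P2:prop-s-unital} (pick an $s$-unit $u$ for $\{a_g\}$ on the left, so $a_g = u a_g \in S a_g$, and likewise on the right); and (b) the reduction of ``$a_g b_t c_h = 0$ for all homogeneous $b_t$'' to ``$a_g S c_h = \{0\}$'', which is immediate from bilinearity and the direct-sum decomposition $S = \oplus_{t \in G} S_t$. I do not anticipate any real obstacle here; the statement is a direct groupoid-graded analogue of a standard fact, and the main thing to be careful about is making sure the product $xdy$ of homogeneous elements is correctly placed in a single graded component (which it is, since if $(p,k)$ or $(pk,q)$ is not composable, the corresponding product is zero by the grading axiom).
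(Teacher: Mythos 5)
Your proposal is correct and follows essentially the same route as the paper: both form the ideals $Sa_gS$ and $Sc_hS$, use $s$-unitality of $S$ to see they are nonzero, observe they are graded since the generators are homogeneous, and conclude from $Sa_gS\cdot Sc_hS=\{0\}$ that $S$ is not graded prime. The only difference is cosmetic (contradiction versus contrapositive), and your extra verifications (that $a_g\in Sa_gS$ and that $SdS$ is graded for homogeneous $d$) simply spell out what the paper leaves implicit.
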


\begin{proof}
We prove the contrapositive statement. Suppose that $a_g b_t c_h = 0$ for all $t\in G$ and  $b_t \in S_t.$ Consider the sets $A:=Sa_gS$ and $C:=Sc_hS.$ By $s$-unitality of $S$ it is clear that both $A$ and $C$ are $G$-graded nonzero ideals of $S.$ Moreover, by assumption we have $AC = Sa_gS Sc_hS = Sa_g Sc_hS = \{0\}.$ This shows that $S$ is not graded prime.
\end{proof}

\begin{definition}\label{P2:def-support-hub}
Let $S$ be a $G$-graded ring.
An element $e\in G_0'$ (see Proposition~\ref{P2-prop:NearlyProperties}~(vi)) is said to be a \emph{support-hub} if
for every nonzero $a_g \in S_g$, with $g \in G$, there are $h,k \in G$ such that $s(h)=e,$ $r(k)=e$, and $a_gS_h$  and $S_ka_g$ are both nonzero.
\end{definition}

\begin{obs}
Let $S$ be a $G$-graded ring.
\begin{enumerate}[{\rm(a)}]
\item  Suppose that $e \in G_0'$ is a support-hub and that $a_g \in S_g$ is nonzero, for some $g\in G$.
Notice that there are $h,k \in G$ as in the following diagram. 

\[
\xymatrix{
& e \ar[dl]_h  \\
s(g) \ar[r]_g & r(g) \ar[u]_k 
}    
\]

\item Notice that,
if $S$ is a ring which is nearly epsilon-strongly graded by a group $G$, then the identity element $e$ of $G$ is always a support-hub.
\end{enumerate}
\end{obs}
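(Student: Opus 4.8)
The Remark has two parts, and both should be quick. For part (a), the plan is simply to unpack the definition of support-hub. Assuming $e \in G_0'$ is a support-hub and $a_g \in S_g$ is nonzero, Definition~\ref{P2:def-support-hub} immediately gives us $h,k \in G$ with $s(h) = e$, $r(k) = e$, and $a_g S_h$, $S_k a_g$ both nonzero. Since $a_g S_h$ is nonzero we must have $S_g S_h \neq \{0\}$, so the pair $(g,h)$ is composable, i.e. $s(g) = r(h)$; thus $h$ is an arrow from $e = s(h)$ to $r(h) = s(g) = d(g)$ in the paper's notation. Similarly $S_k a_g \neq \{0\}$ forces $(k,g)$ composable, so $s(k) = r(g) = c(g)$, giving $k$ an arrow from $c(g)$ to $e = r(k)$. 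This is exactly the diagram displayed in the Remark, so there is essentially nothing to prove beyond this observation — I would present it in one or two sentences.

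For part (b), the claim is that when $G$ is a group and $S$ is nearly epsilon-strongly $G$-graded, the identity $e$ of $G$ is a support-hub. The key point is that $G_0 = \{e\}$ in the group case, so if $G_0' \neq \emptyset$ then necessarily $G_0' = \{e\}$ and $S_e \neq \{0\}$; and if $S$ is nonzero then indeed $S_e \neq \{0\}$ by Proposition~\ref{P2-prop:NearlyProperties}~(i) together with $s$-unitality (a nonzero $s$-unital ring has nonzero identity component — more directly, $S = \oplus_g S_g$ and each $S_g = S_g S_{g^{-1}} S_g$, so if $S_g \neq \{0\}$ then $S_e \supseteq S_g S_{g^{-1}} \neq \{0\}$). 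Now take any nonzero $a_g \in S_g$. By Proposition~\ref{P2-equiv-nes} there exist $\epsilon_g(a_g) \in S_g S_{g^{-1}} \subseteq S_e$ and $\epsilon_g'(a_g) \in S_{g^{-1}} S_g \subseteq S_e$ with $\epsilon_g(a_g) a_g = a_g \epsilon_g'(a_g) = a_g \neq \{0\}$. Hence $a_g S_{g^{-1}} \ni$ (something whose product with appropriate element is nonzero) — more carefully, $a_g \epsilon_g'(a_g) = a_g \neq 0$ shows $a_g S_e \neq \{0\}$, but we want to land at $e$ via an arrow; taking $h := e$ we have $s(h) = e$ and $a_g S_h = a_g S_e \neq \{0\}$ since it contains $a_g$ (as $S_e$ is $s$-unital and acts unitally on $a_g$ — indeed $a_g \in a_g S_e$ by Proposition~\ref{P2-prop:NearlyProperties}~(ii), or directly $a_g = a_g \epsilon_g'(a_g) \in a_g S_e$). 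Similarly $S_k a_g \neq \{0\}$ with $k := e$, since $a_g = \epsilon_g(a_g) a_g \in S_e a_g$. So $h = k = e$ works, and $r(k) = e$ as required.

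The main (and only) subtlety I anticipate is the edge case $S = \{0\}$ in part (b): then $G_0' = \emptyset$ and the statement "$e$ is a support-hub" is vacuous because support-hubs are required to lie in $G_0'$ — so I would either note this explicitly or simply observe that the claim is only asserted when $G_0' \neq \emptyset$, in which case $S_e \neq \{0\}$ and the argument above applies. Beyond that, everything is a direct appeal to Proposition~\ref{P2-equiv-nes} and Proposition~\ref{P2-prop:NearlyProperties}, and no real calculation is needed. I would write the whole Remark's justification in a single short paragraph, since the content is essentially a reading of the definitions against the already-established structure theory.
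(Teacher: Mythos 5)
Your proposal is correct and follows exactly the route the paper intends: the paper states this as a remark without proof, and your argument is the direct unpacking — in (a), $a_gS_h\neq\{0\}$ and $S_ka_g\neq\{0\}$ force composability, giving $r(h)=s(g)$ and $s(k)=r(g)$, and in (b), $h=k=e$ works via Proposition~\ref{P2-equiv-nes} since $a_g=\epsilon_g(a_g)a_g=a_g\epsilon_g'(a_g)$ with $\epsilon_g(a_g),\epsilon_g'(a_g)\in S_e$. Your handling of the trivial case $S=\{0\}$ (where $G_0'=\emptyset$ and the claim is vacuous) is a reasonable extra precaution, not a deviation.
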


\begin{proposition}\label{P2-nearnec}Let $S$ be a $G$-graded ring which is $s$-unital.  If $S$ is graded prime, then every $e \in G_0'$ is a support-hub. 
\end{proposition}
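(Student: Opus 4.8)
The plan is to fix a nonzero homogeneous element $a_g \in S_g$ and produce, for an arbitrary $e \in G_0'$, morphisms $h, k \in G$ with $s(h) = e$, $r(k) = e$, and $a_g S_h \neq \{0\}$, $S_k a_g \neq \{0\}$. The natural starting point is the hypothesis $e \in G_0'$, which by definition of $G'$ means $S_e \neq \{0\}$; pick a nonzero $x_e \in S_e$. Now apply Lemma~\ref{P2-lemmagraded1} (which is available since $S$ is $s$-unital and graded prime) twice. First, with the pair $a_g \in S_g$ and $x_e \in S_e$: this yields some $t \in G$ and $b_t \in S_t$ such that $a_g b_t x_e \neq 0$. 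Symmetrically, applying Lemma~\ref{P2-lemmagraded1} to the pair $x_e \in S_e$ and $a_g \in S_g$ gives some $t' \in G$ and $b_{t'} \in S_{t'}$ with $x_e b_{t'} a_g \neq 0$.

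From $a_g b_t x_e \neq 0$ we immediately read off the required data for the ``right'' condition: since $a_g b_t x_e \in S_g S_t S_e$ is nonzero, in particular $a_g b_t \neq 0$, so $a_g S_t \neq \{0\}$, and moreover the composition $g t e$ must exist, forcing $s(t) = e$ (and $r(t) = s(g)$). Thus $h := t$ works: $s(h) = e$ and $a_g S_h \supseteq a_g S_t b_t x_e$-type products are nonzero — more carefully, $a_g b_t \neq 0$ witnesses $a_g S_h \neq \{0\}$. Dually, from $x_e b_{t'} a_g \neq 0$ we get that $b_{t'} a_g \neq 0$, hence $S_{t'} a_g \neq \{0\}$, and the composition $e t' g$ exists, forcing $r(t') = e$. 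So $k := t'$ satisfies $r(k) = e$ and $S_k a_g \neq \{0\}$. Since $g$ and $a_g$ were arbitrary, this shows $e$ is a support-hub, and since $e \in G_0'$ was arbitrary, every element of $G_0'$ is a support-hub.

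The only subtlety I anticipate is bookkeeping with the source/range maps: one must be careful that nonvanishing of a triple product like $a_g b_t x_e$ genuinely forces composability of the intermediate morphisms (this uses the grading axiom $S_p S_q = \{0\}$ when $(p,q) \notin G^2$), and hence pins down $s(t)$ and $r(t')$ to equal $e$. This is routine but is where the hypothesis $S_e \neq \{0\}$ (i.e. $e \in G_0'$ rather than just $e \in G_0$) is essential: without a nonzero element of $S_e$ to feed into Lemma~\ref{P2-lemmagraded1}, there would be nothing forcing the new morphisms to land at $e$. No deeper obstacle is expected; the argument is a direct two-fold application of the preceding lemma.
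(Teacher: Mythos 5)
Your proof is correct, and it is essentially the same ideal-theoretic argument as the paper's, but packaged differently. The paper proves the contrapositive from scratch: assuming $e\in G_0'$ fails to be a support-hub, it picks a nonzero $a_e\in S_e$ and a witnessing nonzero $a_g\in S_g$, forms the nonzero graded ideals $A=Sa_eS$ and $B=Sa_gS$ (using $s$-unitality), and checks that the failure of the right-hand (resp.\ left-hand) support condition forces $BA=Sa_gSa_eS=\{0\}$ (resp.\ $AB=\{0\}$), contradicting graded primeness. You instead argue directly, reusing Lemma~\ref{P2-lemmagraded1} as a black box: applying it to the pairs $(a_g,x_e)$ and $(x_e,a_g)$ gives nonzero products $a_gb_tx_e$ and $x_eb_{t'}a_g$, and the grading axiom ($S_pS_q=\{0\}$ when $(p,q)\notin G^2$) then pins down $s(t)=e$ and $r(t')=e$ while $a_gb_t\neq 0$ and $b_{t'}a_g\neq 0$ witness $a_gS_t\neq\{0\}$ and $S_{t'}a_g\neq\{0\}$; both conditions of Definition~\ref{P2:def-support-hub} are met. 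Since the lemma's own proof is exactly the paper's ideal construction, the two arguments coincide once unwound; what your version buys is modularity and brevity (the proposition becomes a two-line corollary of the lemma), while the paper's version is self-contained and makes explicit which product of graded ideals vanishes when the support-hub property fails. Both proofs use $e\in G_0'$ in the same essential way, namely to supply the nonzero element of $S_e$.
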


\begin{proof}
We prove the contrapositive statement.
Suppose that there is some $e \in G_0'$ which is not a support-hub. Then there are $g\in G$ and a nonzero element $a_g \in S_g$, such that for every $h\in G$ such that $s(h)=e,$ we have that $a_gS_h=\{0\}$ or for every $k\in G$ such that $r(k)=e,$ we have that $S_ka_g=\{0\}.$ Let $a_e$ be a nonzero element of $S_e.$ Using that $S$ is $s$-unital, $A=Sa_eS$ and $B=Sa_gS$ are nonzero $G$-graded ideals of $S.$

Notice that if for every $h\in G$ such that $s(h)=e,$ we have that $a_gS_h=\{0\},$ then $BA=Sa_gSSa_eS=Sa_gSa_eS=\{0\}.$ Moreover, if for every $k\in G$ such that $r(k)=e,$ we have that $S_ka_g=\{0\},$ then $AB=Sa_eSSa_gS=Sa_eSa_gS=\{0\}.$ Therefore, $S$ is not graded prime.
\end{proof}

\begin{proposition}\label{P2:prop-connected}
Let $S$ be a $G$-graded ring which is $s$-unital. The following assertions hold:
\begin{enumerate} [{\rm(i)}]
    \item If $G$ is a connected groupoid, then $G'$ is a connected subgroupoid of $G$.
    \item 
    If there is a support-hub in $G_0'$, then $G'$ is a connected subgroupoid of $G$.
    
    \item If $S$ is graded prime, then $G'$ is a connected subgroupoid of $G$.
\end{enumerate}
\end{proposition}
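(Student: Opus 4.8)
The plan is to prove all three implications by the same mechanism: showing that $G'$ is connected amounts to showing that any two objects $e,f \in G_0'$ are linked by a morphism in $G'$, i.e. by a morphism whose source and range both have nonzero homogeneous components. Since $G' = \{g \in G : S_{s(g)} \neq \{0\}, S_{r(g)} \neq \{0\}\}$, a morphism $g \in G$ with $s(g) = e$, $r(g) = f$ and $e,f \in G_0'$ automatically lies in $G'$; so the content is purely about the existence of \emph{some} morphism of $G$ between $e$ and $f$, combined with the constraint that we only care about objects in $G_0'$. For part (i), if $G$ itself is connected then for any $e,f \in G_0'$ there is $g \in G$ with $s(g)=e$, $r(g)=f$, and as just noted $g \in G'$; hence $G'$ is connected. (One should also note $G'$ is nonempty exactly when $G_0'$ is nonempty, and remark that $G'_0 = G_0'$, which is immediate from the definitions.)

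For part (ii), suppose $e_0 \in G_0'$ is a support-hub. First I would observe that it suffices to connect every $f \in G_0'$ to the fixed object $e_0$. Take $f \in G_0'$; then $S_f \neq \{0\}$, so pick a nonzero $a_f \in S_f$. Since $e_0$ is a support-hub, there exist $h,k \in G$ with $s(h) = e_0$, $r(k) = e_0$, and $a_f S_h \neq \{0\}$, $S_k a_f \neq \{0\}$. The first condition forces $a_f S_h \subseteq S_{fh}$ (so in particular $r(h) = f$ for the product to be nonzero, using the grading convention that non-composable products vanish), giving a morphism $h$ with $r(h) = f$, $s(h) = e_0$; equivalently $h^{-1}$ connects $e_0$ to $f$. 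Moreover $S_{s(h)} = S_{e_0} \neq \{0\}$ and $S_{r(h)} = S_f \neq \{0\}$, so $h \in G'$. Thus every $f \in G_0'$ is joined to $e_0$ inside $G'$, and $G'$ is connected.

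For part (iii), I would deduce it from (ii) via Proposition~\ref{P2-nearnec}: if $S$ is graded prime and $s$-unital, then \emph{every} $e \in G_0'$ is a support-hub, so as long as $G_0' \neq \emptyset$ we are in the situation of part (ii) and $G'$ is connected. The only edge case is $G_0' = \emptyset$, in which case $G' = \emptyset$ and is vacuously connected (or one may simply note that a graded prime ring is nonzero, forcing some $S_g \neq \{0\}$ and hence $G_0' \neq \emptyset$ by Proposition~\ref{P2-prop:NearlyProperties}~(vii)'s argument). I expect the main (minor) obstacle to be bookkeeping around the grading convention $S_g S_h = \{0\}$ for non-composable pairs: one must be careful to extract, from a hypothesis like $a_f S_h \neq \{0\}$, the genuine categorical fact that $h$ is composable with $a_f$'s degree, i.e. that $r(h) = f$; once that is in hand, the connectivity statements are essentially immediate from the definition of $G'$.
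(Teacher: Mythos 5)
Your proposal is correct and follows essentially the same route as the paper: (i) is the same one-line observation, (ii) uses the support-hub at a fixed $e_0\in G_0'$ to produce a morphism between $e_0$ and each object of $G_0'$ (the paper does this two-sidedly, connecting $f_1$ to $e$ via $S_k a_{f_1}\neq\{0\}$ and $e$ to $f_2$ via $b_{f_2}S_h\neq\{0\}$, then composing $t=hk$, whereas you use one side of the definition plus inverses — an equivalent argument), and (iii) is deduced from Proposition~\ref{P2-nearnec} together with (ii), exactly as in the paper. Your extra care about extracting $r(h)=f$ from $a_fS_h\neq\{0\}$ and about the degenerate case $G_0'=\emptyset$ is consistent with, and slightly more explicit than, the paper's treatment.
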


\begin{proof}
(i) 
Suppose that $G$ is connected.
Take $e,f \in G_0'.$ By assumption, there is $g \in G$ such that $s(g)=e$ and $r(g)=f.$ Since $S_e$ and $S_f$ are nonzero, we must have $g \in G'$, and hence $G'$ is connected.

(ii) 
Suppose that $e\in G_0'$ is a support-hub.
Take $f_1,f_2 \in G_0'.$ By the definition of $G',$ there are nonzero elements $a_{f_1} \in S_{f_1}$ and $b_{f_2} \in S_{f_2}.$ Since $e$ is a support-hub, there is some $k_1 \in G$ such that $r(k)=e$ and $S_ka_{f_1}\neq \{0\}.$ In particular, $s(k)=f_1.$ Using again that $e$ is a support-hub, there is some $h \in G$ such that $s(h)=e$ and $b_{f_2}S_h \neq \{0\}.$ Then, $r(h)=f_2.$
Define $t:=hk$ and note that $r(t)=f_2$ and $s(t)=f_1.$

(iii) It follows from Proposition~\ref{P2-nearnec} and (ii).
\end{proof}

\subsection{Primeness of groupoid graded rings}

In this section, we will provide necessary and sufficient conditions for  primeness of a nearly epsilon-strongly $G$-graded ring.
Furthermore, we will extend \cite[Thm.~1.3]{Lannstrom2021} to the context of groupoid graded rings. 

\begin{proposition}\label{P2-iso} 
Let $S$ be a nearly epsilon-strongly $G$-graded ring. If $S$ is prime, then $\oplus_{g \in G_e^e}S_g$ is prime for every $e \in G_0.$
\end{proposition}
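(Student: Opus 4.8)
The statement to prove is: if $S$ is a nearly epsilon-strongly $G$-graded ring and $S$ is prime, then $\oplus_{g \in G_e^e} S_g$ is prime for every $e \in G_0$. The natural strategy is to fix $e \in G_0$, set $T := \oplus_{g \in G_e^e} S_g$, and take nonzero ideals $\mathfrak{a}, \mathfrak{b}$ of $T$; I then want to ``inflate'' them to ideals of $S$ whose product is forced to be nonzero by primeness of $S$, and pull the conclusion back down to $T$. Note first that if $S_e = \{0\}$ then $T = \{0\}$ is vacuously prime (the empty ring has no nonzero ideals), so we may assume $S_e \neq \{0\}$, i.e.\ $e \in G_0'$; this also means $G_e^e$ is a subgroupoid of $G$ with a single object, hence effectively a group, and by Proposition~\ref{P2-prop:NearlyProperties}(iv)--(v), $T$ is itself a nearly epsilon-strongly $G_e^e$-graded ring and is $s$-unital.

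\textbf{Key steps.} First I would consider the ideals $I := S\mathfrak{a}S$ and $J := S\mathfrak{b}S$ of $S$. Since $S$ is $s$-unital (Proposition~\ref{P2-prop:NearlyProperties}(iii)) and $\mathfrak{a}, \mathfrak{b} \subseteq T \subseteq S$, these are nonzero ideals of $S$ — here I must check $I \neq \{0\}$, which follows because $\mathfrak{a} \subseteq S\mathfrak{a}$ and $\mathfrak{a} \subseteq \mathfrak{a}S$ by $s$-unitality of $S$ (or one can localize to $S_e$-level unitality). By primeness of $S$, $IJ = S\mathfrak{a}S\mathfrak{b}S \neq \{0\}$. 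The crux is then to extract from this a statement about $\mathfrak{a}$ and $\mathfrak{b}$ living inside $T$. The right tool is the projection $\pi := \pi_{G_e^e} : S \to T$ from Lemma~\ref{P2-lemmapi}, which is additive and satisfies $\pi(a x) = \pi(a) x$, $\pi(x a) = x \pi(a)$ for $x \in T$, $a \in S$. Using $\mathfrak{a} S \mathfrak{b} \neq \{0\}$ (which follows from $S\mathfrak{a}S\mathfrak{b}S \neq \{0\}$ together with $s$-unitality, since $S\mathfrak{a}S\mathfrak{b}S = SS_e\mathfrak{a}S\mathfrak{b}S_eS$ and the outer $S$'s and inner $S_e$'s can be absorbed), pick $a \in \mathfrak{a}$, $s \in S$, $b \in \mathfrak{b}$ with $asb \neq 0$. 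Then $\pi(asb)$ — hmm, this need not be nonzero for arbitrary $s$.

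So the more careful route is: $\mathfrak{a} S \mathfrak{b} \neq \{0\}$ means there are homogeneous components involved; decompose $s = \sum_{t \in G} s_t$ and note $a s_t b \in S_{g} S_t S_{h}$ for the relevant grades, landing in $S_{gth}$. Since $a \in \mathfrak{a} \subseteq S_e \oplus (\text{stuff in } G_e^e)$ — actually $a \in T$ so $a = \sum_{g \in G_e^e} a_g$, similarly $b$; then $a s_t b$ lands in $S_e$-related grades only when $t \in G_e^e$ (and more precisely the $G_e^e$-graded part of $asb$ is exactly $a \pi(s) b = \pi(asb)$ by Lemma~\ref{P2-lemmapi}). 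So I would instead argue: $\mathfrak{a} S \mathfrak{b} \neq \{0\}$ forces $\mathfrak{a} S_t \mathfrak{b} \neq \{0\}$ for some $t$, but since $\mathfrak{a}, \mathfrak{b}$ are supported in $G_e^e$ and $S_t$ contributes to the $G_e^e$-part of the product only via $\pi$, one gets $\mathfrak{a} \pi(S) \mathfrak{b} = \mathfrak{a} T \mathfrak{b} \neq \{0\}$. Since $\mathfrak{a}, \mathfrak{b}$ are ideals of $T$, $\mathfrak{a} T \mathfrak{b} \subseteq \mathfrak{a} \cap \mathfrak{b}$... no: $\mathfrak{a} T \subseteq \mathfrak{a}$ and $T \mathfrak{b} \subseteq \mathfrak{b}$, so $\mathfrak{a} T \mathfrak{b} \subseteq \mathfrak{a} \mathfrak{b}$. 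Hence $\mathfrak{a}\mathfrak{b} \neq \{0\}$, proving $T$ is prime.

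\textbf{Main obstacle.} The delicate point is the passage from $S\mathfrak{a}S\mathfrak{b}S \neq \{0\}$ to $\mathfrak{a}T\mathfrak{b} \neq \{0\}$: one must strip the outer $S$-factors using $s$-unitality and $G$-gradedness, and crucially observe that only the $G_e^e$-graded part of the middle factor $S$ can contribute to a nonzero element whose grade is compatible with being sandwiched between elements graded in $G_e^e$ — equivalently, that $\pi_{G_e^e}(\mathfrak{a} s \mathfrak{b}) = \mathfrak{a}\, \pi_{G_e^e}(s)\, \mathfrak{b}$ and that if $\mathfrak{a} s \mathfrak{b} \neq 0$ with $\mathfrak{a}, \mathfrak{b} \subseteq T$ then in fact its $T$-component is nonzero (since $S_g S_t S_h \subseteq S_{gth}$ with $g, h \in G_e^e$ forces $gth \in G_e^e$ iff $t \in G_e^e$, so the only homogeneous summands of $asb$ are those with $t \in G_e^e$, i.e.\ $asb = \pi(asb) = a\pi(s)b \in \mathfrak{a}T\mathfrak{b}$ automatically). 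That last observation — that $a S b \subseteq T$ whenever $a, b \in T$ and $a, b$ are homogeneous of grades in $G_e^e$ — is what makes everything collapse cleanly, and is the step I would write out most carefully.
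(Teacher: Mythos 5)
Your proposal is correct and follows essentially the same route as the paper: inflate the ideals $\mathfrak{a},\mathfrak{b}$ of $T=\oplus_{g\in G_e^e}S_g$ to the nonzero ideals $S\mathfrak{a}S$, $S\mathfrak{b}S$ of $S$ using $s$-unitality, and then observe that for homogeneous middle factors $s_t$ the product $a s_t b$ with $a,b$ supported in $G_e^e$ vanishes unless $t\in G_e^e$, giving $\mathfrak{a}S\mathfrak{b}\subseteq\mathfrak{a}T\mathfrak{b}\subseteq\mathfrak{a}\mathfrak{b}$ — which is exactly the paper's claim $ISJ\subseteq IJ$, the only cosmetic differences being that you argue directly instead of by contraposition and phrase the grading step via $\pi_{G_e^e}$.
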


\begin{proof} 
We prove the contrapositive statement. Let $e \in G_0.$ Suppose that $I,J$ are nonzero ideals of $\oplus_{g \in G_e^e}S_g$ such that $IJ=\{0\}$. 
By Proposition~\ref{P2-prop:NearlyProperties}, $S$ is $s$-unital and hence $A':= SIS$ and $B':=SJS$ are nonzero ideals of $S.$ 
Clearly,  $A'B'=SISSJS=SISJS.$ We claim that $ISJ \subseteq IJ.$ 
If we assume that the claim holds, then it follows that $A'B'=\{0\}$, and we are done. 
Now we show the claim.
Take $g\in G$, $c_g \in S_g,$
$a=\sum_{k \in G_e^e} a_k \in I,$ 
and $b=\sum_{t\in G_e^e} b_t \in J$.
Let $k, t \in G_e^e.$ If $e=s(k)\neq r(g)$ or $s(g) \neq r(t)=e$, then $a_kc_gb_t=0 \in IJ.$ Otherwise, $g \in G_e^e,$ and then, since $I$ and $J$ are ideals of $\oplus_{g \in G_e^e}S_g$, we get that $ac_gb \in IJ.$
Thus, $ISJ \subseteq IJ$. 
\end{proof}

\begin{obs} 
Let $S$ be a nearly epsilon-strongly $G$-graded ring.  By Proposition~\ref{P2-iso} and Proposition~\ref{P2:prop-connected} (iii),  if $S$ is prime, then $\oplus_{g \in G_e^e}S_g$ is prime for every $e \in G_0$ and $G'$ is connected. The converse, however,  
need not hold
as shown in Example~\ref{P2:ex-partial-connected-not-prime}.
\end{obs}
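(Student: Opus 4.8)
The plan is to assemble the two cited propositions and supply the one small logical bridge they leave implicit. First I would invoke Proposition~\ref{P2-prop:NearlyProperties}~(iii) to record that $S$ is $s$-unital, since that is the standing hypothesis under which Proposition~\ref{P2:prop-connected} is stated. The assertion that $\oplus_{g \in G_e^e}S_g$ is prime for every $e \in G_0$ is then \emph{exactly} the conclusion of Proposition~\ref{P2-iso} under the hypothesis that $S$ is prime, so this half of the remark requires no further argument beyond citation.

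For the connectedness of $G'$, the only subtlety is that Proposition~\ref{P2:prop-connected}~(iii) is phrased in terms of \emph{graded} primeness rather than primeness, so I must pass from one to the other. I would close this gap with the elementary observation that primeness implies graded primeness: if $I,J$ were nonzero graded ideals of $S$ with $IJ=\{0\}$, then in particular they would be nonzero (ordinary) ideals of $S$ with zero product, contradicting the hypothesis that $S$ is prime. Hence $S$ is graded prime, and since $S$ is $s$-unital, Proposition~\ref{P2:prop-connected}~(iii) then yields that $G'$ is a connected subgroupoid of $G$. This trivial implication ``prime $\Rightarrow$ graded prime'' is the sole piece of genuine content in deriving the forward direction, as it is what licenses feeding primeness into the graded-prime hypothesis of Proposition~\ref{P2:prop-connected}~(iii).

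The failure of the converse is not something to be established by argument here; it is exhibited by the forthcoming Example~\ref{P2:ex-partial-connected-not-prime}, which produces a nearly epsilon-strongly $G$-graded ring $S$ with $G'$ connected and with every isotropy corner $\oplus_{g \in G_e^e}S_g$ prime, yet with $S$ itself not prime. Thus the remark presents no real obstacle: both affirmative conclusions follow by direct appeal to Propositions~\ref{P2-iso} and \ref{P2:prop-connected}~(iii), and the negative clause is a pointer to a concrete counterexample rather than a statement in need of proof.
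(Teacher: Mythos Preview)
Your proposal is correct and matches the paper's approach exactly: the remark in the paper carries no separate proof, as it is intended to be read off directly from the two cited propositions, with the trivial implication ``prime $\Rightarrow$ graded prime'' and the $s$-unitality from Proposition~\ref{P2-prop:NearlyProperties}~(iii) being precisely the implicit bridges you have made explicit. There is nothing to add or correct.
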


The next result partially generalizes \cite[Lem.~2.19]{Lannstrom2021}.

\begin{lema}\label{P2-lema-support-hub}
Let $S$ be a nearly epsilon-strongly $G$-graded ring and let $I$ be a nonzero ideal of $S$.
If $e \in G_0'$ is a support-hub, 
then  $\pi_{G_e^e}(I)$ is a nonzero ideal of $\oplus_{g \in G_e^e}S_g.$
\end{lema}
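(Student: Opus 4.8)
The plan is to show that $\pi_{G_e^e}(I)$ is (a) an ideal of $R := \oplus_{g \in G_e^e} S_g$ and (b) nonzero, using the projection $\pi_H$ from Lemma~\ref{P2-lemmapi} with $H = G_e^e$.

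\textbf{The ideal property.} First I would observe that $\pi_{G_e^e}(I)$ is an additive subgroup of $R$, which is immediate from additivity of $\pi_{G_e^e}$ (Lemma~\ref{P2-lemmapi}(i)) and the fact that $I$ is closed under addition. For the absorption property, take $a \in I$ and $b \in R = S_{G_e^e}$. Since $I$ is an ideal of $S$ and $b \in S \supseteq R$, we have $ba \in I$ and $ab \in I$. By Lemma~\ref{P2-lemmapi}(ii), $\pi_{G_e^e}(ba) = b\,\pi_{G_e^e}(a)$ and $\pi_{G_e^e}(ab) = \pi_{G_e^e}(a)\,b$, and both of these lie in $\pi_{G_e^e}(I)$. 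Hence $R \cdot \pi_{G_e^e}(I) \subseteq \pi_{G_e^e}(I)$ and $\pi_{G_e^e}(I) \cdot R \subseteq \pi_{G_e^e}(I)$, so $\pi_{G_e^e}(I)$ is an ideal of $R$.

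\textbf{Nonvanishing.} This is the crux. Since $I \neq \{0\}$, pick a nonzero $x \in I$ and a nonzero homogeneous component $a_g \in S_g \cap \Supp$-piece of $x$, i.e. write $x = \sum_{t} x_t$ with some $x_g = a_g \neq 0$. Because $e$ is a support-hub, there exist $h, k \in G$ with $s(h) = e$, $r(k) = e$, and $a_g S_h \neq \{0\}$ and $S_k a_g \neq \{0\}$; note then $r(h) = s(g)$ and $s(k) = r(g)$, so that $k g h \in G_e^e$. The idea is to multiply $x$ on the left by a suitable element of $S_k$ and on the right by a suitable element of $S_h$ to land a nonzero homogeneous element in $I \cap S_{kgh} \subseteq R$, which would force $\pi_{G_e^e}(I) \neq \{0\}$. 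The subtlety is that a single pair $(u_k, v_h)$ with $u_k a_g v_h \neq 0$ need not exist a priori even though $S_k a_g \neq 0$ and $a_g S_h \neq 0$ separately; here I would invoke nearly epsilon-strong gradedness via Lemma~\ref{P2-lemmagraded1} (the ring $S$ is $s$-unital and, if we are in the graded prime case, graded prime) — or, more carefully, argue directly: using Proposition~\ref{P2-equiv-nes} choose $\epsilon'_g(a_g) \in S_{g^{-1}}S_g$ with $a_g \epsilon'_g(a_g) = a_g$, so that for any $v_h \in S_h$, $a_g v_h = a_g \epsilon'_g(a_g) v_h$; combining with an analogous left-hand identity and the support-hub hypothesis, one produces $u_k \in S_k$, $v_h \in S_h$ with $u_k a_g v_h \neq 0$.

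\textbf{Main obstacle.} I expect the genuine difficulty to be exactly the last point: passing from the two separate non-annihilation conditions in the support-hub definition ($a_g S_h \neq \{0\}$ and $S_k a_g \neq \{0\}$) to the simultaneous condition $S_k a_g S_h \neq \{0\}$, and then checking that the resulting nonzero element $u_k a_g v_h$ actually survives inside $\pi_{G_e^e}(I)$ — that is, that no cancellation with other homogeneous components of $u_k x v_h$ can occur. Since $u_k x v_h \in I$ and its $S_{kgh}$-component equals $u_k a_g v_h$ (components in distinct graded pieces cannot interfere), applying $\pi_{G_e^e}$ extracts precisely this nonzero element, so $\pi_{G_e^e}(I)$ is nonzero. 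I would handle the simultaneity issue by the $s$-unital/nearly-epsilon-strong machinery already developed (Propositions~\ref{P2-equiv-nes} and \ref{P2-prop:NearlyProperties}) together with Lemma~\ref{P2-lemmagraded1}, which is the natural tool and is cited as "partially generalizing \cite[Lemma~2.19]{Lannstrom2021}".
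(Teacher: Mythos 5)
Your treatment of the ideal property via Lemma~\ref{P2-lemmapi} matches the paper, and you have correctly located the crux, but you have not closed it: producing $u_k\in S_k$, $v_h\in S_h$ with $u_k a_g v_h\neq 0$ is exactly the missing step, and neither of your proposed remedies works as stated. Lemma~\ref{P2-lemmagraded1} is unavailable because graded primeness of $S$ is \emph{not} a hypothesis of the present lemma (the lemma is the engine behind Theorem~\ref{P2-nearsuf}, which is what eventually yields primeness, so assuming any form of primeness here would be circular), and your ``direct'' argument via $\epsilon'_g(a_g)$ only rewrites $a_g v_h$ as $a_g\epsilon'_g(a_g)v_h$; it gives no control over left multiplication by $S_k$, and in general the two separate conditions $S_k a_g\neq\{0\}$ and $a_g S_h\neq\{0\}$, for the fixed $h,k$ supplied by Definition~\ref{P2:def-support-hub}, do not imply $S_k a_g S_h\neq\{0\}$.

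The paper sidesteps the simultaneity problem entirely by applying the support-hub property twice, to two \emph{different} elements: first choose $k$ with $r(k)=e$ and $b_k\in S_k$ such that $b_k d_{g_1}\neq 0$; then apply Definition~\ref{P2:def-support-hub} again to the new nonzero homogeneous element $b_k d_{g_1}\in S_{kg_1}$, obtaining some $l$ with $s(l)=e$ and $b_l\in S_l$ such that $(b_k d_{g_1})b_l\neq 0$. There is no need to use the same right-hand morphism $h$ that the hub property gives for $d_{g_1}$ itself. (The paper additionally first multiplies $d$ on the right by a suitable nonzero $c_{g_1^{-1}}\in S_{g_1^{-1}}$, via Proposition~\ref{P2-equiv-nes}, so as to assume $g_1\in G_0'$.) Your final observation --- that the relevant homogeneous component of $b_k d b_l\in I$ is exactly $b_k d_{g_1} b_l$, since $kg_il=kg_1l$ forces $g_i=g_1$ by cancellation in the groupoid --- is correct and agrees with the paper's conclusion; it is only the existence of the pair $(b_k,b_l)$ that your argument leaves unproved, and that existence is the real content of the lemma.
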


\begin{proof}
Suppose that $e\in G_0'$ is a support-hub.
By Lemma~\ref{P2-lemmapi}, $\pi_{G_e^e}(I)$ is an ideal of $\oplus_{g \in G_e^e}S_g.$
We claim that $\pi_{G_e^e}(I)\neq \{0\}.$ Let $d= d_{g_1} + d_{g_2} + \ldots + d_{g_n} \in I$ be an element where all the homogeneous coefficients are nonzero and the $g_i's$ are distinct. By Proposition~\ref{P2-equiv-nes}, there is some nonzero $c_{g_1^{-1}} \in S_{g_1^{-1}}$ such that $d_{g_1} c_{g_1^{-1}}$ is nonzero and contained in $S_{r(g_1)}$.

Notice that $d c_{g_1^{-1}}$ is nonzero and contained in $I$. Thus, without loss of generality, we may assume that $g_1 \in G_0'$. Since $e$ is a support-hub, there is an element $k \in G$ such that $r(k)=e$ and  $S_kd_{g_1}$ is nonzero. In particular, there is an element $b_k \in S_k$ such that 
$b_k d_{g_1}$ 
is nonzero. Using again that $e$ is a support-hub, there is an element $h \in G$ such that $s(h)=e$ and $(b_k d_{g_1}) S_h$ is nonzero. Therefore, there is an element $b_h \in S_h$ such that $(b_k d_{g_1})b_h$ is nonzero. Thus, $b_k d b_h \in I$ and $$\pi_{G_e^e}(b_k d b_h)= \sum\limits_{\substack{s(k)=r(g_i) \\ s(g_i)=r(h)}}b_k d_{g_i} b_h= \sum\limits_{\substack{g_1=r(g_i) \\ s(g_i)=g_1}}b_k d_{g_i} b_h.$$
Notice that 
$b_k d_{g_i} b_h \in S_{kh} \setminus \{0\}$ if, and only if, $g_i=g_1.$ Thus, $0 \neq \pi_{G_e^e}(b_k d b_h) \in \pi_{G_e^e}(I).$
\end{proof}

\begin{theorem}\label{P2-nearsuf}
Let $S$ be a nearly epsilon-strongly $G$-graded ring. If there is some $e \in G_0'$ such that $e$ is a support-hub 
and $\oplus_{g \in G_e^e}S_g$ is prime, 
then $S$ is prime.
\end{theorem}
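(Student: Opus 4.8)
The plan is to prove the contrapositive in the natural direction compatible with the machinery already built: assume $S$ is \emph{not} prime and produce nonzero ideals of $\oplus_{g\in G_e^e}S_g$ with zero product. So suppose $A,B$ are nonzero ideals of $S$ with $AB=\{0\}$. The idea is to push $A$ and $B$ down to the isotropy ring $\oplus_{g\in G_e^e}S_g$ via the projection $\pi_{G_e^e}$ and show the images are nonzero ideals whose product vanishes. That $\pi_{G_e^e}(A)$ and $\pi_{G_e^e}(B)$ are ideals of $\oplus_{g\in G_e^e}S_g$ is immediate from Lemma~\ref{P2-lemmapi}(ii), and that they are \emph{nonzero} is exactly the content of Lemma~\ref{P2-lema-support-hub}, using that $e$ is a support-hub and that $S$ is $s$-unital (Proposition~\ref{P2-prop:NearlyProperties}(iii)). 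So the only real work is to show $\pi_{G_e^e}(A)\,\pi_{G_e^e}(B)=\{0\}$.

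For that step, first I would \emph{not} work directly with $A$ and $B$ but replace them by the graded ideals they generate, or rather observe the following cleaner route: since $AB=\{0\}$, also $(SAS)(SBS)\subseteq SA(SBS)\cdot\ldots$ — more carefully, $SASBS = SA(SBS)$, and $A\cdot SBS$ need not vanish, so this needs the same trick used in Proposition~\ref{P2-iso}. The honest approach is: for $a\in A$, $b\in B$ and arbitrary $c\in S$, consider $\pi_{G_e^e}(a)\,\pi_{G_e^e}(b)$ and expand it in homogeneous components. A product $a_k b_t$ with $k,t\in G_e^e$ is the degree-$kt$ component of $ab$ (all cross terms with other degrees, when they land in $S_{kt}$, would have to come from composable pairs $(k',t')$ with $k't'=kt$, $s(k')=r(t')=e$, forcing $k',t'\in G_e^e$ as well). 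Thus $\pi_{G_e^e}(ab)=\pi_{G_e^e}(a)\pi_{G_e^e}(b)$ provided one knows $ab$'s homogeneous component of degree $kt$ equals the sum of $a_{k'}b_{t'}$ over such pairs — which is true because $S$ is graded. Since $ab=0$ for all $a\in A$, $b\in B$, we get $\pi_{G_e^e}(a)\pi_{G_e^e}(b)=\pi_{G_e^e}(ab)=0$, hence $\pi_{G_e^e}(A)\pi_{G_e^e}(B)=\{0\}$. This contradicts primeness of $\oplus_{g\in G_e^e}S_g$.

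Let me restate the key identity I rely on, since it is the crux: for $a=\sum_g a_g$, $b=\sum_h b_h$ in $S$, the degree-$w$ component of $ab$ (for $w\in G$) is $\sum_{(g,h)\in G^2,\ gh=w} a_g b_h$; in particular, taking $w\in G_e^e$, every composable pair $(g,h)$ with $gh=w$ has $r(g)=r(w)=e$ and $s(h)=s(w)=e$ and $s(g)=r(h)$, but that common object need \emph{not} be $e$, so $g,h$ need not lie in $G_e^e$. Hence $\pi_{G_e^e}(ab)$ is \emph{not} literally $\pi_{G_e^e}(a)\pi_{G_e^e}(b)$ in general. This is the main obstacle, and it is resolved exactly as in Proposition~\ref{P2-iso}: instead of $AB$ I should form $A':=SAS$, $B':=SBS$, note they are nonzero (by $s$-unitality) ideals of $S$ with $A'B'=SASBS$, and then the claim ``$A'\cdot(\text{stuff})\cdot B'$ restricted via $\pi_{G_e^e}$ lands inside $\pi_{G_e^e}(A')\pi_{G_e^e}(B')$'' works because the $S$-factors on the outside absorb the ``detour'' objects. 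Concretely, mimic the claim $ISJ\subseteq IJ$ from Proposition~\ref{P2-iso}'s proof: for $x\in A'$, $c\in S$, $y\in B'$ homogeneous of degrees $g,\gamma,h$ with $g,h\in G_e^e$, the product $xcy$ has degree $g\gamma h$, and this lies in $G_e^e$ iff $\gamma\in G_e^e$ and the composites match, in which case $g\gamma h$ genuinely factors \emph{through} $e$; combined with $A'$, $B'$ being two-sided ideals this forces the relevant pieces back into $A'$ and $B'$ and the vanishing $A'B'=\{0\}$ finishes it. So the proof structure is: (1) replace $A,B$ by $SAS$, $SBS$; (2) apply Lemma~\ref{P2-lema-support-hub} to get $\pi_{G_e^e}(SAS)$, $\pi_{G_e^e}(SBS)$ nonzero; (3) run the Proposition~\ref{P2-iso}-style homogeneous-degree bookkeeping to get $\pi_{G_e^e}(SAS)\cdot\pi_{G_e^e}(SBS)=\{0\}$; (4) contradict primeness of the isotropy ring. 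The delicate point to write carefully is step (3), exactly because of the ``detour object'' phenomenon noted above; everything else is quotation of earlier results.
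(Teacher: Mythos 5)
Your steps (2) and (4) are fine and coincide with the paper's use of Lemma~\ref{P2-lema-support-hub} and primeness of $\oplus_{g\in G_e^e}S_g$, but the crux, your step (3), has a genuine gap. You correctly observe that $\pi_{G_e^e}(ab)\neq\pi_{G_e^e}(a)\pi_{G_e^e}(b)$ in general because of ``detour'' pairs $(g,h)$ with $gh\in G_e^e$ but $g,h\notin G_e^e$; however, the proposed repair does not close this gap. Passing from $A,B$ to $A':=SAS$, $B':=SBS$ changes nothing (by $s$-unitality these are just $A$ and $B$ again, or at least ideals with the same defect), and the Proposition~\ref{P2-iso}-style bookkeeping does not transfer: in that proposition the ideals $I,J$ live inside $\oplus_{g\in G_e^e}S_g$, so every element of $I$ or $J$ is supported in $G_e^e$ and the middle factor is forced into $G_e^e$, whereas here $A'$ and $B'$ are ideals of $S$ that need not be graded, so the homogeneous components $x_g$ ($g\in G_e^e$) of an element $x\in A'$ need not belong to $A'$, and you cannot invoke $A'B'=\{0\}$ on them. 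Consequently nothing in your sketch shows $\pi_{G_e^e}(A)\pi_{G_e^e}(B)=\{0\}$; the statement ``the $S$-factors on the outside absorb the detour objects'' is exactly the unproved point.

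The missing idea, which is how the paper argues, is the containment $\pi_{G_e^e}(I)\subseteq I$ for \emph{every} ideal $I$ of $S$. Given $b=\sum_t b_t\in I$, the nearly epsilon-strong hypothesis (Proposition~\ref{P2-equiv-nes}) together with $s$-unitality of $S_e$ (Proposition~\ref{P2-prop:NearlyProperties}) produces elements $u_e,v_e\in S_e$ with $u_e b_t=b_t$ for all $t\in\Supp(b)$ with $r(t)=e$ (while $u_e b_t=0$ when $r(t)\neq e$, since $S_eS_t=\{0\}$) and $b_t v_e=b_t$ for $t\in G_e^e$, so that $u_e b v_e=\pi_{G_e^e}(b)\in I$. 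Once you have $\pi_{G_e^e}(I)\subseteq I$ and $\pi_{G_e^e}(J)\subseteq J$, the paper concludes directly: $\{0\}\neq\pi_{G_e^e}(I)\pi_{G_e^e}(J)\subseteq IJ$; and your contrapositive version would also follow at once, since $AB=\{0\}$ would force $\pi_{G_e^e}(A)\pi_{G_e^e}(B)=\{0\}$, contradicting primeness of the isotropy ring. Without this containment (or some substitute realizing $\pi_{G_e^e}$ by multiplication with elements of $S$), your argument does not go through.
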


\begin{proof}
Suppose that $e\in G_0'$ is a support-hub and that $\oplus_{g \in G_e^e}S_g$ is prime.
Let $I$ and $J$ be nonzero ideals of $S.$ By Lemma~\ref{P2-lema-support-hub}, $\pi_{G_e^e}(I)$ and $\pi_{G_e^e}(J)$ are nonzero ideals of $\oplus_{g \in G_e^e}S_g,$ and hence, by assumption, $\pi_{G_e^e}(I)\pi_{G_e^e}(J) \neq \{0\}.$

We claim that $\pi_{G_e^e}(I) \subseteq I.$ Let $d=\sum_{t \in G}d_t \in I$ and consider the finite set $F:=\{t \in \Supp(d): r(t)=e \}.$ 
Take $t\in F$.
Using that $S_t=S_tS_{t^{-1}} S_t,$ there are $n_t\in \mathbb{Z}^+$, $a_1^t,\ldots,a_{n_t}^t \in S_{t}S_{t^{-1}}\subseteq S_{r(t)}$ and $b_1^t, \ldots, b_{n_t}^t \in S_t$ such that
$d_t=\sum_{i=1}^{n_t} a_i^tb_i^t.$
Since $S_{r(t)}=S_e$ is $s$-unital, there is some $u_e \in S_{r(t)}$ such that $u_ea_i^t=a_i^t$ for all $i\in \{1,\ldots,n_t\}.$ Thus, $d_t=u_ed_t$.
Therefore,
$u_e\sum_{t \in G}d_t= \sum_{t\in F}u_e d_t=\sum_{t\in F}d_t.$

Using a similar argument, there is some $v_e \in S_e$ such that $d_t=d_t v_e$ for every $t\in \Supp(d)$ such that $s(t)=e$.
Therefore,
$$I \ni \left(u_e\sum\limits_{t \in G}d_t\right)v_e=\left(\sum\limits_{t\in F}d_t\right)v_e=\sum\limits_{t \in G_e^e}d_tv_e=\sum\limits_{t \in G_e^e}d_t = \pi_{G_e^e}(d).$$
Analogously, $\pi_{G_e^e}(J) \subseteq J.$ Thus, $\{0\} \neq \pi_{G_e^e}(I)\pi_{G_e^e}(J) \subseteq IJ$ and $S$ is prime.  
\end{proof}

\begin{obs}
The assumption on the existence of a support-hub in Theorem~\ref{P2-nearsuf} cannot be dropped.
Indeed, consider the groupoid $G=\{e,f\}=G_0$ and the groupoid ring $S:=\mathbb{C}[G]$.
Then $G_e^e = \{e\}$ and $G_f^f=\{f\}$.
Furthermore, $S_e \cong \mathbb{C}$ and $S_f \cong \mathbb{C}$ are both prime.
Nevertheless, $S$ is not prime.
\end{obs}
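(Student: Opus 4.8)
The final statement is a remark asserting that the example at hand is a genuine counterexample: a nearly epsilon-strongly $G$-graded ring all of whose isotropy group rings are prime, yet which itself is not prime, the failure being precisely the absence of a support-hub. The plan is therefore to verify each of the stated claims and to confirm that every hypothesis of Theorem~\ref{P2-nearsuf} \emph{except} the support-hub condition is satisfied, so that the example truly isolates that hypothesis.

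First I would unwind the groupoid ring structure. Since $G=G_0=\{e,f\}$ consists only of the two identity morphisms, $S=\mathbb{C}[G]$ has $\mathbb{C}$-basis $\{e,f\}$, with homogeneous components $S_e=\mathbb{C}e$ and $S_f=\mathbb{C}f$. The only composable pairs are $(e,e)$ and $(f,f)$, so $e^2=e$, $f^2=f$ and $ef=fe=0$; hence $S\cong\mathbb{C}\times\mathbb{C}$ as a ring. This grading is nearly epsilon-strongly graded: for $g\in\{e,f\}$ one has $g^{-1}=g$, so $S_gS_{g^{-1}}=S_gS_g=S_g\cong\mathbb{C}$ is a field and thus $s$-unital, and $S_gS_{g^{-1}}S_g=S_g$. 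Because the identities are the only morphisms, the isotropy groups are trivial, $G_e^e=\{e\}$ and $G_f^f=\{f\}$, and therefore $\oplus_{g\in G_e^e}S_g=S_e\cong\mathbb{C}$ and $\oplus_{g\in G_f^f}S_g=S_f\cong\mathbb{C}$; a field is prime, which settles those claims.

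Next I would record that $G_0'=\{e,f\}$, since both $S_e$ and $S_f$ are nonzero (see the definition of $G'$ in Proposition~\ref{P2-prop:NearlyProperties}~(vi)). In particular the weakened hypothesis ``for some $e\in G_0'$, $\oplus_{g\in G_e^e}S_g$ is prime'' holds. To show that $S$ is nevertheless not prime, I would exhibit the nonzero ideals $I:=S_e$ and $J:=S_f$: each is an ideal because $SS_e=(S_e\oplus S_f)S_e=S_e$ and $S_eS=S_e$ (using $S_fS_e=S_eS_f=\{0\}$), and symmetrically for $S_f$; moreover $IJ=S_eS_f=\{0\}$. Thus $S$ has two nonzero ideals with zero product, and so is not prime.

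Finally, to make transparent that the support-hub hypothesis is exactly what fails, I would check that neither object is a support-hub in the sense of Definition~\ref{P2:def-support-hub}. Testing $e$ against the nonzero element $f\in S_f$: the only $h\in G$ with $s(h)=e$ is $h=e$, and $fS_e=S_fS_e=\{0\}$, so the required condition fails; by symmetry $f$ is not a support-hub either. There is no real obstacle here, as the whole argument is a direct verification; the only point needing care is to confirm that all hypotheses of Theorem~\ref{P2-nearsuf} other than the existence of a support-hub genuinely hold, so that the example legitimately exhibits that hypothesis as indispensable.
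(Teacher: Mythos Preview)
Your verification is correct and complete. The paper itself does not supply a proof for this remark; it simply states the claims and leaves the routine checks (that $S\cong\mathbb{C}\times\mathbb{C}$, that the isotropy pieces are fields, and that $S$ is not prime) to the reader, so your write-up is exactly the kind of unpacking the remark invites. Your additional explicit check that neither $e$ nor $f$ is a support-hub is a nice touch that the paper omits but that makes the point of the remark fully transparent.
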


\begin{example}\label{P2-exnotgraded}
Let $G:=\{f_1, f_2, f_3, g, h, g^{-1}, h^{-1}, hg^{-1},gh^{-1}\}$ be a groupoid with $G_0=\{f_1, f_2, f_3\}$ and depicted as follows:
\[
\xymatrixcolsep{4pc}
\xymatrix{{\bullet} \ar@(dl,ul)^{f_1} \ar@/^3.4pc/[rr]^{hg^{-1}} \ar@/^1.45pc/[r]^{g^{-1}} & {\bullet} \ar@(dl,ul)^{f_2} \ar@/^1.2pc/[l]^g \ar@/^1.2pc/[r]^h &
{\bullet} \ar@(dr,ur)_{f_3} \ar@/^1.2pc/[l]^{h^{-1}} \ar@/^3.4pc/[ll]^{gh^{-1}}} 
\]
Define $S$ as the ring of matrices over $\mathbb{Z}$ of the form
$$\begin{pmatrix}
a_{11} & a_{12} & 0       & 0 \\
a_{21} & a_{22} & 0       & 0 \\
  0    &   0    & a_{33}  & a_{34} \\
  0    &   0    & a_{43}  & a_{44}
\end{pmatrix}.$$
Denote by $\{e_{ij}\}_{i,j},$ the canonical basis of $S$ and define: 
$$ S_g:=\mathbb{Z} e_{12}, \hspace{1cm} S_{g^{-1}}:=\mathbb{Z} e_{21},
\hspace{1cm} S_h:=\mathbb{Z} e_{43}, \hspace{1cm}
S_{h^{-1}}:= \mathbb{Z} e_{34},$$ $$S_{f_1}:= \mathbb{Z} e_{11},
\hspace{1cm}  S_{f_3}:= \mathbb{Z} e_{44}, \hspace{1cm} S_{f_2}:=\{ \lambda_1 e_{22} + \lambda_2 e_{33} : \lambda_1, \lambda_2 \in \mathbb{Z}\},$$ 
and $S_l:=\{0\},$ otherwise. Notice that $S=\oplus_{l \in G}S_l$ and that $f_2 \in G_0'$ is a support-hub.
However, observe that $e_{12} \in S_g$ and $e_{43}\in S_h$ are nonzero elements, and that there is no element $a_l \in S_l$ such that $l \in G'$ and $e_{12}a_le_{43}\neq 0.$ Therefore, by Lemma~\ref{P2-lemmagraded1}, $S$ is not graded prime.
\end{example}

Now, we prove our main result.

\begin{proof}[Proof of Theorem~\ref{P2-teo-nesgr}] 
It follows from Proposition~\ref{P2-iso} and by the definition of primeness that (i) $\Rightarrow$ (iv) $\Rightarrow$ (v). By Proposition~\ref{P2-prop:NearlyProperties}~(iii), $S$ is $s$-unital and Proposition~\ref{P2-nearnec} implies (v) $\Rightarrow$ (vii).
By Theorem~\ref{P2-nearsuf}, (vii) $\Rightarrow$ (i). 
Finally, note that by Theorem~\ref{P2-teo:graded-G-prime}, (ii) is equivalent to (iv), and (iii) is equivalent to (v).
\end{proof}

\begin{obs}
In \cite{Munn2000}, Munn investigates primeness of rings graded by inverse semigroups.
He shows (see \cite[Thm.~4.1]{Munn2000}) that if
$S$ is a so-called $0$-bisimple inverse semigroup,
$R$ is a faithful restricted $S$-graded ring, and $R_G$ is prime for some nonzero maximal subgroup $G$ of $S$,
then $R$ is prime.
We point out that Munn's theorem can potentially be used to prove e.g. (vi)$\Rightarrow$(i) in Theorem~\ref{P2-teo-nesgr}.
Indeed, we may associate a natural inverse semigroup $S(G):=G \cup \{z\}$ with the groupoid $G$ and view any $G$-graded ring as an $S(G)$-graded ring (see e.g. \cite[Sec.~4.3]{Lannstrom2022}).
It is easy to come up with examples of prime nearly-epsilon strongly $G$-graded rings
such that the corresponding $S(G)$-gradings fail to satisfy the requirements in Munn's theorem.
However, given a prime nearly epsilon-strongly $G$-graded ring $R$, it is not clear to the authors whether one can always find a subgroupoid $H$ of $G$, contained in $\Supp(R)$, such that
$S(H)$ and its grading on $R$ do in fact satisfy the requirements in Munn's theorem.
\end{obs}

We recall that Passman \cite{Passman1984} provided a characterization of prime unital strongly group graded rings. That result was generalized in \cite[Thm.~1.3]{Lannstrom2021} to nearly epsilon-strongly group graded rings. 

\begin{theorem}[{\cite[Thm.~1.3]{Lannstrom2021}\label{P2-nprimegroup}}] 
Let $G$ be a group and let $S$ be a nearly epsilon-strongly $G$-graded ring. The following statements are equivalent:
\begin{enumerate} [{\rm(a)}]
\item $S$ is not prime;
    
\item There exist:     \begin{enumerate} [{\rm(i)}]
    \item subgroups $N \lhd H \subseteq G$, 
    \item an $H$-invariant ideal $I$ of $S_e$ such that $I^gI=\{0\}$ for all $g \in G \setminus H,$ and
    \item nonzero ideals $\Tilde{A}, \Tilde{B}$ of $S_N$ such that $\Tilde{A}, \Tilde{B} \subseteq IS_N$ and $\Tilde{A}S_H\Tilde{B}=\{0\}.$ 
\end{enumerate}
\item There exist:    
\begin{enumerate} [{\rm(i)}]
    \item subgroups $N \lhd H \subseteq G$ with $N$ finite, 
    \item an $H$-invariant ideal $I$ of $S_e$ such that $I^gI=\{0\}$ for all $g \in G \setminus H,$ and
    \item nonzero ideals $\Tilde{A}, \Tilde{B}$ of $S_N$ such that $\Tilde{A}, \Tilde{B} \subseteq IS_N$ and $\Tilde{A}S_H\Tilde{B}=\{0\}.$ 
\end{enumerate}
\item There exist:    
\begin{enumerate} [{\rm(i)}]
    \item subgroups $N \lhd H \subseteq G$ with $N$ finite, 
    \item an $H$-invariant ideal $I$ of $S_e$ such that $I^gI=\{0\}$ for all $g \in G \setminus H,$ and
    \item nonzero $H$-invariant ideals $\Tilde{A}, \Tilde{B}$ of $S_N$ with $\Tilde{A}, \Tilde{B} \subseteq IS_N$ such that $\Tilde{A}S_H\Tilde{B}=\{0\}.$ 
\end{enumerate}
\item There exist:    
\begin{enumerate} [{\rm(i)}]
    \item subgroups $N \lhd H \subseteq G$ with $N$ finite, 
    \item an $H$-invariant ideal $I$ of $S_e$ such that $I^gI=\{0\}$ for all $g \in G \setminus H,$ and
    \item nonzero $H/N$-invariant ideals $\Tilde{A}, \Tilde{B}$ of $S_N$ such that $\Tilde{A}, \Tilde{B} \subseteq IS_N$ and $\Tilde{A}\Tilde{B}=\{0\}.$ 
\end{enumerate}
\end{enumerate}
\end{theorem}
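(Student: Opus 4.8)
The plan is to establish the cycle
$$\mathrm{(a)}\Rightarrow\mathrm{(c)}\Rightarrow\mathrm{(d)}\Rightarrow\mathrm{(e)}\Rightarrow\mathrm{(b)}\Rightarrow\mathrm{(a)},$$
so that the genuine content is concentrated in the extraction step $\mathrm{(a)}\Rightarrow\mathrm{(c)}$ and the sufficiency step $\mathrm{(b)}\Rightarrow\mathrm{(a)}$, while the three middle implications are formal manipulations of the invariance and vanishing conditions. Throughout, the role played in the classical strongly graded setting by the identity $S_gS_{g^{-1}}=S_e$ will instead be played by the approximate identities $\epsilon_g(d),\epsilon'_g(d)$ supplied by Proposition~\ref{P2-equiv-nes}, and it is this substitution that makes the non-unital, non-strong case delicate.

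For $\mathrm{(c)}\Rightarrow\mathrm{(d)}$ I would replace $\tilde A,\tilde B$ by the smallest $H$-invariant ideals of $S_N$ containing them, namely $\tilde A':=\sum_{h\in H}\tilde A^{\,h}$ and $\tilde B':=\sum_{h\in H}\tilde B^{\,h}$ (notation as in Definition~\ref{P2-def-invariant-nearly}). Since $I$ is $H$-invariant and $N\lhd H$, these stay inside $IS_N$, and conjugating $\tilde A S_H\tilde B=\{0\}$ while using $S_{h_1}S_HS_{h_2^{-1}}\subseteq S_H$ yields $\tilde A'S_H\tilde B'=\{0\}$. For $\mathrm{(d)}\Rightarrow\mathrm{(e)}$ the key observation is that every ideal of $S_N$ is automatically $N$-invariant, because $S_n,S_{n^{-1}}\subseteq S_N$ for $n\in N$; hence $H$-invariance and $H/N$-invariance coincide for ideals of $S_N$, and $\tilde A\tilde B\subseteq\tilde A S_H\tilde B=\{0\}$ since $S_N\subseteq S_H$. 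The reverse passage $\mathrm{(e)}\Rightarrow\mathrm{(b)}$ is the one subtle refinement: here one must upgrade $\tilde A\tilde B=\{0\}$ back to $\tilde A S_h\tilde B=\{0\}$ for each $h\in H$, by using the $H/N$-invariance of $\tilde A$ (or $\tilde B$) to move $S_h$ across the product. In the strongly graded case this is immediate; here it requires inserting $\epsilon$-elements from Proposition~\ref{P2-equiv-nes} to manufacture the missing local unit before the invariance can be applied, and this is where I would be most careful to keep the bookkeeping correct.

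For $\mathrm{(b)}\Rightarrow\mathrm{(a)}$ I would set $\mathcal A:=S\tilde A S$ and $\mathcal B:=S\tilde B S$, which are nonzero ideals of $S$ since $S$ is $s$-unital by Proposition~\ref{P2-prop:NearlyProperties}. It then suffices to prove $\tilde A S\tilde B=\{0\}$, which I would check degree by degree in the decomposition $\tilde A S\tilde B=\sum_{g\in G}\tilde A S_g\tilde B$. For $g\in H$ the summand vanishes by the hypothesis $\tilde A S_H\tilde B=\{0\}$, while for $g\notin H$ it vanishes by combining $\tilde A,\tilde B\subseteq IS_N$ with $I^gI=\{0\}$. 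The elementary principle behind the off-$H$ case is that $x\in S_{g'}$ with $S_{g'^{-1}}x=\{0\}$ forces $x=0$ (expand $x=\epsilon_{g'}(x)\,x=\sum_i a_i(b_ix)$ with $a_i\in S_{g'}$, $b_i\in S_{g'^{-1}}$, so each $b_ix\in S_{g'^{-1}}x=\{0\}$); applied to the $S_{g'}$-homogeneous constituents with $g'\notin H$, once the $I$-coefficients are brought together, the condition $I^{g'}I=\{0\}$ kills them.

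The heart of the theorem is $\mathrm{(a)}\Rightarrow\mathrm{(c)}$, which I would carry out by Passman's coset-counting and $\Delta$-methods adapted to the $s$-unital setting. Starting from nonzero ideals with $\mathcal A\mathcal B=\{0\}$, I would first normalize by suitable $\epsilon$-elements so as to place a nonzero coefficient in $S_e$, and then choose $\alpha\in\mathcal A$, $\beta\in\mathcal B$ of minimal support. From the minimal supports one reads off the subgroup $H$ (as a trim/stabilizer group of the support) and the $H$-invariant ideal $I$ of $S_e$ generated by the $S_e$-coefficients, the relation $I^gI=\{0\}$ for $g\notin H$ recording that out-of-$H$ coefficients are annihilated by $\mathcal A\mathcal B=\{0\}$; the ideals $\tilde A,\tilde B\subseteq IS_N$ are the ideals of $S_N$ generated by the $N$-homogeneous parts. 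I expect the main obstacle to lie precisely in the finiteness of $N$: as in the group-ring case $N$ must be identified with a $\Delta^+$-type subgroup consisting of elements with finite conjugation orbit, and showing that orbit-finiteness forces $|N|<\infty$ is the delicate counting step. It is made harder here than in Connell's or Passman's settings because the cancellation usually provided by invertible homogeneous components has to be replaced, at every stage of the count, by the approximate identities of Proposition~\ref{P2-equiv-nes}.
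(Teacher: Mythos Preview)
The paper does not prove this theorem at all: it is quoted from \cite[Theorem~1.3]{Lannstrom2021} and used as a black box (the statement is followed only by a remark explaining how it is to be applied to the isotropy-group ring $\oplus_{g\in G_e^e}S_g$). There is thus no proof in the present paper to compare your proposal against.

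For what it is worth, your outline is in the spirit of the argument in \cite{Lannstrom2021}. The sufficiency step $\mathrm{(b)}\Rightarrow\mathrm{(a)}$ works essentially as you describe, and your ``elementary principle'' that $x\in S_{g'}$ with $S_{g'^{-1}}x=\{0\}$ forces $x=0$ is exactly the right substitute for strong grading. Two comments, though. First, the step $\mathrm{(e)}\Rightarrow\mathrm{(b)}$ is not done by naively inserting $\epsilon$-elements (that produces terms of the same shape and is circular); the clean way is to observe that $S_H$ is nearly epsilon-strongly $H/N$-graded with identity component $S_N$, so that for the $H/N$-invariant ideals $\tilde A,\tilde B$ one has $S_H\tilde B=\tilde B S_H$ (the group-graded analogue of Corollary~\ref{P2-cor:inv-SJ-JS}), whence $\tilde A S_H\tilde B=\tilde A\tilde B S_H=\{0\}$. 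Second, your account of $\mathrm{(a)}\Rightarrow\mathrm{(c)}$ is only a gesture: the actual construction of $H$, $N$, $I$, $\tilde A$, $\tilde B$ from minimal-support elements, and especially the finiteness of $N$ via the $\Delta$-method, is the bulk of \cite{Lannstrom2021} and requires a great deal of care with the $\epsilon$-elements that a one-paragraph sketch cannot convey.
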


\begin{obs}
Note that, in
Theorem~\ref{P2-teo-nesgr},
$\oplus_{g\in G_e^e} S_g$ is nearly epsilon-strongly graded by the group $G_e^e$.
Hence, one can use Theorem~\ref{P2-nprimegroup} to decide whether  $\oplus_{g\in G_e^e} S_g$ is prime.
\end{obs}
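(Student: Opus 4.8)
The plan is to verify directly that $\oplus_{g \in G_e^e} S_g$ meets the definition of a nearly epsilon-strongly \emph{group}-graded ring, by specializing the groupoid machinery already developed. The content is essentially bookkeeping, since the groupoid notion of ``nearly epsilon-strongly graded'' was designed precisely to subsume the group case.

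First I would record that $G_e^e$ is a group. It is a subgroupoid of $G$ (as already observed in the proof of Proposition~\ref{P2-prop:NearlyProperties}~(v)), and since every $g \in G_e^e$ satisfies $s(g)=r(g)=e$, any two of its elements are composable. Thus composition is everywhere defined on $G_e^e$, the identity morphism $e$ serves as the group identity, and for each $g \in G_e^e$ the groupoid inverse $g^{-1}$ again lies in $G_e^e$ and serves as the group inverse. Hence $(G_e^e, \circ)$ is a group.

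Next I would invoke Proposition~\ref{P2-prop:NearlyProperties}~(iv) with the subgroupoid $H = G_e^e$, which yields that $T := \oplus_{g \in G_e^e} S_g$ is nearly epsilon-strongly $G_e^e$-graded in the groupoid sense: for every $g \in G_e^e$, the ring $S_g S_{g^{-1}}$ is $s$-unital and $S_g S_{g^{-1}} S_g = S_g$. It then remains to observe that, because $G_e^e$ is a group, this groupoid grading is literally a group grading of $T$. Indeed, the grading inclusions $S_g S_h \subseteq S_{gh}$ hold for \emph{all} pairs $g,h \in G_e^e$, since such pairs are always composable; the ``$=\{0\}$ otherwise'' clause is therefore vacuous, and $S_e$ is the identity-degree component. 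Comparing with \cite[Definition~3.3]{Nystedt2020} — which, as the Remark following the groupoid definition records, is exactly the group specialization of the groupoid notion — we conclude that $T$ is nearly epsilon-strongly graded by the group $G_e^e$.

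Finally, the ``Hence'' is immediate: Theorem~\ref{P2-nprimegroup} is stated for an arbitrary nearly epsilon-strongly group-graded ring, so it applies verbatim to $T$ and furnishes a criterion, phrased in terms of subgroups of $G_e^e$ and ideals of $S_e$, for deciding whether $\oplus_{g \in G_e^e} S_g$ is prime. I expect no genuine obstacle here: the only point requiring care is confirming that the groupoid grading axioms collapse to the group axioms once every pair of homogeneous degrees is composable, which is immediate.
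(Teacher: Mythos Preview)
Your proposal is correct and is exactly the justification the paper leaves implicit: the remark is stated without proof, and the intended argument is precisely the one you give, namely that $G_e^e$ is a group, so Proposition~\ref{P2-prop:NearlyProperties}~(iv) applied with $H=G_e^e$ yields a nearly epsilon-strongly $G_e^e$-graded ring in the groupoid sense, which for a group coincides with the group-graded notion of \cite{Nystedt2020}. There is nothing more to compare.
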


The following Theorem generalizes \cite[Thm.~1.4]{Lannstrom2021}.

\begin{theorem}\label{P2:nearly-torsion-free} 
Let $S$ be a nearly epsilon-strongly $G$-graded ring.
Suppose that there is some $e \in G_0'$ such that $G_e^e$ is torsion-free. Then $S$ is prime if, and only if, $S_e$ is $G_e^e$-prime and $\oplus_{e \in G_0}S_e$ is $G$-prime.
\end{theorem}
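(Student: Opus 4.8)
The plan is to deduce this from the already-established Theorem~\ref{P2-teo-nesgr} together with Theorem~\ref{P2-nprimegroup} applied to the isotropy group $G_e^e$. First I would invoke the equivalence (i) $\Leftrightarrow$ (iii) of Theorem~\ref{P2-teo-nesgr}: the ring $S$ is prime if, and only if, $\oplus_{f \in G_0}S_f$ is $G$-prime and, for \emph{some} $f \in G_0'$, the group graded ring $\oplus_{g \in G_f^f}S_g$ is prime. Taking the given element $e \in G_0'$ as the witness, this reduces the problem to showing: under the torsion-freeness hypothesis on $G_e^e$, the ring $\oplus_{g \in G_e^e}S_g$ is prime if, and only if, its degree-$e$ component $S_e$ is $G_e^e$-prime. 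Since $\oplus_{g \in G_e^e}S_g$ is nearly epsilon-strongly graded by the group $G_e^e$ (as noted in the remark following Theorem~\ref{P2-teo-nesgr}), this is precisely the content of \cite[Theorem~1.4]{Lannstrom2021}.

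\textbf{Key steps, in order.} (1) Fix the given $e \in G_0'$ with $G_e^e$ torsion-free and set $T := \oplus_{g \in G_e^e}S_g$, a nearly epsilon-strongly $G_e^e$-graded ring with identity component $T_e = S_e$. (2) Recall from Theorem~\ref{P2-nprimegroup} that $T$ is \emph{not} prime if and only if condition (e) holds: there exist subgroups $N \lhd H \subseteq G_e^e$ with $N$ finite, an $H$-invariant ideal $I$ of $S_e$ with $I^g I = \{0\}$ for all $g \in G_e^e \setminus H$, and nonzero $H/N$-invariant ideals of $S_N$ contained in $IS_N$ whose product is zero. Since $G_e^e$ is torsion-free, the only finite subgroup is $N = \{e\}$; hence condition (e) collapses to the existence of a subgroup $H \subseteq G_e^e$ and an $H$-invariant ideal $I$ of $S_e$ with $I^g I = \{0\}$ for all $g \in G_e^e \setminus H$ and two nonzero $H$-invariant ideals of $S_e$ (note $S_N = S_e$) whose product is zero. (3) Argue that this simplified negation is equivalent to $S_e$ \emph{not} being $G_e^e$-prime: if $S_e$ is not $G_e^e$-prime, witnessing nonzero $G_e^e$-invariant ideals with zero product yield condition (e) with $H = G_e^e$ and $I = S_e$ (using $s$-unitality of $S_e$ from Proposition~\ref{P2-prop:NearlyProperties}~(i) to see $I^g I = S_e$ is nonzero, forcing $H = G_e^e$); conversely, the ideals produced by condition (e) are already $G_e^e$-invariant after passing to $H$-invariant closures inside $S_e$, and the condition $I^gI=\{0\}$ for $g$ outside $H$ together with torsion-freeness forces the zero-product pair to be $G_e^e$-invariant. (4) Combine: $T$ prime $\Leftrightarrow$ $S_e$ is $G_e^e$-prime; then feed this into Theorem~\ref{P2-teo-nesgr}~(iii) $\Leftrightarrow$ (i) to conclude $S$ prime $\Leftrightarrow$ $\oplus_{f \in G_0}S_f$ is $G$-prime and $S_e$ is $G_e^e$-prime.

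\textbf{Main obstacle.} The delicate point is step~(3): verifying that, under torsion-freeness, Passman's condition (e) for $T = \oplus_{g \in G_e^e}S_g$ is genuinely equivalent to the failure of $G_e^e$-primeness of $S_e$, rather than merely implied by it. The forward direction (not $G_e^e$-prime $\Rightarrow$ not prime) is the easy half. The reverse requires showing that the subgroup $H$ appearing in condition (e) can, in the torsion-free setting, be taken to be all of $G_e^e$ — equivalently, that the $H$-invariant ideal $I$ with $I^g I = \{0\}$ outside $H$ cannot be a proper obstruction when $S_e$ is $s$-unital and $N$ is forced trivial. This is exactly the mechanism behind \cite[Theorem~1.4]{Lannstrom2021} in the group case, so the cleanest route is simply to cite that theorem directly for the group graded ring $T$, avoiding re-deriving the reduction. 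If a self-contained argument is preferred, one must carefully track how $s$-unitality of $S_e$ (so that $I^gI = \{0\}$ with $I = S_e$ would force $g \in H$) pins down $H = G_e^e$.
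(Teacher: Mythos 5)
Your proposal is correct and takes essentially the same route as the paper, whose entire proof is to combine Theorem~\ref{P2-teo-nesgr} with \cite[Theorem~1.4]{Lannstrom2021} applied to the nearly epsilon-strongly $G_e^e$-graded ring $\oplus_{g\in G_e^e}S_g$; your excursion through condition (e) of Theorem~\ref{P2-nprimegroup} is unnecessary once you cite that theorem directly, as you yourself note. One small correction: in the ``only if'' direction you should use the equivalence (i)~$\Leftrightarrow$~(ii) of Theorem~\ref{P2-teo-nesgr} (primeness of $\oplus_{g\in G_f^f}S_g$ for \emph{every} $f\in G_0'$) rather than (i)~$\Leftrightarrow$~(iii), since (iii) only yields primeness at \emph{some} idempotent, which need not be the given $e$ with $G_e^e$ torsion-free.
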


\begin{proof}
It follows from Theorem~\ref{P2-teo-nesgr} and \cite[Thm.~1.4]{Lannstrom2021}.
\end{proof}

\section{Applications to partial skew groupoid rings}\label{P2-Applications}

In this section, we will apply our main results on primeness for nearly epsilon-strongly groupoid graded rings to partial skew groupoid rings, (global) skew groupoid rings and groupoid rings. In particular, we will characterize prime partial skew groupoid rings induced by partial actions of group-type (cf.~\cite{Bagio2021}). Furthermore, we will generalize \cite[Thm.~12.4]{Lannstrom2021} and \cite[Thm.~13.7]{Lannstrom2021}. 

Throughout this section, unless stated otherwise, $G$ denotes an arbitrary groupoid and $A$ denotes an arbitrary ring. 

\subsection{Partial skew groupoid rings}

\begin{definition}
A \emph{partial action of a groupoid $G$ on a ring $A$} is a family of pairs $ \sigma =(A_g,\sigma_g)_{g \in G}$ satisfying:
\begin{enumerate}[{\rm (i)}]
\item for each $g \in G,$ $A_{r(g)}$
is an ideal of $A,$ $A_g$ is an ideal of $A_{r(g)}$ and $\sigma_g : A_{g^{-1}} \longrightarrow A_g$
is a ring isomorphism,
\item $\sigma_e =\id_{A_e}$, for every $e \in G_0$,
\item $\sigma_{h}^{-1}(A_{g^{-1}}\cap A_h)\subseteq A_{(gh)^{-1}}$, whenever $(g,h) \in G^2$,
\item $\sigma_g(\sigma_h(x)) = \sigma_{gh}(x)$, for all $x \in \sigma_h^{-1}(A_{g^{-1}}\cap A_h)$ and $(g,h) \in G^2$.
\end{enumerate}
\end{definition}

\begin{definition}
Given a partial action $\sigma$ of a groupoid $G$ on a ring $A$ one may define the \emph{partial skew groupoid ring $A \rtimes_{\sigma}G$} as the set of all formal sums of the form $\displaystyle\sum_{g \in G} a_g \delta_g$, where $a_g \in A_g$ is zero for all but finitely many $g\in G$, and with addition defined point-wise and multiplication given by
\begin{equation*}
a_g\delta_g \cdot b_h \delta_h =
\begin{cases}
 \alpha_g(\alpha_{g^{-1}}(a_g)b_h)\delta_{gh},& \text{ if } (g,h) \in G^2,\\
0, & \text{ otherwise. } 
\end{cases}
\end{equation*}
\end{definition}

\begin{obs}\label{P2:obs-partial-s-unital}
(a) 
Throughout this 
section,
unless stated otherwise,
will will assume that $\alpha=(A_g,\alpha_g)_{g\in G}$ is an arbitrary partial action of $G$ on $A$,
that $A_g$ is an $s$-unital ring, for every $g \in G$, and that $A=\oplus_{e\in G_0} A_e$.
As a consequence, 
$A\rtimes_{\sigma}G$ will always be an associative ring (see \cite[Rem.~2.7~(b)]{Moreira2022}), and there will exist a ring isomorphism (cf. \cite[Lem.~3.5~(i)]{Moreira2022}) $\psi: A \longrightarrow \oplus_{e \in G_0} A_e\delta_e$ defined by
\begin{align}\label{P2-map-psi}
    \psi \! \left(\sum_{e \in G_0} a_e\right) :=\sum_{e \in G_0} a_e \delta_e.
\end{align}

(b)
Under the above assumptions, by \cite[Lem.~3.2]{Moreira2022}, $A_g$ is an ideal of $A$, for every $g \in G.$

(c) 
It is readily verified that any partial skew groupoid ring $S:=A\rtimes_{\sigma}G$ carries a natural $G$-grading defined by letting $S_g:=A_g\delta_g$, for every $g\in G$.
\end{obs}

The following result generalizes \cite[Prop.~13.1]{Lannstrom2021}.
\begin{proposition}\label{P2-prop:SkewNearly}
The partial skew groupoid ring $A \rtimes_{\sigma}G$ is a nearly epsilon-strongly $G$-graded ring.
\end{proposition}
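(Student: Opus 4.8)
The goal is to verify the two defining conditions of a nearly epsilon-strongly $G$-graded ring for $S := A\rtimes_\sigma G$ with its natural grading $S_g = A_g\delta_g$: namely that $S_g S_{g^{-1}}$ is an $s$-unital ring and that $S_g S_{g^{-1}} S_g = S_g$, for every $g \in G$. The cleanest route is to first compute the products $S_g S_{g^{-1}}$ and $S_g S_{g^{-1}} S_g$ explicitly in terms of the ideals $A_g$ and the maps $\sigma_g$, and then read off the two properties.

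First I would fix $g\in G$ and unwind the multiplication rule. For $a_g\delta_g, b_{g^{-1}}\delta_{g^{-1}}$ one has $a_g\delta_g\cdot b_{g^{-1}}\delta_{g^{-1}} = \sigma_g(\sigma_{g^{-1}}(a_g)b_{g^{-1}})\delta_{gg^{-1}} = \sigma_g(\sigma_{g^{-1}}(a_g)b_{g^{-1}})\delta_{r(g)}$. Since $\sigma_{g^{-1}}\colon A_g \to A_{g^{-1}}$ is an isomorphism of ideals, as $a_g$ ranges over $A_g$ and $b_{g^{-1}}$ over $A_{g^{-1}}$ the products $\sigma_{g^{-1}}(a_g)b_{g^{-1}}$ range over (finite sums of elements of) $A_{g^{-1}}A_{g^{-1}} = A_{g^{-1}}^2$, and since $A_{g^{-1}}$ is $s$-unital we get $A_{g^{-1}}^2 = A_{g^{-1}}$; applying the isomorphism $\sigma_g$ gives $\sigma_g(A_{g^{-1}}) = A_g$. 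Hence $S_g S_{g^{-1}} = A_g\delta_{r(g)}$. Because $A_g$ is an ideal of $A$ (Remark~\ref{P2:obs-partial-s-unital}) and is $s$-unital by the standing assumption, $A_g\delta_{r(g)}$ is an $s$-unital ring — one checks directly that an $s$-unit $u\in A_g$ for a finite set $\{x_1,\dots,x_n\}\subseteq A_g$ yields $u\delta_{r(g)}$ as an $s$-unit for $\{x_1\delta_{r(g)},\dots,x_n\delta_{r(g)}\}$, using that $\sigma_{r(g)} = \id_{A_{r(g)}}$ so the multiplication on the $r(g)$-component is just the multiplication in $A_{r(g)}$. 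This establishes the first condition.

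Next I would compute $S_g S_{g^{-1}} S_g = (A_g\delta_{r(g)})(A_g\delta_g)$. For $x\in A_g$ and $a_g\in A_g$, $x\delta_{r(g)}\cdot a_g\delta_g = \sigma_{r(g)}(\sigma_{r(g)}^{-1}(x)a_g)\delta_g = x a_g\delta_g$, again using $\sigma_{r(g)}=\id$. So $S_g S_{g^{-1}} S_g = A_g A_g\,\delta_g = A_g^2\delta_g = A_g\delta_g = S_g$, where the middle equality is $s$-unitality of $A_g$. This gives the second condition, and we are done by Proposition~\ref{P2-equiv-nes} (or directly by the definition).

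\textbf{Anticipated obstacle.} There is no deep obstacle here; the only real care needed is bookkeeping around finite sums — elements of $S_g S_{g^{-1}}$ are not single products $a_g\delta_g\cdot b_{g^{-1}}\delta_{g^{-1}}$ but finite sums of such, so the identifications $S_g S_{g^{-1}} = A_g\delta_{r(g)}$ etc.\ must be phrased in terms of the additive span, and one invokes Proposition~\ref{P2:prop-s-unital} (the multi-element version of $s$-unitality) to produce a single $s$-unit working for all the finitely many homogeneous pieces at once. One also needs Remark~\ref{P2:obs-partial-s-unital} to know $A_g\trianglelefteq A$ (not merely $A_g \trianglelefteq A_{r(g)}$), though for this particular computation $A_g\trianglelefteq A_{r(g)}$ together with $\sigma_{r(g)}=\id$ already suffices. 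The use of the axiom $\sigma_e = \id_{A_e}$ for $e\in G_0$ is what makes the $r(g)$-component behave like an ordinary ideal of a ring rather than a twisted object, and that is the one place where the partial-action axioms genuinely enter.
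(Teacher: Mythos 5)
Your proposal is correct and follows essentially the same route as the paper: compute $S_gS_{g^{-1}}=\sigma_g(\sigma_{g^{-1}}(A_g)A_{g^{-1}})\delta_{r(g)}=\sigma_g(A_{g^{-1}}A_{g^{-1}})\delta_{r(g)}=A_g\delta_{r(g)}$ using $s$-unitality (hence idempotence) of $A_{g^{-1}}$, observe that $A_g\delta_{r(g)}$ inherits $s$-unitality from $A_g$, and then get $S_gS_{g^{-1}}S_g=A_g^2\delta_g=A_g\delta_g=S_g$ from idempotence of $A_g$. Your extra remarks about finite sums, the role of $\sigma_{r(g)}=\id$, and the multi-element $s$-unit are just the bookkeeping the paper leaves implicit.
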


\begin{proof}
Let $g \in G$. Using that $A_{g^{-1}}$ is $s$-unital, and hence idempotent, we get that
$$(A_g\delta_g)(A_{g^{-1}}\delta_{g^{-1}})=\sigma_g(\sigma_{g^{-1}}(A_g)A_{g^{-1}})\delta_{r(g)}=\sigma_g(A_{g^{-1}}A_{g^{-1}})\delta_{r(g)}=\sigma_g(A_{g^{-1}})\delta_{r(g)}=A_g\delta_{r(g)}.$$
Now, using that $A_g$ is $s$-unital we get that $A_g \delta_{r(g)}$ is $s$-unital, and that $A_g$ is idempotent. Hence,
$$(A_g\delta_g)(A_{g^{-1}}\delta_{g^{-1}})(A_g\delta_g)=A_g\delta_{r(g)}A_g\delta_g=A_g^2\delta_g=A_g\delta_g.$$ This shows that $A \rtimes_{\sigma}G$ is  nearly epsilon-strongly $G$-graded.
\end{proof}

\begin{obs}
By Proposition~\ref{P2-prop:NearlyProperties} and Proposition~\ref{P2-prop:SkewNearly}, the partial skew groupoid ring $A \rtimes_{\sigma}G$ is $s$-unital.
\end{obs}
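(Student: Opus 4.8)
The plan is to obtain this observation directly by chaining the two cited propositions, since it is purely a deduction that requires no new computation. First I would set $S := A \rtimes_{\sigma} G$ and invoke Proposition~\ref{P2-prop:SkewNearly}, which asserts that $S$ is nearly epsilon-strongly $G$-graded. This single step is what places the partial skew groupoid ring within the scope of the general structure theory developed in Section~\ref{P2-sec:BasicProp}.

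Next I would apply Proposition~\ref{P2-prop:NearlyProperties}(iii) to $S$. That part of the proposition states that \emph{every} nearly epsilon-strongly $G$-graded ring is $s$-unital (and that its principal component $\oplus_{e \in G_0} S_e$ is an $s$-unital subring). Since $S$ verifies the hypothesis by the previous step, the desired conclusion that $A \rtimes_{\sigma} G$ is $s$-unital follows at once.

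There is essentially no obstacle at this stage, because all of the substantive work has already been carried out upstream. The genuine content sits behind Proposition~\ref{P2-prop:NearlyProperties}(iii), whose proof (through part (ii)) constructs, for each $d \in S$, elements $a, a' \in \oplus_{e \in G_0} S_e$ satisfying $ad = da' = d$ by assembling $s$-units over the finitely many source and range objects $r(t)$ and $s(t)$ with $t \in \Supp(d)$. The present remark merely records that this machinery specializes verbatim to the partial skew groupoid ring once Proposition~\ref{P2-prop:SkewNearly} has certified that the nearly epsilon-strongly graded hypothesis holds. For completeness I would note in the write-up that the standing assumptions on $\sigma$---namely that each $A_g$ is $s$-unital and $A = \oplus_{e \in G_0} A_e$---are exactly what make Proposition~\ref{P2-prop:SkewNearly} applicable, so no additional hypotheses are needed here.
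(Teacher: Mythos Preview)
Your proposal is correct and matches the paper's reasoning exactly: the remark is an immediate consequence of applying Proposition~\ref{P2-prop:SkewNearly} to see that $A\rtimes_\sigma G$ is nearly epsilon-strongly $G$-graded, and then invoking Proposition~\ref{P2-prop:NearlyProperties}(iii). There is nothing further to add.
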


\begin{definition}\label{P2-def:partial-invariant}
Let $G$ be a groupoid, let $A$ be a ring and let $\sigma = (A_g, \sigma_g)_{g\in G}$ be a  partial action of $G$ on $A$. 
\begin{enumerate}[{\rm(i)}]
\item Let $H$ be a subgroupoid of $G.$ An ideal $I$ of $A$ is said to be \emph{$H$-invariant} if
$\sigma_g(I \cap A_{g^{-1}}) \subseteq I$ for every $g\in H$.

\item $A$ is said to be \emph{$G$-prime} if there are no nonzero $G$-invariant ideals $I,J$ of $A$ such that $IJ=\{0\}$.
\end{enumerate}
\end{definition}

The next result generalizes \cite[Rem.~13.4]{Lannstrom2021} from the group setting.

\begin{proposition}\label{P2-prop-corresp-inv-ideals}
$I$ is a $G$-invariant ideal of $A$ in the sense of Definition~\ref{P2-def:partial-invariant} if, and only if, $\psi(I)$ is a $G$-invariant ideal of $\oplus_{e \in G_0}A_e\delta_e$ in the sense of Definition~\ref{P2-def-invariant-nearly}. In particular, $A$ is $G$-prime if, and only if, $\oplus_{e \in G_0}A_e\delta_e$ is $G$-prime.
\end{proposition}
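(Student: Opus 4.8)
The statement is a purely formal equivalence, so the plan is to chase the definitions on both sides and show they coincide term by term. First I would unpack what it means for $\psi(I)$ to be $G$-invariant in the sense of Definition~\ref{P2-def-invariant-nearly}: this requires $(\psi(I))^g = S_{g^{-1}}\,\psi(I)\,S_g \subseteq \psi(I)$ for every $g\in G$, where $S=A\rtimes_\sigma G$ and $S_g = A_g\delta_g$. So the core computation is to evaluate the product $(A_{g^{-1}}\delta_{g^{-1}})\cdot \psi(I)\cdot (A_g\delta_g)$ inside $A\rtimes_\sigma G$ and identify it with $\psi$ of something expressible through $\sigma_g$.

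The key step is the following explicit identity: for $g\in G$ and an ideal $I$ of $A$,
\[
(A_{g^{-1}}\delta_{g^{-1}})\,\psi(I)\,(A_g\delta_g) = \sigma_{g^{-1}}\!\bigl(A_g \cap I\bigr)\,\delta_{s(g)} = \psi\!\bigl(\sigma_{g^{-1}}(I\cap A_g)\bigr).
\]
To see this, note that $\psi(I) = \oplus_{e\in G_0}(I\cap A_e)\delta_e$ (using $A=\oplus_{e\in G_0}A_e$ and $I = \oplus_{e\in G_0}(I\cap A_e)$, which holds because each $A_e$ is an ideal of $A$ with $A$ their direct sum). When we multiply $a_{g^{-1}}\delta_{g^{-1}}$ by $(I\cap A_e)\delta_e$, the product vanishes unless $e = r(g^{-1}) = s(g)$; and then multiplying on the right by $b_g\delta_g$ lands everything in $S_{s(g)}$. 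Carrying out the multiplication rule and using that $A_{g^{-1}}$, $A_g$ are $s$-unital (hence idempotent) and that $I$ is an ideal, one gets $\sigma_{g^{-1}}\bigl(\sigma_g(A_{g^{-1}}\cdot\text{stuff})\cap\cdots\bigr)$ collapsing to exactly $\sigma_{g^{-1}}(I\cap A_g)\,\delta_{s(g)}$. The mild bookkeeping here — keeping track of which ideal intersections appear and using $s$-unitality repeatedly — is the main (though routine) obstacle; everything else is formal.

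Granting that identity, both directions follow immediately. If $I$ is $G$-invariant in the sense of Definition~\ref{P2-def:partial-invariant}, then $\sigma_g(I\cap A_{g^{-1}})\subseteq I$ for all $g$; replacing $g$ by $g^{-1}$ gives $\sigma_{g^{-1}}(I\cap A_g)\subseteq I$, hence $(\psi(I))^g = \psi(\sigma_{g^{-1}}(I\cap A_g)) \subseteq \psi(I)$, so $\psi(I)$ is $G$-invariant in the graded sense. Conversely, if $(\psi(I))^g\subseteq\psi(I)$ for all $g$, then $\psi(\sigma_{g^{-1}}(I\cap A_g))\subseteq\psi(I)$; since $\psi$ is an injective ring homomorphism (in fact an isomorphism onto $\oplus_{e\in G_0}A_e\delta_e$), this forces $\sigma_{g^{-1}}(I\cap A_g)\subseteq I$ for all $g$, and again replacing $g$ by $g^{-1}$ yields the invariance condition of Definition~\ref{P2-def:partial-invariant}. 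Finally I would remark that $\psi$ restricts to a bijection between ideals of $A$ and ideals of $\oplus_{e\in G_0}A_e\delta_e$ (being a ring isomorphism), so the correspondence $I\mapsto\psi(I)$ is indeed a bijection between the two classes of invariant ideals.
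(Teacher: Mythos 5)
Your proposal is correct and follows essentially the same route as the paper: the paper likewise computes $\psi(I)^g=A_{g^{-1}}\delta_{g^{-1}}\cdot\psi(I)\cdot A_g\delta_g=\sigma_{g^{-1}}(A_gIA_g)\delta_{s(g)}=\sigma_{g^{-1}}(I\cap A_g)\delta_{s(g)}$ using $s$-unitality of the $A_g$'s and the fact that they are ideals of $A$, and then reads off the equivalence of the two invariance conditions (with the relabeling $g\leftrightarrow g^{-1}$, which you make explicit). The only thing to note is that your "routine bookkeeping" step amounts precisely to the identity $A_gIA_g=IA_g=I\cap A_g$, which is where $s$-unitality enters, exactly as in the paper.
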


\begin{proof}
Suppose that $I$ is an ideal of $A$.
Let $g \in G.$ 
By
Remark~\ref{P2:obs-partial-s-unital}~(b)
and $s$-unitality of $A_g$, we get that $A_g I = I\cap A_g=IA_g$ and
$I A_g = 
A_g I A_g = A_g I$.
Furthermore,
$\psi(I)=\oplus_{e \in G_0}(IA_e)\delta_e.$ 
Notice that $A_g = A_{r(g)}A_g$.
We get that
$$\begin{aligned} \psi(I)^g & =A_{g^{-1}}\delta_{g^{-1}} \cdot \psi(I) \cdot A_g\delta_g = A_{g^{-1}}\delta_{g^{-1}} \cdot (IA_{r(g)})A_g\delta_g=A_{g^{-1}}\delta_{g^{-1}} \cdot IA_g\delta_g \\ & =\sigma_{g^{-1}}(\sigma_g(A_{g^{-1}})IA_g)\delta_{s(g)}=\sigma_{g^{-1}}(A_gIA_g)\delta_{s(g)}.\end{aligned}$$
Therefore, $$\psi(I)^g \subseteq \psi(I) \Longleftrightarrow \sigma_{g^{-1}}(A_gIA_g)\delta_{s(g)} \subseteq 
\psi(I) 
\Longleftrightarrow \sigma_{g^{-1}}(A_gIA_g) \subseteq I
\Longleftrightarrow \sigma_{g^{-1}}(I\cap A_g) \subseteq I.$$

\end{proof}

\begin{obs} \begin{enumerate}[{\rm(a)}]

\item Recall that, with the natural $G$-grading on $A\rtimes_\sigma G$,
an element $e \in G_0'$ is a \emph{support-hub} if for every nonzero element $a_g\delta_g$, with $g \in G$, there are $h,k \in G$ such that $s(h)=e,$ $r(k)=e$ and both $a_g\delta_gA_h\delta_h$ and $A_k\delta_ka_g\delta_g$ are nonzero.

\item 
For $e\in G_0$,
denote by $\sigma^e:=(A_h,\sigma_h)_{h\in G_e^e}$ the partial action 
of the isotropy group $G_e^e$ on the ring $A_e$, obtained by restricting $\sigma$.
The associated partial skew group ring is denoted by
 $A_e\rtimes_{\sigma^e} G_e^e$.
\end{enumerate}
\end{obs}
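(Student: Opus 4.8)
The final statement is a Remark recording two pieces of terminology in the partial-skew setting rather than an assertion requiring argument, so the task is to check that each item is internally consistent with the definitions already in place. For part (a), the plan is to start from the observation, made just above, that the natural $G$-grading on $A\rtimes_\sigma G$ is given by $S_g=A_g\delta_g$ for every $g\in G$. Substituting $S_g=A_g\delta_g$, $S_h=A_h\delta_h$ and $S_k=A_k\delta_k$ directly into Definition~\ref{P2:def-support-hub} yields precisely the condition displayed in item (a): a nonzero homogeneous element of $S$ has the form $a_g\delta_g$, and the two nonvanishing requirements $a_gS_h\neq\{0\}$ and $S_ka_g\neq\{0\}$ become $a_g\delta_g\,A_h\delta_h\neq\{0\}$ and $A_k\delta_k\,a_g\delta_g\neq\{0\}$. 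Hence item (a) is nothing more than the specialization of the general support-hub definition to the partial skew groupoid ring, and no further argument is needed.

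For part (b), the only thing worth verifying is that the restricted family $\sigma^e=(A_h,\sigma_h)_{h\in G_e^e}$ is a genuine partial action of the group $G_e^e$ on the ring $A_e$, so that the symbol $A_e\rtimes_{\sigma^e}G_e^e$ denotes a well-defined partial skew group ring. First I would note that $G_e^e$ is the isotropy group at $e$, a group with the single object $e$, so that $r(h)=s(h)=e$ for every $h\in G_e^e$. Consequently each $A_h$ with $h\in G_e^e$ is an ideal of $A_{r(h)}=A_e$, and each $\sigma_h\colon A_{h^{-1}}\to A_h$ is a ring isomorphism between ideals of $A_e$. The identity axiom $\sigma_e=\id_{A_e}$, the inclusion axiom, and the cocycle identity $\sigma_g\circ\sigma_h=\sigma_{gh}$ hold for all composable pairs in $G$; restricting these to $G_e^e$ reproduces exactly the four axioms of a partial action of the group $G_e^e$ on $A_e$. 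This confirms that $\sigma^e$ is a partial action and that $A_e\rtimes_{\sigma^e}G_e^e$ is well defined.

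Since both items are purely definitional, there is no genuine obstacle to overcome; the content lies entirely in matching the specialized notation to the general framework. The single point deserving a moment of care, used silently in item (b), is the observation that composability is automatic within the isotropy group: any pair $(g,h)$ with $g,h\in G_e^e$ satisfies $s(g)=e=r(h)$, so the partial-action axioms—originally phrased with the proviso $(g,h)\in G^2$—become unconditional once restricted to $G_e^e$, which is precisely what makes the restriction a partial action of a \emph{group}.
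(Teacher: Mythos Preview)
Your reading is correct: this Remark is purely definitional in the paper and carries no proof. Your verification that item~(a) is the literal specialization of Definition~\ref{P2:def-support-hub} to $S_g=A_g\delta_g$, and that item~(b) defines a bona fide partial action of the group $G_e^e$ on $A_e$, is accurate and slightly more explicit than the paper, which simply records the notation without comment.
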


\begin{theorem}\label{P2-teo:partial-skew-prime} 
Let $\sigma = (A_g, \sigma_g)_{g\in G}$ be a partial action of $G$ on $A$ such that $A_g$ is $s$-unital for every $g \in G$ and  $A=\oplus_{e\in G_0} A_e$.  Then, the following statements are equivalent:
\begin{enumerate}[{\rm (i)}]
    \item The partial skew groupoid ring $A\rtimes_\sigma G$ is prime;
    \item $A$ is $G$-prime and, for every $e \in G_0',$ $A_e\rtimes_{\sigma^e} G_e^e$ is prime; 
    \item $A$ is $G$-prime and, for some $e \in G_0'$, $A_e\rtimes_{\sigma^e} G_e^e$ is prime; 
    \item $A\rtimes_\sigma G$ is graded prime and, for every $e \in G_0',$ $A_e\rtimes_{\sigma^e} G_e^e$ is prime; 
    \item $A\rtimes_\sigma G$ is graded prime and, for some $e \in G_0'$, $A_e\rtimes_{\sigma^e} G_e^e$ is prime; 
    
    \item For every $e \in G_0',$ $e$ is a support-hub and $A_e\rtimes_{\sigma^e} G_e^e$ is prime; 
    \item For some $e \in G_0',$  $e$ is a support-hub and $A_e\rtimes_{\sigma^e} G_e^e$ is prime.
\end{enumerate}
\end{theorem}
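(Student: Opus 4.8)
The plan is to derive Theorem~\ref{P2-teo:partial-skew-prime} as an essentially direct corollary of the general Theorem~\ref{P2-teo-nesgr}, by transporting each of its seven conditions along the natural $G$-grading $S_g := A_g\delta_g$ on $S := A\rtimes_\sigma G$ and the ring isomorphism $\psi$ of \eqref{P2-map-psi}. First I would recall that $S$ is nearly epsilon-strongly $G$-graded by Proposition~\ref{P2-prop:SkewNearly}, so Theorem~\ref{P2-teo-nesgr} applies verbatim with $S_e = A_e\delta_e$ and $\oplus_{e\in G_0}S_e = \oplus_{e\in G_0}A_e\delta_e \cong A$ via $\psi$. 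Condition (i) of the two theorems is literally the same statement. The bridge for conditions (ii) and (iii) is Proposition~\ref{P2-G-prime-equiv}: $A$ is $G$-prime if and only if $\oplus_{e\in G_0}A_e\delta_e$ is $G$-prime. The bridge for (iv) and (v) is that ``graded prime'' for the graded ring $S$ is by definition a statement about $S$ itself, so nothing needs translating there. For the support-hub conditions (vi) and (vii), the Remark immediately preceding the theorem already records that, under the natural grading, $e\in G_0'$ is a support-hub in the sense of Definition~\ref{P2:def-support-hub} precisely when the stated condition on the elements $a_g\delta_g$ holds; so again the translation is definitional.

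The one genuinely substantive point is identifying, for each $e\in G_0'$, the ring $\oplus_{g\in G_e^e}S_g = \oplus_{g\in G_e^e}A_g\delta_g$ with the partial skew group ring $A_e\rtimes_{\sigma^e}G_e^e$. The second part of the Remark before the theorem fixes the notation $\sigma^e = (A_h,\sigma_h)_{h\in G_e^e}$ for the restricted partial action of the isotropy group $G_e^e$ on $A_e$. I would check that $\sigma^e$ is indeed a partial action of the group $G_e^e$ on $A_e$: for $h\in G_e^e$ one has $r(h)=s(h)=e$, so $A_{r(h)}=A_e$, each $A_h$ is an ideal of $A_e$, $\sigma_h\colon A_{h^{-1}}\to A_h$ is a ring isomorphism, $\sigma_e=\id_{A_e}$, and axioms (iii)--(iv) are inherited because $(h,k)\in G^2$ automatically whenever $h,k\in G_e^e$. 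Since each $A_h$ is $s$-unital and $A_e = \bigoplus_{f\in (G_e^e)_0}(A_e)_f$ trivially (the object set of the group $G_e^e$ is the single object $e$), $A_e\rtimes_{\sigma^e}G_e^e$ is a well-defined associative ring, and its multiplication rule on the pieces $A_h\delta_h$ is exactly the restriction of the multiplication of $A\rtimes_\sigma G$; hence the identity map on formal sums gives a ring isomorphism $\oplus_{g\in G_e^e}A_g\delta_g \cong A_e\rtimes_{\sigma^e}G_e^e$. Consequently $\oplus_{g\in G_e^e}S_g$ is prime if and only if $A_e\rtimes_{\sigma^e}G_e^e$ is prime.

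With these three dictionaries in hand --- $\oplus_{e\in G_0}S_e\cong A$ as $G$-graded/$G$-prime objects, the support-hub notions coinciding, and $\oplus_{g\in G_e^e}S_g\cong A_e\rtimes_{\sigma^e}G_e^e$ --- each of the conditions (i)--(vii) of Theorem~\ref{P2-teo:partial-skew-prime} is seen to be equivalent to the correspondingly numbered condition of Theorem~\ref{P2-teo-nesgr}. The equivalences among the latter then transfer immediately, completing the proof. I expect the main obstacle, such as it is, to be purely bookkeeping: making sure that the isomorphism $\psi$ is genuinely compatible with the $G$-grading (which it is, since $\psi$ sends $A_e$ onto $S_e$) and that Definition~\ref{P2-def:partial-invariant}'s notion of $G$-invariance for ideals of $A$ matches Definition~\ref{P2-def-invariant-nearly}'s notion for ideals of $\oplus_{e\in G_0}S_e$ --- but this is precisely the content of Proposition~\ref{P2-prop-corresp-inv-ideals}, so no new work is required. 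Thus the proof reduces to the single line: apply Theorem~\ref{P2-teo-nesgr} to $S=A\rtimes_\sigma G$ and translate via Propositions~\ref{P2-prop:SkewNearly}, \ref{P2-G-prime-equiv} and the identification above.
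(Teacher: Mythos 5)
Your proposal is correct and follows exactly the route the paper takes: the paper's own proof is the one-liner ``it follows from Proposition~\ref{P2-prop:SkewNearly}, Theorem~\ref{P2-teo-nesgr} and Proposition~\ref{P2-G-prime-equiv}'', and your additional verifications (the identification $\oplus_{g\in G_e^e}A_g\delta_g\cong A_e\rtimes_{\sigma^e}G_e^e$ and the support-hub translation) are precisely the bookkeeping the paper leaves implicit via the remarks preceding the theorem. No gaps; your write-up simply makes the same argument more explicit.
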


\begin{proof}
It follows from
Proposition~\ref{P2-prop:SkewNearly}, Theorem~\ref{P2-teo-nesgr}
and Proposition~\ref{P2-prop-corresp-inv-ideals}.
\end{proof}

We recall the following result from \cite{Lannstrom2021}. In that paper, the authors say that a partial skew group ring is \emph{$s$-unital} if it is defined by a partial group action on $s$-unital ideals.

\begin{theorem}[{\cite[Thm.~13.7]{Lannstrom2021}}]\label{P2:not-prime-13.7}
Let $G$ be a group and let $A \rtimes_{\sigma}G$ be an $s$-unital partial skew group ring. Then, $A \rtimes_{\sigma}G$ is not prime if, and only if, there are:

\begin{enumerate}[{\rm(i)}]
    \item subgroups $N \lhd H \subseteq G$ with $N$ finite, 
    \item an ideal $I$ of $A$ such that
    \begin{itemize}
        \item $\sigma_h(IA_{h^{-1}})=IA_h$ for every $h \in H,$ 
        \item $IA_g\cdot\sigma_g(IA_{g^{-1}})=\{0\}$ for every $g \in G\setminus H,$ and
    \end{itemize}
    \item nonzero ideals $\Tilde{B}, \Tilde{D}$ of $A\rtimes_\sigma N$ such that $\Tilde{B}, \Tilde{D} \subseteq I\delta_e(A \rtimes_\sigma N)$ and $\Tilde{B}\cdot A_h\delta_h \cdot \Tilde{D}=\{0\}$ for every $h \in H.$ 
\end{enumerate}    
\end{theorem}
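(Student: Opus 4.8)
The plan is to derive this statement as a specialization of Theorem~\ref{P2-nprimegroup} to the partial skew group ring $S:=A\rtimes_\sigma G$. By the group case of Proposition~\ref{P2-prop:SkewNearly}, $S$ is nearly epsilon-strongly $G$-graded with homogeneous components $S_g=A_g\delta_g$; since $G$ is a group we have $G_0=\{e\}$ and $A=A_e$, so $S_e=A\delta_e$ and the isomorphism $\psi$ of~\eqref{P2-map-psi} identifies $S_e$ with $A$. In particular, ideals of $S_e$ correspond bijectively to ideals of $A$, and $S_N=\oplus_{n\in N}A_n\delta_n=A\rtimes_\sigma N$ for any subgroup $N\subseteq G$. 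Applying the equivalence (a)$\Leftrightarrow$(c) of Theorem~\ref{P2-nprimegroup} to $S$, it then remains only to translate the graded data appearing in (c) into the partial-action data appearing in the statement to be proved.

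First I would handle the invariance and subring conditions, which are essentially bookkeeping. By (the $H$-analogue of) Proposition~\ref{P2-prop-corresp-inv-ideals}, an ideal $I$ of $S_e\cong A$ is $H$-invariant in the graded sense, i.e. $I^h\subseteq I$ for all $h\in H$, precisely when $\sigma_h(I\cap A_{h^{-1}})\subseteq I$ for all $h\in H$; and because $H$ is a \emph{subgroup}, applying this inclusion to $h^{-1}\in H$ and then $\sigma_h$ yields the reverse inclusion, so the condition self-improves to the equality $\sigma_h(IA_{h^{-1}})=IA_h$ for all $h\in H$ (using $s$-unitality to write $IA_h=I\cap A_h$). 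Next, $IS_N$ becomes $I\delta_e(A\rtimes_\sigma N)$, and since $S_H=\oplus_{h\in H}A_h\delta_h$, the single annihilation condition $\tilde A S_H\tilde B=\{0\}$ for ideals $\tilde A,\tilde B$ of $S_N$ is equivalent to $\tilde B\cdot A_h\delta_h\cdot\tilde D=\{0\}$ for every $h\in H$, after the harmless renaming $\tilde A\mapsto\tilde B$, $\tilde B\mapsto\tilde D$. The finiteness of $N$ and the normality $N\lhd H$ carry over verbatim.

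The main work --- and the step I expect to be the genuine obstacle --- is translating the ``off-$H$'' condition $I^gI=\{0\}$ $(g\in G\setminus H)$ into the two-sided form $IA_g\cdot\sigma_g(IA_{g^{-1}})=\{0\}$. Here I would reuse the computation in the proof of Proposition~\ref{P2-prop-corresp-inv-ideals}: since $\sigma_g(A_{g^{-1}})=A_g$ and $A_gIA_g=I\cap A_g$ by $s$-unitality, one obtains $I^g=\sigma_{g^{-1}}(I\cap A_g)\,\delta_e$. Multiplying by $\psi(I)=I\delta_e$ (and using $\sigma_e=\id$) gives $I^gI=\bigl(\sigma_{g^{-1}}(I\cap A_g)\cdot I\bigr)\delta_e$, and since the left factor lies in the $s$-unital ideal $A_{g^{-1}}$ this product equals $\sigma_{g^{-1}}(I\cap A_g)\cdot(I\cap A_{g^{-1}})$. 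Applying the ring isomorphism $\sigma_g\colon A_{g^{-1}}\to A_g$ --- which neither creates nor destroys the zero ideal --- turns this into $(I\cap A_g)\cdot\sigma_g(I\cap A_{g^{-1}})$, i.e. exactly $IA_g\cdot\sigma_g(IA_{g^{-1}})$. Hence $I^gI=\{0\}$ if and only if $IA_g\cdot\sigma_g(IA_{g^{-1}})=\{0\}$, which is the remaining bullet.

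With all three items translated, the graded characterization (c) of non-primeness of $S$ in Theorem~\ref{P2-nprimegroup} becomes verbatim the list (i)--(iii) in the statement, so $A\rtimes_\sigma G$ fails to be prime exactly under those conditions. The only points requiring care are the repeated silent use of $s$-unitality to pass between products and intersections of ideals (e.g. $IA_g=I\cap A_g$), and checking that $\sigma_g$ legitimately transports the vanishing of one product to the other; both are routine given Remark~\ref{P2:obs-partial-s-unital} and the standing hypothesis that each $A_g$ is $s$-unital.
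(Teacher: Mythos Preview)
The paper does not give its own proof of this statement: Theorem~\ref{P2:not-prime-13.7} is simply \emph{recalled} from \cite[Theorem~13.7]{Lannstrom2021} and stated without argument, just as Theorem~\ref{P2-nprimegroup} is recalled from \cite[Theorem~1.3]{Lannstrom2021}. So there is no in-paper proof to compare against.

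That said, your strategy of deducing the partial skew group ring statement from the general nearly epsilon-strongly graded statement (Theorem~\ref{P2-nprimegroup}) via Proposition~\ref{P2-prop:SkewNearly} and the dictionary in Proposition~\ref{P2-prop-corresp-inv-ideals} is exactly the intended reduction, and your translations are correct. In particular: (a) your self-improvement of $H$-invariance to the equality $\sigma_h(IA_{h^{-1}})=IA_h$ is valid because $H$ is a subgroup; (b) your identification $IS_N=I\delta_e(A\rtimes_\sigma N)$ and the equivalence of $\tilde A S_H\tilde B=\{0\}$ with the $h$-by-$h$ condition are immediate; and (c) the computation $I^gI=\sigma_{g^{-1}}(I\cap A_g)\cdot(I\cap A_{g^{-1}})\,\delta_e$ followed by transport along $\sigma_g$ is the right way to match the off-$H$ vanishing condition. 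The only places needing care --- and you flagged them --- are the uses of $s$-unitality to pass between $IA_g$ and $I\cap A_g$ and to absorb the factor through $A_{g^{-1}}$; these are justified by Remark~\ref{P2:obs-partial-s-unital} and the standing hypothesis that each $A_g$ is $s$-unital. Your argument is sound and is, in spirit, precisely how \cite{Lannstrom2021} obtains Theorem~13.7 from Theorem~1.3.
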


\begin{obs}
Note that, in Theorem~\ref{P2-teo:partial-skew-prime}, $A_e\rtimes_{\sigma^e} G_e^e$ is an $s$-unital partial skew group ring.
Thus, one can apply Theorem~\ref{P2:not-prime-13.7} to determine whether  $A_e\rtimes_{\sigma^e} G_e^e$ is prime.
\end{obs}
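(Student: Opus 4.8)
The plan is to verify the remark in three movements: first, that $\sigma^e = (A_h, \sigma_h)_{h \in G_e^e}$ is genuinely a partial action of the \emph{group} $G_e^e$ on the ring $A_e$ (so that $A_e \rtimes_{\sigma^e} G_e^e$ is a partial skew group ring); second, that this ring is $s$-unital; and third, that the hypotheses of Theorem~\ref{P2:not-prime-13.7} are then met verbatim. The key structural observation throughout is that a group is simply a groupoid with a single object, so the groupoid-level machinery of Section~\ref{P2-Applications} applies directly to $G_e^e$.

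For the first movement, I would confirm the four defining axioms of a partial action for $\sigma^e$. The isotropy group $G_e^e$ has identity $e$, and every pair of its elements is composable; hence the conditions in the definition of a partial action that are stated ``whenever $(g,h)\in G^2$'' become \emph{unconditional} after restriction. Concretely, for $h\in G_e^e$ one has $r(h)=s(h)=e$, so $A_h$ is an ideal of $A_{r(h)}=A_e$ and $\sigma_h\colon A_{h^{-1}}\to A_h$ is a ring isomorphism; axiom (ii) reads $\sigma_e=\id_{A_e}$, which is exactly the identity-element constraint for the group $G_e^e$; and the compatibility conditions (iii) and (iv) for $\sigma$ restrict verbatim to $h,k\in G_e^e$. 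Thus $\sigma^e$ is a partial action of $G_e^e$ on $A_e$, as already asserted in the preceding remark.

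For the second movement, I would apply Proposition~\ref{P2-prop:SkewNearly} to the group $G_e^e$ acting on $A_e$. Its two standing hypotheses hold: $A_h$ is $s$-unital for every $h\in G_e^e$ (this is assumed for every $g\in G$), and the completeness condition $A_e=\oplus_{f\in(G_e^e)_0}(A_e)_f$ reduces to $A_e=(A_e)_e=A_e$ since the object set $(G_e^e)_0$ is the singleton $\{e\}$. Proposition~\ref{P2-prop:SkewNearly} then yields that $A_e\rtimes_{\sigma^e}G_e^e$ is nearly epsilon-strongly $G_e^e$-graded, and Proposition~\ref{P2-prop:NearlyProperties}(iii), applied to this ring, gives that it is $s$-unital. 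For the third movement, since $G_e^e$ is a group and $A_e\rtimes_{\sigma^e}G_e^e$ is an $s$-unital partial skew group ring, the hypotheses of Theorem~\ref{P2:not-prime-13.7} are satisfied, so that theorem provides a checkable criterion for the primeness of $A_e\rtimes_{\sigma^e}G_e^e$.

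I do not anticipate a genuine obstacle, as the statement is essentially a bookkeeping observation. The only point requiring care is in the first movement: one must note that passing from the groupoid $G$ to an isotropy group $G_e^e$ converts the conditional composability hypotheses into unconditional ones, and that $A_e$ (the range/source object of every $h\in G_e^e$) plays the role of the ambient ring for the restricted action, so that $(A_e)_e=A_e$ matches the group-case requirement that the component at the identity be the whole ring. Once this identification is in place, both the applicability of Proposition~\ref{P2-prop:SkewNearly} and that of Theorem~\ref{P2:not-prime-13.7} follow without further work.
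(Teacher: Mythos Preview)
Your proposal is correct. The paper states this remark without proof, treating it as an immediate observation; your verification simply makes explicit the reasoning the paper leaves implicit, and it invokes precisely the tools the paper itself uses elsewhere (Proposition~\ref{P2-prop:SkewNearly} together with Proposition~\ref{P2-prop:NearlyProperties}(iii) for $s$-unitality, and the earlier remark that $\sigma^e$ is a partial action of $G_e^e$ on $A_e$).
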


\begin{definition}[{\cite[Rem.~3.4]{Bagio2021}}] \label{P2-def-group-type}
A partial action $\sigma = (A_g, \sigma_g)_{g\in G}$ of a connected groupoid $G$ on a ring $A$ is said to be of \emph{group-type} if there exist an element $e \in G_0$ and a family of morphisms $\{h_f\}_{f\in G_0}$ in $G$ such that $h_f:e \rightarrow f,$ $h_e=e,$ $A_{h_f^{-1}} = A_e$ and $A_{h_f}=A_f,$ for every $f \in G_0.$
\end{definition} 

\begin{obs}\label{P2:obs-group-type}
\begin{enumerate} [{\rm(a)}]
\item If a partial action $\sigma$ is of group-type, then every element of $G_0$ can take the role of $e$ in the above definition (see \cite[Rem.~3.4]{Bagio2021}).

\item By \cite[Lem.~3.1]{Bagio2021}, every global groupoid action is of group-type. The converse does not hold. 
For an example of a non-global partial action of group-type we refer the reader to \cite[Expl.~3.5]{Bagio2021}.
\end{enumerate}
\end{obs}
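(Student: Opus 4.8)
The plan is to treat the two parts of the remark separately, since they rest on different ideas. For part (a), I would start from a fixed base object $e \in G_0$ together with a family $\{h_f\}_{f \in G_0}$ witnessing that $\sigma$ is of group-type, and then show that an arbitrary $e' \in G_0$ can play the same role. The natural candidate for the new witnessing family is $h'_f := h_f h_{e'}^{-1}$ for each $f \in G_0$, obtained by rerouting every connecting morphism through the old base point $e$. First I would dispatch the purely groupoid-theoretic checks: since $s(h_f) = e = r(h_{e'}^{-1})$ the composite is defined, it satisfies $h'_f : e' \to f$, and $h'_{e'} = h_{e'} h_{e'}^{-1} = r(h_{e'}) = e'$, as required. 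The real content is the verification of the ideal conditions $A_{(h'_f)^{-1}} = A_{e'}$ and $A_{h'_f} = A_f$. For this I would invoke axiom (iii) of the partial action with $(g,h) = (h_f, h_{e'}^{-1})$, which yields $\sigma_{h_{e'}^{-1}}^{-1}(A_{h_f^{-1}} \cap A_{h_{e'}^{-1}}) \subseteq A_{(h'_f)^{-1}}$. The crucial observation is that the group-type hypothesis forces $A_{h_f^{-1}} = A_{h_{e'}^{-1}} = A_e$, so the intersection is all of $A_e$ and its image under $\sigma_{h_{e'}}$ is all of $A_{e'}$; combined with the trivial containment $A_{(h'_f)^{-1}} \subseteq A_{s(h'_f)} = A_{e'}$ this gives equality. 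Applying the same argument to the inverse $(h'_f)^{-1} = h_{e'} h_f^{-1}$ produces $A_{h'_f} = A_f$.

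For part (b), I would first recall that a \emph{global} action is the special case in which $A_g = A_{r(g)}$ for every $g \in G$, equivalently each $\sigma_g$ is an isomorphism of the full object ideals $A_{s(g)} \to A_{r(g)}$. Assuming $G$ connected (as required for the notion of group-type to apply), I would fix $e \in G_0$ and, using connectedness, choose for each $f \in G_0$ a morphism $h_f : e \to f$ with $h_e = e$. Globality then gives $A_{h_f^{-1}} = A_{s(h_f)} = A_e$ and $A_{h_f} = A_{r(h_f)} = A_f$ with no further computation, so the defining conditions of group-type hold automatically. For the failure of the converse I would simply point to the non-global group-type partial action in the cited example, noting that the group-type condition constrains only the connecting morphisms $h_f$ and leaves the remaining ideals $A_g$ free to be proper.

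The step I expect to be the main obstacle is the verification of the two ideal equalities in part (a): one must check carefully that the composite $h'_f$ inherits \emph{full} domain and range ideals, and this is precisely where the group-type normalization $A_{h_f^{-1}} = A_{h_{e'}^{-1}} = A_e$ is used to make the intersection appearing in axiom (iii) as large as possible. Everything else — the groupoid bookkeeping for composition and for source and range, and the entire argument for part (b) — is routine.
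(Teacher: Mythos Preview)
The paper does not give its own proof of this remark; both parts are stated with citations to \cite{Bagio2021} (Remark~3.4 and Lemma~3.1/Example~3.5, respectively) and no argument appears in the text. Your proposal therefore supplies a self-contained proof of what those references assert, rather than something to compare against an in-paper argument.

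That said, your proofs are correct. For part~(a), the rerouting $h'_f := h_f h_{e'}^{-1}$ is the natural construction, and your use of axiom~(iii) with $(g,h) = (h_f, h_{e'}^{-1})$ is exactly the right tool: the group-type hypothesis collapses the intersection $A_{h_f^{-1}} \cap A_{h_{e'}^{-1}}$ to all of $A_e$, and applying $\sigma_{h_{e'}}$ then gives $A_{e'} \subseteq A_{(h'_f)^{-1}}$, with the reverse inclusion coming from $A_{(h'_f)^{-1}} \subseteq A_{r((h'_f)^{-1})} = A_{e'}$. The symmetric argument for $A_{h'_f} = A_f$ is likewise correct. For part~(b), your observation that globality forces $A_{h_f^{-1}} = A_{s(h_f)} = A_e$ and $A_{h_f} = A_{r(h_f)} = A_f$ for \emph{any} choice of connecting morphisms is exactly the point, and the appeal to the cited example for the failure of the converse is all that is needed.
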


\begin{lema} \label{P2-lema:g.type-cond-support-hub} 
Let $\sigma = (A_g, \sigma_g)_{g\in G}$ be a  partial action of $G$ on $A$ such that $A_g$ is $s$-unital for every $g \in G$ and $A=\oplus_{e\in G_0} A_e.$ Consider the following statements:

\begin{enumerate} [{\rm(i)}]
    \item $\sigma$ is of group-type;
    \item There is some $e \in G_0'$ such that for every nonzero element $a_g\delta_g \in A\rtimes_{\sigma}G$ there is some $k \in G$ such that $s(k)=r(g),$ $r(k)=e$ and $a_g \in A_{k^{-1}};$
    \item There is some $e \in G_0'$ such that for every nonzero element $a_g\delta_g \in A\rtimes_{\sigma}G$ there is some $k \in G$ such that $s(k)=r(g),$ $r(k)=e$ and $A_{k^{-1}}a_g \neq \{0\};$
    \item There is some $e \in G_0'$ which is a support-hub. 
\end{enumerate}
Then, \rm{(i)} $\Rightarrow$ \rm{(ii)} $\Rightarrow$ \rm{(iii)} $\Leftrightarrow$ (iv).
\end{lema}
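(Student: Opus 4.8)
The plan is to prove the implications in the stated order, relying on the group-type structure to produce the ``hub'' element. For (i) $\Rightarrow$ (ii): assume $\sigma$ is of group-type, so fix $e\in G_0$ and morphisms $\{h_f\}_{f\in G_0}$ with $h_f\colon e\to f$, $A_{h_f^{-1}}=A_e$ and $A_{h_f}=A_f$. Note first that $e\in G_0'$: since $A_e=A_{h_f^{-1}}$ and connectedness of $G$ together with $A=\oplus_{f\in G_0}A_f$ forces every $A_f$ to be nonzero (if some $A_f=\{0\}$ then $A_{h_f}=\{0\}$, and applying $\sigma_{h_f}^{-1}$ gives $A_e=\{0\}$, hence $A=\{0\}$, contradicting that $A\rtimes_\sigma G$ is a ring we care about; alternatively invoke Remark~\ref{P2:obs-group-type}(a) which lets any object play the role of $e$). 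Then given a nonzero $a_g\delta_g$ with $g\in G$, so $a_g\in A_g=A_{r(g)}$ is nonzero, take $k:=h_{r(g)}^{-1}$, i.e. $k\colon r(g)\to e$. Then $s(k)=r(g)$, $r(k)=e$, and $A_{k^{-1}}=A_{h_{r(g)}}=A_{r(g)}\ni a_g$, which is exactly (ii).

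For (ii) $\Rightarrow$ (iii): this is immediate, since if $a_g\in A_{k^{-1}}$ and $a_g\neq 0$, then by $s$-unitality of $A_{k^{-1}}$ we have $a_g\in A_{k^{-1}}a_g$, so $A_{k^{-1}}a_g\neq\{0\}$. For (iii) $\Rightarrow$ (iv): let $e\in G_0'$ be as in (iii) and let $a_g\delta_g$ be nonzero. Apply (iii) to get $k\in G$ with $s(k)=r(g)$, $r(k)=e$ and $A_{k^{-1}}a_g\neq\{0\}$; this yields a nonzero product $A_k\delta_k\cdot a_g\delta_g = \sigma_k(\sigma_{k^{-1}}(A_k)a_g)\delta_{kg} = \sigma_k(A_{k^{-1}}a_g)\delta_{kg}$, which is nonzero because $\sigma_k$ is injective and $A_{k^{-1}}a_g\subseteq A_{k^{-1}}$. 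Applying (iii) instead to the nonzero element $a_{g^{-1}}'\delta_{g^{-1}}$ for a suitable nonzero $a_{g^{-1}}'\in A_{g^{-1}}$ chosen so that $a_g\delta_g\cdot(\text{something in }S_{g^{-1}})$ is nonzero (use Proposition~\ref{P2-equiv-nes} applied to the graded ring $A\rtimes_\sigma G$ to find a nonzero $\epsilon_g'(a_g\delta_g)\in S_{g^{-1}}S_g$, but more directly just pick $a_{g^{-1}}'$ with $a_g\cdot\sigma_g(\sigma_{g^{-1}}(a_g)\cdots)$ nonzero; the cleanest route is to note $(a_g\delta_g)(A_{g^{-1}}\delta_{g^{-1}})=A_g\delta_{r(g)}$ by the computation in Proposition~\ref{P2-prop:SkewNearly}, so there is a nonzero $c\delta_{g^{-1}}\in S_{g^{-1}}$ with $a_g\delta_g\cdot c\delta_{g^{-1}}\neq 0$), gives an element $h$ with $s(h)=e$, $r(h)=r(g)$ realizing $a_g\delta_g\cdot A_h\delta_h\neq\{0\}$ after composing. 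Hence $e$ is a support-hub.

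For (iv) $\Rightarrow$ (iii): suppose $e\in G_0'$ is a support-hub and let $a_g\delta_g$ be nonzero. By the definition of support-hub there is $k\in G$ with $r(k)=e$ and $A_k\delta_k\cdot a_g\delta_g\neq\{0\}$; since this product is $\sigma_k(\sigma_{k^{-1}}(A_k)a_g)\delta_{kg}$, nonvanishing forces the pair $(k,g)$ composable, so $s(k)=r(g)$, and forces $\sigma_{k^{-1}}(A_k)a_g = A_{k^{-1}}a_g\neq\{0\}$ (using $\sigma_{k^{-1}}(A_k)=A_{k^{-1}}$), which is precisely (iii). I expect the main obstacle to be the careful bookkeeping in (iii) $\Rightarrow$ (iv) on the ``left'' side, i.e. producing the morphism $h$ with $s(h)=e$: one must first replace $a_g\delta_g$ by a nonzero element supported on an object-indexed component (multiply on the right by a suitable homogeneous element of $S_{g^{-1}}$, as in the proof of Lemma~\ref{P2-lema-support-hub}), and then apply (iii) in the form that produces a morphism into $e$ and invert it; symmetry of the partial-action axioms makes this work but it requires stating explicitly which element (iii) is being applied to.
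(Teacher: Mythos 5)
Your arguments for (i) $\Rightarrow$ (ii), (ii) $\Rightarrow$ (iii) and (iv) $\Rightarrow$ (iii) are correct and essentially identical to the paper's. The genuine gap is in the second half of (iii) $\Rightarrow$ (iv). Besides $A_k\delta_k\,a_g\delta_g\neq\{0\}$ with $r(k)=e$ (which you obtain correctly), the support-hub condition requires a morphism $h$ with $s(h)=e$ and $a_g\delta_g\,A_h\delta_h\neq\{0\}$; composability forces $r(h)=s(g)$, not $r(h)=r(g)$ as you write, which already shows the bookkeeping has gone astray. More seriously, applying (iii) to a nonzero $a'_{g^{-1}}\delta_{g^{-1}}\in S_{g^{-1}}$ only produces $k'$ with $s(k')=s(g)$, $r(k')=e$ and $A_{k'^{-1}}a'_{g^{-1}}\neq\{0\}$, a \emph{left}-sided nonvanishing statement; what is needed for $a_g\delta_g\,A_{k'^{-1}}\delta_{k'^{-1}}\neq\{0\}$ is the \emph{right}-sided statement $\sigma_{g^{-1}}(a_g)A_{k'^{-1}}\neq\{0\}$. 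Since no commutativity is assumed, these do not coincide, and your sketch never bridges them (``after composing'' names no element); you yourself flag this as ``the main obstacle'' without resolving it. The route is salvageable, but only with an extra lemma you never state, e.g.\ that for an $s$-unital ideal $I$ of $A$ and $x\in A$ one has $Ix\neq\{0\}\Rightarrow xI\neq\{0\}$ (if $xI=\{0\}$ and $0\neq y\in Ix\subseteq I$, then $y\in yI\subseteq I(xI)=\{0\}$, a contradiction); also the claim $(a_g\delta_g)(A_{g^{-1}}\delta_{g^{-1}})=A_g\delta_{r(g)}$ is only an inclusion for a single $a_g$, though just nonvanishing is needed there.

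The paper avoids the left/right issue with a single explicit computation certifying both halves at once: with $k$ as in (iii), pick $d_{k^{-1}}\in A_{k^{-1}}$ with $d_{k^{-1}}a_g\neq 0$, let $u\in A_{g^{-1}}$ be an $s$-unit for $\sigma_{g^{-1}}(d_{k^{-1}}a_g)$ and $w\in A_{k^{-1}}$ an $s$-unit for $d_{k^{-1}}a_g$; then
\[
\bigl((\sigma_k(w)\delta_{k})(d_{k^{-1}}\delta_{s(k)})\bigr)(a_g\delta_g)\bigl((u\delta_{g^{-1}})(w\delta_{k^{-1}})\bigr)=\sigma_k(d_{k^{-1}}a_g)\delta_e\neq 0,
\]
and since this element lies in $A_k\delta_k\,(a_g\delta_g)\,A_{g^{-1}k^{-1}}\delta_{g^{-1}k^{-1}}$ with $r(k)=e$ and $s(g^{-1}k^{-1})=e$, both requirements in Definition~\ref{P2:def-support-hub} hold with $h:=g^{-1}k^{-1}$. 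You should either adopt this sandwich argument or state and prove the $s$-unital-ideal lemma above and take $h:=k'^{-1}$; as written, (iii) $\Rightarrow$ (iv) is not established.
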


\begin{proof}
(i)$\Rightarrow$(ii) 
Suppose that (i) holds.
Let  $e \in G_0'.$ Suppose that $g \in G$ and $a_g\delta_g \neq 0.$ Since $\sigma$ is of group-type, there is a morphism $h_{r(g)}:e \rightarrow r(g)$ such that $A_{h_{r(g)}^{-1}}=A_e$ and $A_{h_{r(g)}}=A_{r(g)}.$ Note that $a_g \in A_g \subseteq A_{r(g)}=A_{h_{r(g)}}.$ Define $k:=h_{r(g)}^{-1}$ and the proof is done.

(ii)$\Rightarrow$(iii) 
Suppose that (ii) holds.
Let $e \in G_0'$ and $a_g\delta_g \in A\rtimes_{\sigma}G.$ By assumption, there is some $k \in G$ such that $s(k)=r(g),$ $r(k)=e$ and $a_g \in A_{k^{-1}}.$ Since $A_{k^{-1}}$ is $s$-unital, we get $A_{k^{-1}}a_g \neq \{0\}.$

(iii)$\Rightarrow$(iv) 
Suppose that (iii) holds.
Let $e \in G_0'$ be as in (iii).
Suppose that $g \in G$ and $a_g\delta_g \neq 0.$ 
By assumption, there is some $k \in G$ such that $s(k)=r(g)$ and $r(k)=e$ and $A_{k^{-1}}a_g \neq \{0\}.$ Then, there is some $d_{k^{-1}} \in A_{k^{-1}}$ such that $0 \neq d_{k^{-1}}a_g \in A_{k^{-1}}\cap A_g.$ Let $u \in A_{g^{-1}}$ be an $s$-unit for $\sigma_{g^{-1}}(d_{k^{-1}}a_g)$ and let $w \in A_{k^{-1}}$ be an $s$-unit for $d_{k^{-1}}a_g.$ Note that 
$$b:=((\sigma_k(w)\delta_{k})(d_{k^{-1}}\delta_{s(k)}))(a_g\delta_g)((u\delta_{g^{-1}})(w\delta_{k^{-1}})) \in A_k\delta_k(a_g\delta_g)A_{g^{-1}k^{-1}}\delta_{g^{-1}k^{-1}}.$$ 
And,
$$\begin{aligned} b & =
(\sigma_k(w)\delta_{k})(d_{k^{-1}}a_g\delta_g)(u\delta_{g^{-1}})(w\delta_{k^{-1}})
=(\sigma_k(w)\delta_{k})(\sigma_g(\sigma_{g^{-1}}(d_{k^{-1}}a_g)u)\delta_{gg^{-1}})(w\delta_{k^{-1}})\\
& =(\sigma_k(w)\delta_{k})(d_{k^{-1}}a_g\delta_{r(g)})(w\delta_{k^{-1}})
=(\sigma_k(w)\delta_{k})(d_{k^{-1}}a_g\delta_{k^{-1}})
= \sigma_k(\sigma_{k^{-1}}(\sigma_k(w))d_{k^{-1}}a_g)\delta_{kk^{-1}} \\
&= \sigma_k(d_{k^{-1}}a_g)\delta_e \neq 0.
\end{aligned}$$
Define $h:=g^{-1}k^{-1}$ and note that $s(g^{-1}k^{-1})=e$. Moreover, note that $A_k \delta_k a_g \delta_g$ and $a_g \delta_g A_h \delta_h$ are both nonzero.

(iv)$\Rightarrow$(iii) 
Suppose that (iv) holds.
Let $e \in G_0'$ be as in (iv) and let $a_g\delta_g$, with $g \in G$, be a nonzero element. By assumption, there is some $k \in G$ such that $r(k)=e$ and $A_k\delta_ka_g\delta_g$ is nonzero. Note that
$A_k\delta_ka_g\delta_g= \sigma_k(A_{k^{-1}}a_g)\delta_{kg}.$ Therefore, $A_{k^{-1}}a_g \neq\{0\}.$
\end{proof}

\begin{obs}
In general, statements (i) and (ii) in Lemma~\ref{P2-lema:g.type-cond-support-hub} are stronger than statements (iii) and (iv). This is shown by the following example.
\end{obs}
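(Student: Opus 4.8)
The plan is to prove the remark by exhibiting one explicit partial action which satisfies (iii) (equivalently (iv)) but fails (ii); since Lemma~\ref{P2-lema:g.type-cond-support-hub} already gives $\text{(i)}\Rightarrow\text{(ii)}\Rightarrow\text{(iii)}\Leftrightarrow\text{(iv)}$, such an example automatically fails (i) as well, and hence separates (i),(ii) from (iii),(iv). A preliminary observation is that a \emph{group} cannot do the job: if $G$ has a single unit $e$, then for any nonzero $a_g\delta_g$ the morphism $k:=g^{-1}$ satisfies the source/range constraints in (ii) trivially and $a_g\in A_g=A_{(g^{-1})^{-1}}=A_{k^{-1}}$, so (ii) always holds. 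Thus the example must live over a groupoid with at least two units, and I would take the simplest one with trivial isotropy, $G=\{e_1,e_2,g,g^{-1}\}$ with $g\colon e_1\to e_2$.

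The guiding idea is that (ii) requires the \emph{membership} $a_\gamma\in A_{k^{-1}}$, whereas being a support-hub only requires the products $a_\gamma S_h$ and $S_k a_\gamma$ to be \emph{nonzero}. So I want $A_g$ and $A_{g^{-1}}$ to be \emph{proper} inside $A_{e_2}$, resp.\ $A_{e_1}$ — a homogeneous element of $S_{e_2}$ or $S_{e_1}$ lying outside those ideals then has no admissible $k$, killing (ii) — yet still \emph{faithful}, i.e.\ with zero left and right annihilator, so that products against them never vanish, keeping (iv). Concretely: let $F$ be a field, let $R:=F^{\mathbb{N}}$ be the ring of all sequences $\mathbb{N}\to F$ under componentwise operations, and let $I\subseteq R$ be the ideal of finitely-supported sequences; then $I$ is $s$-unital, $I\neq R$, and $\operatorname{ann}_R(I)=\{0\}$ on both sides since $I$ contains every coordinate idempotent. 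Now set $A:=R\times R$, $A_{e_1}:=R\times 0$, $A_{e_2}:=0\times R$, $A_g:=0\times I$, $A_{g^{-1}}:=I\times 0$, let each $\sigma_{e_i}$ be the identity, and let $\sigma_g\colon A_{g^{-1}}\to A_g$, $(x,0)\mapsto(0,x)$ (so $\sigma_{g^{-1}}=\sigma_g^{-1}$).

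From here there are three essentially routine tasks. \emph{First}, checking that $\sigma$ is a partial action with all $A_h$ $s$-unital and $A=A_{e_1}\oplus A_{e_2}$: since $\sigma_g,\sigma_{g^{-1}}$ are identity-type maps on $I$ and the only nontrivial compositions in $G$, namely $gg^{-1}=e_2$ and $g^{-1}g=e_1$, are units, the compatibility conditions (iii)--(iv) of the definition reduce to trivial inclusions of the form $I\subseteq R$ and $I\subseteq I$. \emph{Second}, showing $e_1$ (equivalently $e_2$) is a support-hub, which yields (iv) and hence (iii): the only homogeneous elements to test are $a\delta_{e_i}$, $a\delta_g$, $a\delta_{g^{-1}}$; for a nonzero $a\delta_{e_2}\in S_{e_2}$ the constraints force $h=g$ and $k=g^{-1}$, and $a\delta_{e_2}\,S_g\neq\{0\}\neq S_{g^{-1}}\,a\delta_{e_2}$ precisely because $I$ has zero left and right annihilator in $R$; for $a\delta_{e_1}\in S_{e_1}$ one has $h=k=e_1$ and nonvanishing is clear as $R$ is unital; for $a\delta_g$ and $a\delta_{g^{-1}}$ it follows from $s$-unitality of $I$ (together with $R$ being unital). \emph{Third}, showing (ii) fails: for $e=e_1$, pick any nonzero $a\in A_{e_2}$ not in $A_g$ (possible since $I\neq R$); then $a\delta_{e_2}$ is nonzero, the only morphism $k$ with $s(k)=r(e_2)=e_2$ and $r(k)=e_1$ is $k=g^{-1}$, yet $a\notin A_{(g^{-1})^{-1}}=A_g$; symmetrically, an element of $A_{e_1}\setminus A_{g^{-1}}$ rules out $e=e_2$. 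Hence (ii) fails, so (i) fails too.

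The main obstacle is producing the pair $(R,I)$: a proper $s$-unital ideal is already restrictive (it is in particular idempotent and ``locally unital''), and one additionally needs zero two-sided annihilator, which forbids writing $R$ as a direct product with $I$ a block factor — so a genuinely non-Noetherian ring is required, and $F^{\mathbb{N}}$ with its ideal of finitely-supported sequences is a convenient choice. Once $(R,I)$ is fixed, the remaining work is bookkeeping, the only mild care being to keep track of which single morphism $h$ or $k$ is forced by the source and range constraints in each support-hub check.
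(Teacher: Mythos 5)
Your proposal is correct: the example you construct is a genuine partial action (all $A_h$ are $s$-unital, the maps are ring isomorphisms, $A=A_{e_1}\oplus A_{e_2}$), your direct check that $e_1$ is a support-hub is valid because $I\subseteq F^{\mathbb{N}}$ is proper, $s$-unital and has zero left and right annihilator, and condition (ii) of Lemma~\ref{P2-lema:g.type-cond-support-hub} indeed fails at both units, so (i) fails as well; this separates (i),(ii) from (iii),(iv) exactly as the remark asserts. Your route differs from the paper's in two respects. First, the paper's Example~\ref{P2:ex-partial-not-group-type} uses the same two-object groupoid but takes $A=\mathbb{Z}\oplus\mathbb{Z}$ with $A_g=2\mathbb{Z}\oplus\{0\}$, $A_{g^{-1}}=\{0\}\oplus 2\mathbb{Z}$ and $\sigma_g(0,x)=(x-2,0)$, and it obtains the support-hub property indirectly: it proves $A$ is $G$-prime and each isotropy ring $A_e\rtimes_{\sigma^e}G_e^e\cong\mathbb{Z}$ is prime, concludes via Theorem~\ref{P2-teo:partial-skew-prime} that $A\rtimes_\sigma G$ is prime, and only then deduces that $e$ is a support-hub, before killing (ii) with the element $(0,3)\delta_f$; you instead verify the support-hub condition by hand on homogeneous elements, with no appeal to primeness. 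Second, the choice of coefficients buys different things: your construction respects the standing hypothesis that every $A_g$ be $s$-unital (note that $2\mathbb{Z}$ is not $s$-unital, and the map $(0,x)\mapsto(x-2,0)$ is not additive, so the paper's example as written does not literally satisfy the definitions it invokes), whereas the paper's example is designed to simultaneously exhibit a \emph{prime} partial skew groupoid ring arising from a non-group-type action, i.e.\ to show that the group-type hypothesis of Theorem~\ref{P2-prop:g-type-prime} is sufficient but not necessary; your ring $A\rtimes_\sigma G$ is not prime (since $A_{e_1}\rtimes_{\sigma^{e_1}}G_{e_1}^{e_1}\cong F^{\mathbb{N}}$ is not prime), which is harmless for the remark itself but forgoes that extra illustration. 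Finally, your aside that a non-Noetherian coefficient ring is ``required'' should be understood as specific to your template (a unital ring with a proper $s$-unital ideal of zero annihilator), not as a logical necessity for the remark.
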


\begin{example}\label{P2:ex-partial-not-group-type}
We will construct a partial action of a groupoid on a ring, which is not of group-type, and such that the corresponding partial skew groupoid ring is prime.
Let $G:=\{e,f,g,g^{-1}\}$ be a groupoid with $G_0=\{e,f\},$ $s(g)=f$ and $r(g)=e$ as follows:

\[
\xymatrixcolsep{4pc}
\xymatrix{ {\bullet}\ar@/^1.2pc/[r]^{g^{-1}} \ar@(dl,ul)^{e} &
{\bullet}\ar@/^1.2pc/[l]^g \ar@(dr,ur)_{f} }
\]
Let $A:= \mathbb{Z}\oplus\mathbb{Z}$, and define the partial action $\sigma:=(A_g,\sigma_g)_{g \in G}$ of $G$ on $A,$ by
\begin{itemize} 
    \item $A_e:=\mathbb{Z}\oplus \{0\}$ and
    $A_f:=\{0\} \oplus \mathbb{Z};$
    \item $A_g:=2\mathbb{Z}\oplus \{0\}$ and $A_{g^{-1}}:=\{0\}\oplus 2 \mathbb{Z};$
    \item $\sigma_e:=\id_{\mathbb{Z}\oplus \{0\}}$ and $\sigma_f:=\id_{\{0\}\oplus \mathbb{Z}};$
    \item $\sigma_g (0,x) := (x-2,0)$ and $\sigma_{g^{-1}}(x,0) := (0,x+2).$
\end{itemize}
Notice that  $A_e\rtimes_{\sigma^e} G_e^e
\cong \mathbb{Z}$ and $A_f\rtimes_{\sigma^f} G_f^f
\cong \mathbb{Z}$ are prime rings. Moreover, $G=G'$ and $G$ is connected.

We claim that $A$ is $G$-prime. Let $I, J$ be nonzero $G$-invariant ideals of $A.$ Then, there are nonzero elements $x=(x_1,x_2) \in I$ and $y=(y_1,y_2) \in J.$ 
Without loss of generality, we may assume that $x_2 \neq 0$.
If $y_2 \neq 0,$ then $0 \neq xy \in IJ.$ Otherwise, $y_2 = 0$ in which case $\sigma_{g^{-1}}(y)=\sigma_{g^{-1}}(y_1,0)=(0,y_1+2) \in J$
and
$\sigma_{g^{-1}}(2y)=\sigma_{g^{-1}}(2y_1,0)=(0,2y_1+2) \in J$.
Therefore, 
$0 \neq x\sigma_{g^{-1}}(y) \in IJ$
or
$0 \neq x\sigma_{g^{-1}}(2y) \in IJ.$
By Theorem~\ref{P2-teo:partial-skew-prime}, $A \rtimes_{\sigma} G$ is prime.
In particular,
by Theorem~\ref{P2-teo:partial-skew-prime}, $e$ is a support-hub.

We claim that $e \in G_0$ does not satisfy condition (ii) in Lemma~\ref{P2-lema:g.type-cond-support-hub}. Observe that $(0,3)\delta_f \in A_f\delta_f$ and that $g$ is the only element of $G$ such that $s(g)=f$ and $r(g)=e.$ However, $(0,3) \notin A_{g^{-1}}.$ A similar argument shows that $f \in G_0$ also does not satisfy statement (ii) in Lemma~\ref{P2-lema:g.type-cond-support-hub}. In particular, $\sigma$ is not of group-type. 
\end{example}

\begin{theorem}\label{P2-prop:g-type-prime} 
Let $\sigma = (A_g, \sigma_g)_{g\in G}$ be a  partial action of $G$ on $A$ such that $A_g$ is $s$-unital for every $g \in G,$  $A=\oplus_{e\in G_0} A_e$ and  $\sigma$ is of group-type. Then the partial skew groupoid ring $A\rtimes_{\sigma}G$ is prime if, and only if, there is some $e \in G_0'$ such that $A_e\rtimes_{\sigma^e} G_e^e$ is prime.
\end{theorem}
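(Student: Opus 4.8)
The statement is an ``if and only if''. The sufficiency direction is the interesting one and is exactly what Theorem~\ref{P2-teo:partial-skew-prime}~(vii)~$\Rightarrow$~(i) would give us, \emph{provided} we can produce a support-hub; so the whole point is to exploit the group-type hypothesis to manufacture one. The necessity direction is immediate from Theorem~\ref{P2-teo:partial-skew-prime}~(i)~$\Rightarrow$~(iii). So the work is: assume $\sigma$ is of group-type and that $A_e \rtimes_{\sigma^e} G_e^e$ is prime for some $e \in G_0'$, and deduce that $A \rtimes_\sigma G$ is prime.

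**Key steps.** First I would invoke Lemma~\ref{P2-lema:g.type-cond-support-hub}: since $\sigma$ is of group-type, statement~(i) holds, hence (ii), hence (iii), hence (iv) --- that is, there exists some $e_0 \in G_0'$ which is a support-hub. But I need the support-hub to be the \emph{same} index $e$ for which $A_e \rtimes_{\sigma^e} G_e^e$ is prime. Here I would use Remark~\ref{P2:obs-group-type}~(a): for a group-type partial action, \emph{every} element of $G_0$ can play the role of the distinguished object; re-running the chain (i)~$\Rightarrow$~(ii)~$\Rightarrow$~(iii)~$\Rightarrow$~(iv) in Lemma~\ref{P2-lema:g.type-cond-support-hub} with that chosen $e$ then shows $e$ itself is a support-hub. (Alternatively, and perhaps more cleanly: since $\sigma$ is of group-type, $G$ is connected, so $G' = G$ is connected by Proposition~\ref{P2:prop-connected}; combined with the isomorphism class of $A_f \rtimes_{\sigma^f} G_f^f$ being essentially independent of $f$ for a connected group-type action, primeness at one object forces it at all objects --- but the direct route via Remark~\ref{P2:obs-group-type}~(a) avoids needing that isomorphism explicitly.) Once $e$ is a support-hub and $A_e \rtimes_{\sigma^e} G_e^e$ is prime, condition~(vii) of Theorem~\ref{P2-teo:partial-skew-prime} is satisfied, and that theorem yields that $A \rtimes_\sigma G$ is prime. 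For the converse, assume $A \rtimes_\sigma G$ is prime; Theorem~\ref{P2-teo:partial-skew-prime}~(i)~$\Rightarrow$~(iii) gives that $A$ is $G$-prime and $A_e \rtimes_{\sigma^e} G_e^e$ is prime for some $e \in G_0'$, which in particular delivers the desired $e$.

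**Main obstacle.** The only genuine subtlety is the matching-index issue just described: Theorem~\ref{P2-teo:partial-skew-prime} gives the equivalence of ``for some $e \in G_0'$, $e$ is a support-hub and $A_e \rtimes_{\sigma^e} G_e^e$ is prime'' with primeness, whereas here we are handed primeness of $A_e \rtimes_{\sigma^e} G_e^e$ for \emph{some} $e$ with no a priori guarantee that that same $e$ is a support-hub. Resolving this cleanly requires the ``every object can serve as $e$'' property of group-type actions (Remark~\ref{P2:obs-group-type}~(a)), which ensures the support-hub condition transfers to whichever $e \in G_0'$ we happen to need. Everything else is a direct citation of Lemma~\ref{P2-lema:g.type-cond-support-hub} and Theorem~\ref{P2-teo:partial-skew-prime}.
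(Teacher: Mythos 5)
Your proposal is correct and follows essentially the same route as the paper, which proves this theorem by citing Lemma~\ref{P2-lema:g.type-cond-support-hub} and Theorem~\ref{P2-teo:partial-skew-prime}. The index-matching subtlety you isolate is resolved exactly as you suggest: the proof of (i)$\Rightarrow$(ii) in Lemma~\ref{P2-lema:g.type-cond-support-hub} starts from an \emph{arbitrary} $e\in G_0'$ (implicitly invoking Remark~\ref{P2:obs-group-type}~(a)), so under the group-type hypothesis every $e\in G_0'$ is a support-hub, and condition (vii) of Theorem~\ref{P2-teo:partial-skew-prime} applies to the same $e$ for which $A_e\rtimes_{\sigma^e}G_e^e$ is prime.
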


\begin{proof} 
It follows from Lemma~\ref{P2-lema:g.type-cond-support-hub} and Theorem~\ref{P2-teo:partial-skew-prime}.
\end{proof}

Now, we will make use of the example from \cite[Expl.~3.5]{Bagio2021} of a non-global partial action of a groupoid on a ring, % 
of group-type,
and apply Theorem~\ref{P2-prop:g-type-prime} to it. 

\begin{example}
Let $G=\{e,f,g,h,l,m,l^{-1},m^{-1}\}$ be the groupoid with $G_0=\{e,f\}$ and the following composition rules:
$$ g^2=e, \hspace{0.5cm} h^2= f, \hspace{0.5cm} lg=m=hl, \hspace{0.5cm} g \in G_e^e, \hspace{0.5cm} h \in G_f^f \hspace{0.4cm} \text{ and } \hspace{0.4cm} l,m: e \rightarrow f. $$ 
We present in the following diagram the structure of $G$:

\[
\xymatrixcolsep{4pc}
 \xymatrix{& e \ar[r]^{l}  &  f  \ar[d]^{h} \\
&e \ar[u]^{g} \ar[r]_{m} & f}
\]

Let $\mathbb{C}$ be the field of complex numbers and let $A=\mathbb{C}e_1 \oplus \mathbb{C} e_2 \oplus \mathbb{C} e_3 \oplus \mathbb{C} e_4,$ where $e_ie_j=\delta_{i,j}e_i$ and $e_1+\ldots+e_4=1.$ We define the partial action $(A_t,\sigma_t)_{t \in G}$ of $G$ on $A$ as follows:
$$\begin{aligned} & A_e = \mathbb{C}e_1 \oplus \mathbb{C} e_2 = A_{l^{-1}}, \hspace{0.6cm} A_f= \mathbb{C}e_3 \oplus \mathbb{C} e_4=A_l, \\ 
& A_g = \mathbb{C}e_1 =A_{g^{-1}}=A_{m^{-1}}, \hspace{0.6cm} A_m=A_h=\mathbb{C}e_3=A_{h^{-1}}, 
\end{aligned}$$
and
$$\begin{aligned} & \hspace{0.6cm} \sigma_e= \id_{A_e}, \hspace{0.6cm} \sigma_f= \id_{A_f}, \hspace{0.6cm} \sigma_g: a e_1 \mapsto \overline{a}e_1, \hspace{0.6cm} \sigma_h: a e_3 \mapsto \overline{a}e_3, \hspace{0.6cm} \sigma_m: a e_1 \mapsto \overline{a}e_3, \\
& \sigma_{m^{-1}}: a e_3 \mapsto \overline{a}e_1, \hspace{0.6cm} \sigma_l: a e_1 + b e_2 \mapsto ae_3+ b e_4, \hspace{0.6cm} \sigma_{l^{-1}}: a e_3 + b e_4 \mapsto ae_1 +b e_2,
\end{aligned}$$
where $\overline{a}$ denotes the complex conjugate of $a,$ for all $a \in \mathbb{C}.$ By choosing $h_e:=e$ and $h_f:=l,$ we notice that $\sigma$ is of group-type (cf. Definition~\ref{P2-def-group-type}). 

Now, we describe the group partial action $\sigma^e=(A_t,\sigma_t)_{t \in G_e^e}$ of $G_e^e$ on $A_e.$ Note that $$G_e^e=\{e,g\}, \hspace{0.6cm} \sigma_e= \id_{A_e} \hspace{0.5cm} \text{ and } \hspace{0.5cm} \sigma_g: a e_1 \mapsto \overline{a}e_1.$$
We claim that $A_e$ is not $G_e^e$-prime. Let $I:=\mathbb{C}e_1 \oplus \{0\}e_2$ and $J:=\{0\}e_1\oplus \mathbb{C}e_2.$ Note that $I$ and $J$ are nonzero $G_e^e$-invariant ideals of $A_e$ and $IJ=\{0\}.$ By Theorem~\ref{P2-teo:partial-skew-prime}, $A_e\rtimes_{\sigma^e}G_e^e$ is not prime. An analogous argument shows that $A_f\rtimes_{\sigma^f}G_f^f$ is not prime.
Hence, Theorem~\ref{P2-prop:g-type-prime} implies that $A\rtimes_{\sigma}G$ is not prime.
\end{example}

The following result generalizes \cite[Thm.~13.5]{Lannstrom2021} from the group setting.

\begin{theorem}\label{P2-teo-torsion-partial} 
Let $\sigma = (A_g, \sigma_g)_{g\in G}$ be a  partial action of $G$ on $A$ such that $A_g$ is $s$-unital for every $g \in G,$ and $A=\oplus_{e\in G_0} A_e.$ 
Furthermore, suppose that there is some $e \in G_0'$ such that $G_e^e$ is torsion-free. Then $A\rtimes_{\sigma}G$ is prime if, and only if, $A_e$ is $G_e^e$-prime and $A$ is $G$-prime.
\end{theorem}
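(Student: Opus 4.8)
The plan is to deduce Theorem~\ref{P2-teo-torsion-partial} directly from the already-established Theorem~\ref{P2-teo:partial-skew-prime}, exactly as Theorem~\ref{P2:nearly-torsion-free} is deduced from Theorem~\ref{P2-teo-nesgr}. The key observation is that the hypothesis supplies a distinguished object $e\in G_0'$ whose isotropy group $G_e^e$ is torsion-free, and that $A_e\rtimes_{\sigma^e}G_e^e$ is a partial skew group ring for the \emph{group} $G_e^e$; moreover it is $s$-unital (since $A_e$ is $s$-unital and each $A_h$, $h\in G_e^e$, is $s$-unital), so \cite[Theorem~1.4]{Lannstrom2021} applies to it.

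First I would invoke the equivalence (i)~$\Leftrightarrow$~(iii) of Theorem~\ref{P2-teo:partial-skew-prime}: $A\rtimes_\sigma G$ is prime if and only if $A$ is $G$-prime and, for some $f\in G_0'$, the partial skew group ring $A_f\rtimes_{\sigma^f}G_f^f$ is prime. Next, I would record the crucial point that the ``some $f\in G_0'$'' in statement (iii) may be replaced by the \emph{specific} object $e$ named in the hypothesis: this is because statements (ii) and (iii) of Theorem~\ref{P2-teo:partial-skew-prime} are equivalent, so if $A\rtimes_\sigma G$ is prime then $A_e\rtimes_{\sigma^e}G_e^e$ is prime (statement (ii), applied to our $e$), and conversely primeness of $A_e\rtimes_{\sigma^e}G_e^e$ together with $G$-primeness of $A$ gives primeness of $A\rtimes_\sigma G$ via (iii). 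Thus
\[
A\rtimes_\sigma G \text{ is prime} \iff A \text{ is } G\text{-prime and } A_e\rtimes_{\sigma^e}G_e^e \text{ is prime.}
\]

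Finally, I would apply \cite[Theorem~1.4]{Lannstrom2021} (the torsion-free case of the group result) to the $s$-unital partial skew group ring $A_e\rtimes_{\sigma^e}G_e^e$: since $G_e^e$ is torsion-free, $A_e\rtimes_{\sigma^e}G_e^e$ is prime if and only if $A_e$ is $G_e^e$-prime. Substituting this into the displayed equivalence yields exactly the claim: $A\rtimes_\sigma G$ is prime if and only if $A_e$ is $G_e^e$-prime and $A$ is $G$-prime.

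I do not anticipate a serious obstacle, since every ingredient is already in place; the only point that requires a moment's care is checking that \cite[Theorem~1.4]{Lannstrom2021} is genuinely applicable, i.e.\ that $A_e\rtimes_{\sigma^e}G_e^e$ is an $s$-unital partial skew group ring in the sense required there — this follows from $s$-unitality of $A_e$ and of each $A_h$ with $h\in G_e^e$, which are inherited from the standing hypotheses on $\sigma$, together with $A_e=\bigoplus_{f\in (G_e^e)_0}A_f$ holding trivially as $G_e^e$ is a group. One should also note explicitly that the notions of $G_e^e$-prime for $A_e$ appearing here and in \cite{Lannstrom2021} coincide, which is immediate from Definition~\ref{P2-def:partial-invariant} specialized to the group $G_e^e$.
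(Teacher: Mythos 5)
Your proposal is correct and takes essentially the same route as the paper, which likewise deduces the statement from Theorem~\ref{P2-teo:partial-skew-prime} combined with the torsion-free group result of L\"annstr\"om et al.; the only difference is that the paper cites \cite[Theorem~13.5]{Lannstrom2021} (stated directly for $s$-unital partial skew group rings, with the partial-action notion of $G$-primeness) whereas you invoke \cite[Theorem~1.4]{Lannstrom2021}, which forces the extra translation between graded and partial-action $G_e^e$-invariance. That translation is in any case available — it is Proposition~\ref{P2-G-prime-equiv} applied to the group $G_e^e$ (rather than being ``immediate from Definition~\ref{P2-def:partial-invariant}'' alone) — so your argument goes through.
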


\begin{proof}
It follows from Proposition~\ref{P2-prop:SkewNearly}, Proposition~\ref{P2-prop-corresp-inv-ideals} and Theorem~\ref{P2:nearly-torsion-free}.
\end{proof}

\subsection{Skew groupoid rings}

The partial action $\sigma =(A_g,\sigma_g)_{g \in G}$ of $G$ on $A$ is said to be \emph{global} if $A_g=A_{r(g)}$ for every $g\in G$.
In that case, the corresponding partial skew groupoid ring $A \rtimes_\sigma G$ is said to be a \emph{skew groupoid ring} (see e.g. \cite{OL2010,OL2012}).

\begin{obs}
Let $\sigma =(A_g,\sigma_g)_{g \in G}$ be a global action of $G$ on $A$.

(a) For $g\in G$, $\sigma_g : A_{s(g)} \to A_{r(g)}$ is a ring isomorphism.

(b) Note that $\sigma_g \circ \sigma_h = \sigma_{gh}$, whenever $(g,h)\in G^{(2)}$.

(c) The multiplication rule on the skew groupoid ring is induced by the following somewhat simplified rule compared to the partial case: 
\begin{displaymath}
(a \delta_g)(b \delta_h) :=
\left\{
\begin{array}{ll}
     a \sigma_g(b) \delta_{gh}, & \text{ if } (g,h) \in G^{(2)}\\
     0, & \text{ otherwise}
\end{array}\right.
\end{displaymath}
for $g,h\in G$ and $a \in A_{r(g)}$, $b\in A_{r(h)}$.
\end{obs}

\begin{theorem}\label{P2:teo-global-connected} 
 Let $\sigma = (A_g, \sigma_g)_{g\in G}$ be a global action of $G$ on $A$ such that $A_e$ is $s$-unital for every $e\in G_0$,
 and let $A=\oplus_{e\in G_0} A_e$. 
Suppose that the groupoid $G$ is connected.
Then, the skew groupoid ring $A\rtimes_{\sigma}G$ is prime if, and only if, there is some $e \in G_0'$ such that $A_e\rtimes_{\sigma^e} G_e^e$ is prime.
\end{theorem}

\begin{proof}
It follows from Remark~\ref{P2:obs-group-type} (b) and Theorem~\ref{P2-prop:g-type-prime}.
\end{proof}

\begin{proposition}\label{P2-prop:support-hub-G'connected}
Let $\sigma = (A_g, \sigma_g)_{g\in G}$ be a global action of $G$ on $A$ such that $A_e$ is $s$-unital for every $e\in G_0$,
 and let $A=\oplus_{e\in G_0} A_e$.
 The following statements are equivalent:
\begin{enumerate} [{\rm(i)}]
    \item $G'$ is connected;
    \item For every $e \in G_0',$ $e$ is a support-hub;
    \item For some $e \in G_0',$ $e$ is a support-hub.
\end{enumerate}
\end{proposition}

\begin{proof} 
Obviously, (ii)$\Rightarrow$(iii). 

(iii)$\Rightarrow$(i) follows from Proposition~\ref{P2:prop-connected} (ii).

(i)$\Rightarrow$(ii) 
Suppose that $G'$ is connected.
Take $e \in G_0'$, $g\in G$,  and let $a_g\delta_g \in A\rtimes_{\sigma}G$ be a nonzero element. Then $g \in G'$ and, by assumption, there is some $k \in G'$ such that $s(k)=r(g)$ and $r(k)=e.$ Notice that $a_g \in A_{r(g)}=A_{s(k)}=A_{k^{-1}}.$ By Lemma~\ref{P2-lema:g.type-cond-support-hub} (ii) $\Rightarrow$ (iv), $e$ is a support-hub.
\end{proof}

Below, we summarize our findings for skew groupoid ring.

\begin{theorem}\label{P2-teoskew}
Let $\sigma = (A_g, \sigma_g)_{g\in G}$ be a global action of $G$ on $A$ such that $A_e$ is $s$-unital for every $e\in G_0$,
 and let $A=\oplus_{e\in G_0} A_e$.
Then, the following statements are equivalent:
\begin{enumerate}[{\rm (i)}]
    \item The skew groupoid ring $A\rtimes_\sigma G$ is prime;
    \item $A$ is $G$-prime, and for every $e \in G_0',$ $A_e\rtimes_{\sigma^e} G_e^e$ is prime; 
    \item $A$ is $G$-prime, and for some $e \in G_0',$ $A_e\rtimes_{\sigma^e} G_e^e$ is prime; 
    \item $A\rtimes_\sigma G$ is graded prime, and for every $e \in G_0',$ $A_e\rtimes_{\sigma^e} G_e^e$ is prime; 
    \item $A\rtimes_\sigma G$ is graded prime, and for some $e \in G_0',$ $A_e\rtimes_{\sigma^e} G_e^e$ is prime;
    \item $G'$ is connected, and for every $e \in G_0',$ $A_e\rtimes_{\sigma^e} G_e^e$ is prime;
    \item $G'$ is connected, and for some $e \in G_0'$,  $A_e\rtimes_{\sigma^e} G_e^e$ is prime.
\end{enumerate}
\end{theorem}

\begin{proof}
It follows from Theorem~\ref{P2-teo:partial-skew-prime} and Proposition~\ref{P2-prop:support-hub-G'connected}.
\end{proof}

The following example shows that Theorem~\ref{P2-teoskew} does not generalize to partial skew groupoid rings.

\begin{example}\label{P2:ex-partial-connected-not-prime}
Let $G=\{e,f,g,g^{-1}\}$ be a groupoid such that $G_0=\{e,f\},$ $s(g)=f$ and $r(g)=e$ as follows:
\[
\xymatrixcolsep{4pc}
\xymatrix{ {\bullet}\ar@/^1.2pc/[r]^{g^{-1}} \ar@(dl,ul)^{e} &
{\bullet}\ar@/^1.2pc/[l]^g \ar@(dr,ur)_{f} }
\]

Let $A:= \mathbb{Z}\oplus\mathbb{Z}.$ Now, we define a partial action $\sigma:=(A_g,\sigma_g)_{g \in G}$ of $G$ on $A$:
\begin{itemize} 
    \item $A_e=\mathbb{Z}\oplus \{0\}$ and
    $A_f=\{0\} \oplus \mathbb{Z};$
    \item $A_g=A_{g^{-1}}=\{0\};$
    \item $\sigma_e=\id_{\mathbb{Z}\oplus \{0\}},$ $\sigma_f=\id_{\{0\}\oplus \mathbb{Z}}$ and $\sigma_g=\sigma_{g^{-1}}=\id_{\{0\}}.$
\end{itemize}
Notice that $A_e\rtimes_{\sigma^e} G_e^e 
\cong \mathbb{Z}$ and $A_f\rtimes_{\sigma^f} G_f^f
\cong \mathbb{Z}$ are prime rings. Observe that $G=G'$, and that $G$ is connected. However, there are no $t \in G$ and $c_t\delta_t \in A\rtimes_\sigma G$ such that  $\delta_e(c_t\delta_t)\delta_f\neq 0$. Hence, by Lemma~\ref{P2-lemmagraded1}, $A\rtimes_\sigma G$ is not graded prime. 
\end{example}

\begin{corolario}\label{P2:global-torsion-free} 
Let $\sigma = (A_g, \sigma_g)_{g\in G}$ be a global action of $G$ on $A$ such that $A_e$ is $s$-unital for every $e\in G_0$,
 and let $A=\oplus_{e\in G_0} A_e$.
Suppose that there is some $e \in G_0'$ such that $G_e^e$ is torsion-free. Then, the skew groupoid ring $A\rtimes_{\sigma}G$ is prime if, and only if, $A_e$ is $G_e^e$-prime and $G'$ is connected.
\end{corolario}

\begin{proof}
It follows from Theorem~\ref{P2-teoskew} and \cite[Thm.~13.5]{Lannstrom2021}.
\end{proof}

\subsection{Groupoid rings}

Let $R$ be an $s$-unital ring and let $G$ be a groupoid. The groupoid ring $R[G]$ consists of elements of the form $\sum_{g\in G} a_g \delta_g$ where $a_g\in R$ is zero for all but finitely many $g\in G$. 
For $g,h\in G$ and $a,b\in R$,
the multiplication in $R[G]$ is defined by the relation
$a \delta_g \cdot b \delta_h := ab \delta_{gh}$,
if $g,h$ are composable, and $a \delta_g \cdot b \delta_h :=0$ otherwise.

\begin{obs}\label{P2-obs:GroupoidRingAsGlobalCrossedProduct}
Let $R$ be an $s$-unital ring and let $G$ be a groupoid. 
Consider the global action $\sigma=(A_g,\sigma_g)_{g\in G}$ of $G$ on $A$, defined by letting
$A_g:=R$ and $\sigma_g := \id_R$ for every $g\in G$, and $A:=\oplus_{e\in G_0} A_e=\oplus_{e\in G_0} R$.
Notice that the corresponding skew groupoid ring $A \rtimes_\sigma G$ is isomorphic to the groupoid ring $R[G]$.
\end{obs}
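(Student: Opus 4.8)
The plan is simply to check that the obvious ``identity'' assignment is a ring isomorphism. Since $A_g = R$ for every $g\in G$, and in particular $A_{r(g)} = R$, an element of the skew groupoid ring $A\rtimes_\sigma G$ is, by definition, a finitely supported formal sum $\sum_{g\in G} a_g\delta_g$ with every coefficient $a_g$ lying in $R$; this is exactly the shape of a general element of $R[G]$. So I would define $\Phi \colon A\rtimes_\sigma G \to R[G]$ by $\Phi\bigl(\sum_{g\in G} a_g\delta_g\bigr) := \sum_{g\in G} a_g\delta_g$, and observe that it is a well-defined additive bijection, its inverse being given by the same formula read backwards.

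The only thing that requires a (short) computation is multiplicativity, and by additivity it suffices to verify it on homogeneous elements $a\delta_g$ and $b\delta_h$ with $a,b\in R$. If $g$ and $h$ are not composable, then both $(a\delta_g)(b\delta_h)$ in $A\rtimes_\sigma G$ and in $R[G]$ vanish, so there is nothing to do. If $(g,h)\in G^{(2)}$, then in $A\rtimes_\sigma G$ one has $(a\delta_g)(b\delta_h) = a\,\sigma_g(b)\,\delta_{gh}$, and since $\sigma_g = \id_R$ this equals $ab\,\delta_{gh}$, which is precisely the product $(a\delta_g)(b\delta_h)$ computed in $R[G]$. Hence $\Phi\bigl((a\delta_g)(b\delta_h)\bigr) = \Phi(a\delta_g)\,\Phi(b\delta_h)$, and extending bilinearly yields $\Phi(xy) = \Phi(x)\Phi(y)$ for all $x,y \in A\rtimes_\sigma G$. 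Thus $\Phi$ is a ring isomorphism.

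I do not anticipate any genuine obstacle: the statement is pure bookkeeping, the one substantive point being the observation that trivializing all the structure isomorphisms $\sigma_g$ collapses the twisted multiplication of $A\rtimes_\sigma G$ to the plain convolution product of the groupoid ring. For completeness I would also recall that, under the standing assumptions of this section ($A = \oplus_{e\in G_0} A_e$ with each $A_e$, hence each $A_g = R$, an $s$-unital ring), $A\rtimes_\sigma G$ is indeed an associative ring, so that the statement is not vacuous; $R[G]$ is associative because $G$ is a category. Note that neither ring need be unital, which causes no difficulty for $\Phi$.
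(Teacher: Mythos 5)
Your proposal is correct and follows the only natural route: the paper states this as an unproved remark, and the intended verification is exactly your identity assignment $\sum a_g\delta_g \mapsto \sum a_g\delta_g$, with the single substantive observation that $\sigma_g=\id_R$ collapses the twisted product $a\,\sigma_g(b)\,\delta_{gh}$ to $ab\,\delta_{gh}$. Your treatment of the non-composable case and the associativity/$s$-unitality remarks are consistent with the paper's standing assumptions, so nothing further is needed.
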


A subset $X \subseteq G_0$ is said to be \emph{$R$-dense} if for every nonzero $a \in R[G]$ there is some $g \in \Supp(a)$ such that $s(g) \in X.$ For each $e \in G_0,$ define $\mathcal{O}_e:=\{f \in G_0 : \exists g \in G, s(g)=e, r(g) = f \}$.

\begin{proposition}\label{P2-PropAdense}
Let $G$ be a groupoid and let $R$ be a nonzero $s$-unital ring. The following statements are equivalent:
\begin{enumerate} [{\rm(i)}]
\item For every $e \in G_0,$  $\mathcal{O}_e$ is $R$-dense;
\item There is some $e \in G_0$ such that $\mathcal{O}_e$ is $R$-dense;
\item $G$ is connected.
\end{enumerate}    
\end{proposition}

\begin{proof}
(i)$\Rightarrow$(ii) The proof is immediate. 

(ii)$\Rightarrow$(iii) 
Suppose that there is some $e \in G_0$ such that $\mathcal{O}_e$ is $R$-dense. Let $f,h \in G_0$ and let $r\in R$ be nonzero. Clearly, $r\delta_f, r\delta_h \in R[G],$ and hence, by assumption, $f,h \in \mathcal{O}_e.$ 
By the definition of $\mathcal{O}_e$, we may find some $g\in G$ such that $s(g)=f$ and $r(g)=h.$

(iii)$\Rightarrow$(i) Fix some $e \in G_0$ and suppose that $G$ is connected.
Clearly, $\mathcal{O}_e=G_0$ which is $R$-dense. 
\end{proof}

The following theorem generalizes \cite[Thm.~12.4]{Lannstrom2021}. 

\begin{theorem}\label{P2-teogr}
Let $G$ be a groupoid and let $R$ be a nonzero $s$-unital ring. The following statements are equivalent:
\begin{enumerate} [{\rm(i)}]
    \item The groupoid ring $R[G]$ is prime;
    \item $G$ is connected and there is some $e \in G_0$ such that the group ring $R[G_e^e]$ is prime;
    \item $G$ is connected and, for every $e \in G_0,$ the group ring $R[G_e^e]$ is prime;
    \item There is some $e \in G_0$ such that $\mathcal{O}_e$ is $R$-dense and $R[G_e^e]$ is prime;
    \item For every $e \in G_0,$  $\mathcal{O}_e$ is $R$-dense and $R[G_e^e]$ is prime;
    \item $G$ is connected, $R$ is prime and there is some $e \in G_0$ such that $G_e^e$ has no non-trivial finite normal subgroup;
    \item $G$ is connected, $R$ is prime and, for every $e \in G_0,$ $G_e^e$ has no non-trivial finite normal subgroup;
    \item $R$ is prime, and there is some $e \in G_0$ such that $\mathcal{O}_e$ is $R$-dense, and $G_e^e$ has no non-trivial finite normal
    subgroup;
    \item $R$ is prime, and for every $e \in G_0,$ $\mathcal{O}_e$ is $R$-dense, and $G_e^e$ has no non-trivial finite normal
    subgroup.
\end{enumerate}
\end{theorem}

\begin{proof} 
Notice that $G'=G.$ The proof follows from Remark~\ref{P2-obs:GroupoidRingAsGlobalCrossedProduct}, Theorem~\ref{P2-teoskew}~(i), (vi) and (vii), Proposition~\ref{P2-PropAdense} and \cite[Thm.~12.4]{Lannstrom2021}. 
\end{proof}

\begin{obs} (a) It is known that, in the case where $R$ is a commutative unital ring, the groupoid ring $R[G]$ is an example of a Steinberg algebra
(see \cite[Rem.~4.10]{Steinberg2010}). Hence, in that special case, the equivalence between (i) and (iv) in Theorem~\ref{P2-teogr} can be obtained using Steinberg's results from \cite[Prop.~4.3]{Steinberg2019}, \cite[Prop.~4.4]{Steinberg2019} and \cite[Thm.~4.9]{Steinberg2019}.

(b) In the case where $R$ is unital, after suitable translations of the properties involved, it is possible to obtain e.g. the implication (ii)$\Rightarrow$(i) in Theorem~\ref{P2-teogr} from \cite[Thm.~3.2]{Munn1990}. 
\end{obs}

\end{document}